\newcommand{\DD}{\bm{D}}
\newcommand{\C}{\mathcal{C}}
\newcommand{\CC}{\mathbb{C}}
\newcommand{\NN}{\mathbb{N}}
\newcommand{\PPP}{\mathcal{P}}
\newcommand{\Bex}{B^{\text{ex},N}}
\newcommand{\Bad}{B^{\text{ad},N}}
\newcommand{\Bc}{B^{\text{c},N}}
\newcommand{\ii}{\mathbf{i}}
\renewcommand{\ss}{\mathbf{s}}
\newcommand{\esper}{\mathbb{E}}
\newcommand{\PP}{\widetilde{\sigma}_{\ii,\ss}}
\DeclareMathOperator{\Conn}{CC}
\DeclareMathOperator{\Prob}{Prob}
\DeclareMathOperator{\Var}{Var}
\DeclareMathOperator{\Cov}{Cov}
\DeclareMathOperator{\rk}{rk}
\newcommand{\ev}{\text{ev}}
\newcommand{\odd}{\text{odd}}
\renewcommand{\forall}{\text{for all }}
\newcommand{\NNN}{\mathcal{N}}
\DeclareMathOperator{\Id}{Id}
\begin{document}

%%%%%%%%%%%%%%%%%%%%%%%%%%%%%%%%%%%%%%%%%%%%%%%%%%%%%%%%%%%%%%%%%%%
%%                                                               %%
%% No need for \maketitle.                                       %%
%%                                                               %%
%%%%%%%%%%%%%%%%%%%%%%%%%%%%%%%%%%%%%%%%%%%%%%%%%%%%%%%%%%%%%%%%%%%

%%%%%%%%%%%%%%%%%%%%%%%%%%%%%%%%%%%%%%%%%%%%%%%%%%%%%%%%%%%%%%%%%%%
%%                                                               %%
%% Please replace what follows by the body of your article       %%
%% (up to the bibliography):                                     %%
%%                                                               %%
%%%%%%%%%%%%%%%%%%%%%%%%%%%%%%%%%%%%%%%%%%%%%%%%%%%%%%%%%%%%%%%%%%%

\section{Introduction}
\subsection{Background}
Permutations are one of the most classical objects in enumerative combinatorics.
Several statistics have been widely studied:
total number of cycles, number of cycles of a given length,
of descents, inversions, exceedances
or more recently, of occurrences of a given
(generalized) pattern... 
A classical question in enumerative combinatorics consists in computing the
(multivariate) generating series of permutations with respect to some of
these statistics.
%It would be impossible to do an exhaustive list of the different statistics
%which have been considered and the results which have been obtained.

A probabilistic point of view on the topic raises other questions.
Let us consider, for each $N$, a probability measure $\mu_N$ on permutations
of size $N$.
The simplest model of random permutations is of course the uniform
random permutations (for each $N$, $\mu_N$ is the uniform distribution on 
the symmetric group $S_N$).
A generalization of this model has been introduced by W.J. Ewens in the context of
population dynamics \cite{EwensDefinitionMeasure}.
It is defined by
\begin{equation}\label{DefEwens}
\mu_N(\{\sigma\}) = \frac{\theta^{\#(\sigma)}}
{\theta (\theta +1) \cdots (\theta + N-1)},
\end{equation}
where $\theta > 0$ is a fixed real parameter and $\#(\sigma)$ stands for the
number of cycles of the permutation $\sigma$.
Of course, when $\theta=1$, we recover the uniform distribution.
From now on, we will allow ourselves a small abuse of language and 
use the expression {\em Ewens random permutation} for a random permutation
distributed with Ewens measure.

Having chosen a sequence of probability disribution of $S_N$,
any statistic on permutations can be interpreted as a sequence of random variables 
$(X_N)_{N \geq 1}$.
The natural question is now: what is the asymptotic behavior 
(possibly after normalization) of $(X_N)_{N \geq 1}$?
The purpose of this article is to introduce a new general approach to
this family of problems, based on the method of moments.

We then use it to determine the second-order fluctuations of a large family
of statistics on permutations: occurrences of dashed patterns
(Theorem \ref{ThmDashedPatterns}).

Random permutations, either with uniform or Ewens distribution,
are well-studied objects.
Giving a complete list of references is impossible.
In Section \ref{SubsectComparaisonLitterature},
we compare our results with the literature.

\subsection{Motivating examples}\label{SubsectMotivations}
Let us begin by describing a few examples of results, covered by our method.
%In these examples, the statistics we consider
%are sums of (generally dependent) indicators, that nonetheless, have a sufficiently
%large degree of independence, so as to behave asymptotically as independent.
%Our method is based on this idea.
\bigskip

{\em Number of cycles of a given length $p$.}
Let $\Gamma^{(N)}_p$ be the random variable given by the number of cycles of
length $p$ in an Ewens random permutation $\sigma$ in $S_N$.
The asymptotic distribution of $\Gamma^{(N)}_p$ has been studied by V.L. Goncharov
\cite{GoncharovCycles} and V.F. Kolchin \cite{KolchinCyclesRandomPerm}
in the case of uniform measure and by G.A. Watterson
\cite[Theorem 5]{WattersonEwensMeasure}
in the framework of a general Ewens distribution
(see also \cite[Theorem 5.1]{LogarithmicCombinatorialStructures}).
\begin{theorem}[\cite{WattersonEwensMeasure}]
    Let $p$ be a positive integer.
    When $N$ tends to infinity, $\Gamma^{(N)}_p$ converges in distribution
    towards a Poisson law with parameter $\theta/{p}$.
    Moreover, the sequences of random variables
    $(\Gamma^{(N)}_{p'})_{N \geq 1}$ for $p'\leq p$
    are asymptotically independent.
    \label{ThmCycles}
\end{theorem}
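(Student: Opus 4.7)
The plan is to apply the method of (factorial) moments. Since Poisson laws are determined by their moments (Carleman's condition is easily checked) and since the joint factorial moments of independent Poisson variables $Z_{p'}\sim\text{Poisson}(\theta/p')$ factorise as $\esper[\prod_{p'} (Z_{p'})_{k_{p'}}] = \prod_{p'} (\theta/p')^{k_{p'}}$, it suffices to prove
\begin{equation*}
\esper\biggl[\prod_{p'=1}^{p} (\Gamma^{(N)}_{p'})_{k_{p'}}\biggr]
\;\xrightarrow[N\to\infty]{}\; \prod_{p'=1}^{p} \biggl(\frac{\theta}{p'}\biggr)^{k_{p'}}
\end{equation*}
for every choice of non-negative integers $k_1,\ldots,k_p$, where $(x)_k = x(x-1)\cdots(x-k+1)$ denotes the falling factorial. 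Convergence in distribution and asymptotic independence would then both follow in one stroke.

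The key observation is combinatorial: $(\Gamma^{(N)}_{p'})_{k_{p'}}$ counts ordered $k_{p'}$-tuples of distinct $p'$-cycles of $\sigma$, so the product counts families $\mathcal{C} = (C^{p'}_i)_{p',i}$ of pairwise disjoint cycles of the prescribed lengths, all of which are cycles of $\sigma$. Writing $K = \sum_{p'} p' k_{p'}$ for the total number of elements involved and $k = \sum_{p'} k_{p'}$ for the total number of cycles, a standard count (choose the $K$ underlying elements, arrange them into ordered blocks of the correct sizes, and pick a cyclic arrangement on each block) gives
\begin{equation*}
\#\{\text{admissible families } \mathcal{C}\} = \frac{N!}{(N-K)!\,\prod_{p'} (p')^{k_{p'}}}.
\end{equation*}
For each fixed admissible $\mathcal{C}$, the Ewens weight~\eqref{DefEwens} factors cleanly: summing $\mu_N(\sigma)$ over all $\sigma$ having $\mathcal{C}$ among its cycles amounts to choosing an arbitrary permutation $\tau$ of the $N-K$ remaining elements, and the identity $\sum_{\tau \in S_m} \theta^{\#(\tau)} = \theta(\theta+1)\cdots(\theta+m-1)$ yields
\begin{equation*}
\Prob[\mathcal{C}\text{ is a subfamily of cycles of } \sigma] = \frac{\theta^k \cdot \theta(\theta+1)\cdots(\theta+N-K-1)}{\theta(\theta+1)\cdots(\theta+N-1)},
\end{equation*}
which depends only on $k$ and $K$. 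This cleanness is exactly the manifestation of the ``almost independent'' heuristic for the Ewens measure.

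Combining the two ingredients gives an exact formula for the joint factorial moment, and the two products of $K$ linear factors in $N$ (namely $N!/(N-K)!$ and the denominator of the probability above) cancel to leading order, leaving the desired limit $\prod_{p'}(\theta/p')^{k_{p'}}$. I do not foresee any serious obstacle: the Ewens summation and the combinatorial count are both exact identities, and the asymptotic step is elementary. The only point requiring a little care is keeping the factors of $p'$, $(p'-1)!$ and $(p'!)$ straight when counting ordered tuples of disjoint cycles, and ensuring that the passage from factorial moments to moments (hence to the Poisson distribution) is justified by moment determinacy.
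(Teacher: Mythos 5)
Your proof is correct, but it takes a genuinely different route from the paper. You compute the joint factorial moments exactly: the count of admissible ordered families of disjoint cycles is $N!/\bigl((N-K)!\prod_{p'}(p')^{k_{p'}}\bigr)$, the Ewens probability that a fixed such family sits inside $\sigma$ is $\theta^{k}/\bigl((N+\theta-1)(N+\theta-2)\cdots(N+\theta-K)\bigr)$ by the identity $\sum_{\tau\in S_m}\theta^{\#(\tau)}=\theta(\theta+1)\cdots(\theta+m-1)$, and the product converges to $\prod_{p'}(\theta/p')^{k_{p'}}$; since a vector of independent Poisson variables is moment-determinate, this gives both the Poisson limit and asymptotic independence in one stroke. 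This is essentially the classical moment-method argument of Watterson (and Kaplansky), which the paper explicitly acknowledges as the original route. The paper instead deliberately re-derives the result as a "toy example" of its cumulant machinery: it writes $\Gamma_p^{(N)}$ as a sum of products of elementary events $B^{(N)}_{i,s}$, bounds each joint cumulant via the main lemma (Theorem \ref{ThMain}), controls the number of index lists with Lemma \ref{LemNumberLists}, and identifies the surviving diagonal terms to get $\kappa_\ell(\Gamma_p^{(N)})\to\theta/p$ for every $\ell$. Your approach is shorter and exact but specific to this statistic; the paper's approach is heavier here but is the same template that later yields the Gaussian limits for exceedances and dashed patterns, which is the point of the article. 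One minor caveat in your write-up: you should state explicitly that disjointness of distinct cycles of a permutation is what makes $\prod_{p'}(\Gamma^{(N)}_{p'})_{k_{p'}}$ equal to the number of admissible families, and that convergence of factorial moments implies convergence of ordinary moments (via the Stirling-number change of basis) before invoking moment determinacy; both are routine.
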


%Let us give an intuitive (but false)
%explanation of the first part of the result,
%assuming that some non-independent variables are independent.
%
%If $i_1,\dots,i_p$ is a list of distinct integers between $1$ and $N$
%such that its minimum is $i_1$ (there are $(N)_p/p$ such lists,
%where $(N)_p$ is the usual notation for the falling factorial
%$(N)_p=N(N-1)\dots(N-p+1)$),
%we define
%\begin{equation}\label{EqDefBc} \Bc_{(i_1,\dots,i_p)}(\sigma)=\begin{cases}
%     1 & \text{ if } (i_1\ \dots\ i_p) \text{ is a cycle of }\sigma;\\
%     0 & \text{ otherwise.}
%        \end{cases}
%    \end{equation}
%Each $\Bc_{(i_1,\dots,i_p)}$ is distributed according to a Bernoulli law with parameter
%$\theta/(N)_p$ (see Lemma \ref{LemDistProdB}).
%These variables are {\em not} independent.
%Nevertheless the sum $\Gamma^{(N)}_p$ of these $(N)_p/p$ Bernoulli
%variables with parameter $\theta/(N)_p$ converges in distribution
%towards a Poisson variable with parameter $\theta/p$.
%\bigskip

{\em Exceedances.}
A (weak) exceedance of a permutation $\sigma$ in $S_N$ is an integer $i$
such that $\sigma(i) \geq i$.
Let $\Bex_i$ be the random variable defined by:
\[ \Bex_i(\sigma)=\begin{cases}
    0 & \text{ if } \sigma(i) < i;\\
    1 & \text{ if } \sigma(i) \geq i.
\end{cases} \]
When $\sigma$ is a Ewens random permutation,
this random variable is distributed according to
a Bernoulli law with parameter $\frac{i+\theta}{N+\theta-1}$
(see Lemma \ref{LemDistProdB}).

Let $x$ be a fixed real number in $[0;1]$
and $\sigma$ a permutation of size $N$.
When $Nx$ is an integer, we define
\[F^{(N)}_\sigma(x):= \frac{\sum_{i=1}^{N x} \Bex_i(\sigma)}{N}\]
and we extend the function $F^{(N)}_\sigma$ by linearity between
the points $i/N$ and $(i+1)/N$ (for $1 \leq i \leq N-1$).
In sections \ref{SubsectSSEP} and \ref{SubsectPermTableaux},
we explain why we are interested in this quantity:
it is related to a statistical physics model,
the {\em symmetric simple exclusion process} (SSEP),
and to permutation tableaux, some combinatorial objects
which have been intensively studied in the last few years.

We show the following.
\begin{theorem}
    Let $x$ be a real number between $0$ and $1$ and 
    $\sigma$ a random Ewens permutation of size $N$.
    Then, almost surely,
    \[ \lim_{N \to \infty} F^{(N)}_\sigma(x) = \frac{1 - (1-x)^2}{2}.\]
    Moreover, if we define the rescaled fluctuations
    \[Z^{(N)}_\sigma(x):=\sqrt{N} \left( F^{(N)}_\sigma(x) - \esper(F^{(N)}_\sigma(x)) \right),\]
    then, for any $x_1,\dots,x_r$, the vector $(Z^{(N)}_\sigma(x_1),\dots,Z^{(N)}_\sigma(x_r))$
    converges towards a Gaussian vector $(G(x_1),\dots,G(x_r))$
    with covariance matrix $(K(x_i,x_j))_{1 \leq i,j \leq r}$,
    for some explicit function $K$ (see Section~\ref{SubsectConv}).
    \label{ThmExc}
\end{theorem}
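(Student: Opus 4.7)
The plan is to exploit Lemma~\ref{LemDistProdB}, whose name already hints that the joint distribution of the indicators $(\Bex_i)_{1 \leq i \leq N}$ is a product measure, i.e.\ the $\Bex_i$'s are \emph{mutually independent} Bernoulli variables under the Ewens measure. Once this structural fact is available, the theorem reduces to classical CLT-type analysis of non-identically distributed Bernoulli sums, together with Riemann sum computations.

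First I would compute
\[
\esper[F^{(N)}_\sigma(x)] \;=\; \frac{1}{N}\sum_{i=1}^{\lfloor Nx\rfloor} \esper[\Bex_i],
\]
plug in the Bernoulli parameter from Lemma~\ref{LemDistProdB}, and recognize the result as a Riemann sum converging to $\int_0^x (1-t)\,dt = \frac{1-(1-x)^2}{2}$. Then, for the almost sure statement, independence of the $\Bex_i$ gives $\Var(F^{(N)}_\sigma(x)) = O(1/N)$; since this is not summable on its own, I would estimate the centered fourth moment $\esper\bigl[(F^{(N)}_\sigma(x) - \esper F^{(N)}_\sigma(x))^4\bigr]=O(1/N^2)$, which is standard for sums of $N$ independent bounded random variables, and conclude via Markov's inequality and Borel--Cantelli that $F^{(N)}_\sigma(x)$ converges almost surely to $\frac{1-(1-x)^2}{2}$, for each fixed $x$.

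For the Gaussian fluctuations I would apply the method of moments, or equivalently of cumulants. Because $Z^{(N)}_\sigma(x)$ is $N^{-1/2}$ times a sum of centered independent Bernoullis and cumulants are additive over independent summands,
\[
\kappa_k\bigl(Z^{(N)}_\sigma(x)\bigr) \;=\; N^{-k/2}\sum_{i=1}^{\lfloor Nx\rfloor} \kappa_k(\Bex_i) \;=\; O\bigl(N^{1-k/2}\bigr),
\]
which vanishes for all $k \geq 3$, while the variance converges by a Riemann sum to an explicit integral. Carrying out the same cumulant estimate on arbitrary linear combinations $\sum_j \lambda_j Z^{(N)}_\sigma(x_j)$ (or, equivalently, on the mixed cumulants themselves) gives the joint Gaussian limit; the covariance function $K(x_i,x_j)$ is then identified as the integral over $[0,\min(x_i,x_j)]$ of the limiting Bernoulli variance, since $F^{(N)}_\sigma(x_i)$ and $F^{(N)}_\sigma(x_j)$ share their summands only up to index $\lfloor N\min(x_i,x_j)\rfloor$.

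The main obstacle is thus entirely packaged into Lemma~\ref{LemDistProdB}: establishing exact independence of the weak-exceedance indicators under the Ewens measure. Everything downstream is routine CLT bookkeeping. Natural candidates to prove that lemma are the Chinese Restaurant Process construction of Ewens permutations, or an explicit bijection decomposing $\sigma$ coordinate-by-coordinate, so that the event $\{\sigma(i)\ge i\}$ depends only on a single independent ingredient of the construction. Once that is in hand, the extension to joint convergence at finitely many points is immediate from the multivariate Lyapunov/cumulant framework outlined above.
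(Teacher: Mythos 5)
Your proposal rests on the claim that Lemma~\ref{LemDistProdB} establishes mutual independence of the indicators $\Bex_i$ under the Ewens measure, and this claim is false. The lemma computes joint moments of the elementary events $B^{(N)}_{i,s}$, and the formula $\theta^{\#(\widetilde{\sigma}_{\ii,\ss})}/\bigl((N+\theta-1)\cdots(N+\theta-r)\bigr)$ is visibly \emph{not} the product of the individual expectations, so not even the $B^{(N)}_{i,s}$ are independent; a fortiori the $\Bex_i=\sum_{s\ge i}B^{(N)}_{i,s}$ are not. Indeed the paper computes explicitly, for $i<j$,
\[
\Cov(\Bex_i,\Bex_j)=-\frac{(N-j+\theta)(i-1)}{(N+\theta-1)^2(N+\theta-2)},
\]
which is of order $N^{-1}$ and strictly negative. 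This is not a harmless error: summing these $O(N^{-1})$ pairwise covariances over the roughly $N^2$ pairs $(i,j)$ contributes at exactly the same order as the sum of the variances, which is why the limiting covariance in the paper is
\[
K(x,y)=\int_0^{\min(x,y)} t(1-t)\,dt-\int_{0 \leq t \leq x,\ 0 \leq u \leq y}\min(t,u)\,(1-\max(t,u))\,dt\,du,
\]
whereas your argument would produce only the first term. So your route does not merely reprove the theorem differently; it yields a quantitatively wrong covariance function.

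The correct substitute for independence is the \emph{small-cumulant} estimate: the paper shows (Proposition~\ref{PropBoundTau0}, derived from Theorem~\ref{ThMain} together with the graph-quotient Lemma~\ref{LemGraphFibers2}) that $\kappa(B^{(N)}_{i_1,s_1},\dots,B^{(N)}_{i_\ell,s_\ell})=O\bigl(N^{-|\{i_1,\dots,i_\ell,s_1,\dots,s_\ell\}|+1}\bigr)$, and then sums these bounds (Lemma~\ref{LemNumberLists}) to get $\kappa_\ell\bigl(F^{(N)}_\sigma(x_1),\dots,F^{(N)}_\sigma(x_\ell)\bigr)=O(N^{-\ell+1})$. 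This is small but nonzero; for $\ell=2$ it is exactly of the order that survives after the $\sqrt{N}$ rescaling, producing the second term of $K$. Your downstream steps (fourth-moment plus Borel--Cantelli for the almost-sure convergence, vanishing of cumulants of order $\ge 3$ for the Gaussian limit) match the paper's structure, but they are only valid once the additivity of cumulants over independent summands is replaced by this joint-cumulant bound. As written, the ``main obstacle'' you identify cannot be overcome, because the statement you propose to prove in its place is false.
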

%\begin{theorem}[\cite{DerridaLongRangeCorrelation}] \label{ThmExc}
%For a fixed $x$, the sequence of random variables
%\[ F^{(N)}(x) : \begin{array}{rcl}
%    S_N & \to & \RR \\
%    \sigma & \mapsto & F^{(N)}_\sigma(x)
%\end{array} \]
%converges almost surely towards the constant
%\[ \int_0^x t dt = \frac{x^2}{2}\]
%and its fluctuations are Gaussian.
%\end{theorem}
%If $i \neq j$, the variables $\Bex_i$ and $\Bex_j$ are {\em not}
%independent (their covariance is computed explicitly in section
%\ref{SubsectConv}).
%Nevertheless, the first-order limit and the Gaussian fluctuations of order $N^{-1/2}$ correspond
%to what would happen with independent variables
%(only the actual value of the covariance matrix $K(x_i,x_j)$ is different).
%
Although we have no interpretation for that,
let us note that the limit of $F^{(N)}_\sigma(x)$ is 
the cumulative distribution function of a $\beta$ variable with parameters $1$ and $2$.

With this formulation, Theorem~\ref{ThmExc} is new,
but the first part is quite easy 
while the second is a consequence of 
\cite[Appendix A]{DerridaLongRangeCorrelation}
(see Section \ref{SectExc}).
We also refer to an article of A. Barbour and S. Janson
\cite{BarbourJansonLimitPermTableaux},
where the case of the uniform measure is addressed with another method.
\bigskip

{\em Adjacencies.}
We consider here only uniform random permutations, that is the case $\theta=1$.
An adjacency of a permutation $\sigma$ in $S_N$ is an integer $i$ such that
$\sigma(i+1)=\sigma(i) \pm 1$.
As above, we introduce the random variable $\Bad_i$ which takes value $1$
if $i$ is an adjacency and $0$ otherwise.
Then $\Bad_i$ is distributed according to a Bernoulli law
with parameter $\frac{2}{N}$.
An easy computation shows that they are {\em not} independent.

We are interested in the total number of adjacencies in $\sigma$, that is
the random variable on $S_N$ defined by $A^{(N)}=\sum_{i=1}^{N-1} \Bad_i$.
\begin{theorem}[\cite{WolfowitzAdjacencies}]
    $A^{(N)}$ converges in distribution towards a Poisson variable with parameter $2$.
    \label{ThmAdj}
\end{theorem}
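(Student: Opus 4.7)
The plan is to use the method of factorial moments: since the $\mathrm{Poisson}(\lambda)$ distribution has factorial moments $\lambda^{k}$ and is determined by its moments, it is enough to prove
\[
\esper\bigl[A^{(N)}(A^{(N)}-1)\cdots(A^{(N)}-k+1)\bigr] \xrightarrow[N\to\infty]{} 2^{k}
\]
for every fixed $k\geq 1$. Expanding the falling factorial, this rewrites as
\[
\sum_{(i_{1},\ldots,i_{k})} \Prob\bigl(i_{1},\ldots,i_{k}\text{ are all adjacencies}\bigr),
\]
the sum ranging over ordered $k$-tuples of pairwise distinct elements of $\{1,\ldots,N-1\}$.

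To control the joint probabilities I would classify the tuples by their \emph{chain structure}: the underlying set $\{i_{1},\ldots,i_{k}\}$ decomposes into maximal runs of consecutive integers, of lengths $m_{1},\ldots,m_{r}$ with $m_{1}+\cdots+m_{r}=k$. A chain of length $m$ starting at position $i$ forces $\sigma(i),\sigma(i+1),\ldots,\sigma(i+m)$ to be an arithmetic progression with common difference $\pm 1$, and injectivity of $\sigma$ constrains it to be monotone, leaving only $2(N-m)$ admissible choices. Distinct maximal chains use disjoint positions and, by injectivity, disjoint image intervals, so a direct enumeration of permutations matching a given chain profile shows that the joint probability is of order $2^{r}/N^{k}$. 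Since there are of order $N^{r}$ index tuples with a prescribed profile, each profile contributes $O(N^{r-k})$ to the factorial moment.

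Only the profile $r=k$ (no two $i_{j}$'s are neighbors) contributes asymptotically. For this case, the exact probability that any $k$ pairwise non-overlapping positions are all adjacencies is $2^{k}(N-k)!/N!$, while there are $k!\binom{N-k}{k}\sim N^{k}$ such ordered tuples, so their total contribution converges to $2^{k}$. Summing the $O(1/N)$ error from the finitely many overlapping chain profiles completes the argument. The main obstacle is the uniform combinatorial control of these overlapping profiles: one must check that, no matter how the chosen indices merge into chains, the $O(N^{r-k})$ bound holds with a constant independent of $N$. This reduces to the elementary observation that forcing a monotone run of consecutive integers of length $m+1$ costs a factor of order $1/N^{m}$ in probability, exactly reflecting the paper's guiding intuition that events involving images of different integers are almost independent.
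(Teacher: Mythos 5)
Your argument is correct, but it follows a genuinely different route from the paper. You prove convergence of the factorial moments $\esper\bigl[(A^{(N)})(A^{(N)}-1)\cdots(A^{(N)}-k+1)\bigr]\to 2^k$ by a direct combinatorial enumeration, classifying index tuples by their decomposition into maximal runs of consecutive integers; the key points — that injectivity forces each run to map to a monotone arithmetic progression (so a run of $m$ adjacency-indices costs a factor of order $N^{-m}$ beyond the first), and that the exact probability for $k$ non-overlapping adjacencies is $2^k(N-k)!/N!$ (which one checks via $2^k\,k!\binom{N-k}{k}(N-2k)!=2^k(N-k)!$) — are both right, and the overlapping profiles indeed contribute $O(N^{r-k})=O(N^{-1})$. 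This is essentially the classical Kaplansky–Wolfowitz moment method, which the paper explicitly acknowledges as the original source of this result. The paper instead deduces Theorem \ref{ThmAdj} from its general main lemma (Theorem \ref{ThMain}): it expands the $\ell$-th cumulant of $A^{(N)}=\sum_{i,s,\epsilon}B^{(N)}_{i,s}B^{(N)}_{i+1,s+\epsilon}$ by multilinearity, bounds each elementary joint cumulant via the graph-quotient lemmas of Section \ref{SectGraph}, and shows $\kappa_\ell(A^{(N)})\to 2$ for every $\ell$. Your approach is more elementary and self-contained and yields an exact formula for the dominant term, but it is tailored to the uniform measure and to this one statistic; the paper's cumulant machinery is heavier but handles general Ewens parameter $\theta$ (Theorem \ref{ThmAdjEwens}) and treats adjacencies, cycle counts, exceedances and dashed patterns within a single framework. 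One small caveat: as written, your enumeration of permutations (counting $2(N-m)$ monotone image runs and filling the rest freely) uses uniformity, so it proves the statement as stated ($\theta=1$) but would need the conditioning structure of Lemma \ref{LemDistProdB} to extend to general $\theta$.
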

This result first appeared in papers of J. Wolfowitz and I. Kaplansky
\cite{WolfowitzAdjacencies, KaplanskyAdjacencies}
and was rediscovered recently in the context of genomics
(see \cite{AdjacencesPoisson} and
also \cite[Theorem 10]{BouvelEtAlFormeArbreDecomposition}).
%Note that it corresponds exactly to what would have been obtained if the
%variables $\Bad_i$ were independent.
\bigskip

%Of course, as the Bernoulli random variable considered in each of these
%examples are {\em not} independent, the explanations given for these
%results are not rigorous proofs.
In these three examples, the underlying variables behave asymptotically
as independent.
%Nevertheless, the considered events involve (most of the time)
%the images of different integers by the permutation $\sigma$.
%Therefore, speaking informally, they are {\em almost} independent.
The main lemma of this paper is a precise statement of this {\em almost} independence,
that is an upper bound on joint cumulants.
This result allows us to give new proofs 
of the three results presented above in a uniform way.
Besides, our proofs follow the intuition that
events involving the image of different integers
are almost inedependent.

\subsection{The main lemma} \label{SubsectMainResult}
From now on, $N$ is a positive integer and $\sigma$ a random
Ewens permutation in $S_N$.
We shall use the standard notation $[N]$ 
for the set of the first $N$ positive integers.

If $i$ and $s$ are two integers in $[N]$, we consider the Bernoulli
variable $B^{(N)}_{i,s}$ which takes value $1$ if and only if $\sigma(i)=s$.
Despite its simple definition, this collection of events allows to reconstruct
the permutation and thus generates the full algebra of observables
(we call them {\em elementary events}).

For random variables $X_1,\dots,X_\ell$ on the same probability space
(with expectation denoted $\esper$),
we define their joint cumulant
\begin{equation}
    \kappa (X_1,\dots,X_\ell) = [t_1 \dots t_\ell] \ln 
    \bigg( \esper \big( \exp(t_1 X_1 + \dots + t_\ell X_\ell) \big) \bigg).
    \label{EqDefCumulant}
\end{equation}
As usual, $[t_1 \dots t_\ell] F$ stands for the coefficient of $t_1 \dots t_\ell$ 
in the series expansion of $F$ in positive powers of $t_1, \dots, t_\ell$.
Joint cumulants have been introduced by 
Leonov and Shiryaev \cite{LeonovShiryaevCumulants}.
For a summary of their most useful properties,
see \cite[Proposition~6.16]{JansonRandomGraphs}.

Our main lemma is a bound on joint cumulants of products of elementary events.
To state it, we introduce the following notations.
Consider two lists of positive integers of the same length
$\ii=(i_1,\dots,i_r)$ and $\ss=(s_1,\dots,s_r)$ and
define the graphs $G_1(\ii,\ss)$ and $G_2(\ii,\ss)$ as follows:
\begin{itemize}
  \item the vertex set of $G_1(\ii,\ss)$ is $[r]$ and 
  $j$ and $h$ are linked in $G_1(\ii,\ss)$ if and only if $i_j=i_h$ {\em and} $s_j=s_h$.
  \item the vertex set of $G_2(\ii,\ss)$ is also $[r]$ and
  $j$ and $h$ are linked in $G_2(\ii,\ss)$ if and only if $\{i_j,s_j\} \cap \{i_h,s_h\} \neq \emptyset$.
\end{itemize}
The connected components of a graph $G$ form a set partition of its vertex set that
we denote $\Conn(G)$.
Besides, if $\Pi$ is a set-partition $\Pi$,
we write $\#(\Pi)$ for its number of parts.
In particular, 
$\#(\Conn(G))$ is the number of connected components of $G$.
 
Finally, if $\pi_1$ and $\pi_2$, we denote $\pi_1 \vee \pi_2$
the finest partition which is coarser than $\pi_1$ and $\pi_2$
(here, {\em fine} and {\em coarse} refer to the refinement order; see
Section \ref{SubsecSetPartitions}).
\begin{theorem}[main lemma]
    Fix a positive integer $r$.
    There exists a constant $C_r$, depending on $r$,
    such that for any set partition $\tau=(\tau_1,\dots,\tau_\ell)$ of $[r]$,
    any $N \geq 1$ and lists $\ii=(i_1,\dots,i_r)$ and
    $\ss=(s_1,\dots,s_r)$ of integers in $[N]$, one has:
    \begin{equation}\label{EqMain}
        \left| \kappa\left( \prod_{j \in \tau_1} B^{(N)}_{i_j,s_j}, \dots,
        \prod_{j \in \tau_\ell} B^{(N)}_{i_j,s_j} \right) \right| 
        \leq C_r
        N^{-\#\big(\Conn(G_1(\ii,\ss))\big) - \#\big(\Conn(G_2(\ii,\ss)) \vee \tau \big) +1}.
    \end{equation}
    \label{ThMain}
\end{theorem}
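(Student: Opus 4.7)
The plan is to attack the cumulant via its standard Möbius-inversion expansion
\begin{equation*}
    \kappa(Y_1,\dots,Y_\ell) = \sum_{\pi \in \PPP([\ell])} (-1)^{|\pi|-1} (|\pi|-1)!\, \prod_{B\in\pi} \esper\!\left[\prod_{k\in B} Y_k\right],
\end{equation*}
with $Y_k=\prod_{j\in\tau_k} B^{(N)}_{i_j,s_j}$, combined with an explicit formula for the joint moments under Ewens measure and a careful accounting of the cancellations in this alternating sum.

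Since each $B^{(N)}_{i,s}$ is $\{0,1\}$-valued, the joint moments reduce to
\[
\esper\!\left[\prod_{k\in B} Y_k\right] = \Prob\bigl[\sigma(i_j)=s_j \text{ for all } j\in\textstyle\bigcup_{k\in B}\tau_k\bigr],
\]
which vanishes unless the listed constraints form a partial permutation and otherwise admits a closed expression through the Ewens weight $\mu_N(\sigma)=\theta^{\#(\sigma)}/\prod_{i=0}^{N-1}(\theta+i)$. A direct computation shows that any non-vanishing such probability is of order $N^{-t}$, where $t$ is the number of distinct pairs $(i_j,s_j)$ among the relevant indices; taken globally, this quantity equals $\#\Conn(G_1(\ii,\ss))$ and will account for the first factor in the target exponent.

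The heart of the argument is an approximate multiplicativity: if a consistent constraint set $S$ splits into pieces $S_1,\dots,S_m$ that are pairwise disjoint in both coordinates---i.e.\ into the connected components of $G_2$ restricted to the distinct edges of $S$---then
\begin{equation*}
    \Prob[\sigma\supset S] = \prod_{\alpha=1}^m \Prob[\sigma\supset S_\alpha]\cdot\bigl(1+O(N^{-1})\bigr).
\end{equation*}
For $\theta=1$ this reduces to $\Prob[\sigma\supset S]=(N-|S|)!/N!$ and an elementary ratio estimate; for general $\theta$ the verification amounts to comparing the Ewens-weighted completion counts on both sides and exploiting that the cycle number $\#(\sigma)$ varies only by a bounded amount (controlled by $r$) as the constraint pieces are fused. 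Feeding this factorisation back into the Möbius formula, the exactly multiplicative leading terms cancel by the classical identity that cumulants of independent groups vanish, and only the $O(N^{-1})$ remainders survive. Tracking how many such remainders must be collected produces precisely one extra factor of $N^{-1}$ per block of $\Conn(G_2(\ii,\ss))\vee\tau$ beyond the first, yielding the full exponent $-\#\Conn(G_1)-\#(\Conn(G_2)\vee\tau)+1$.

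The principal obstacle will be establishing the $O(N^{-1})$ factorisation uniformly in $N$ with a constant depending only on $r$: because Ewens measure is weighted by cycle number, $\Prob[\sigma\supset S]$ depends not only on $|S|$ but on the fine cycle/path structure of $S$, and the error control requires careful combinatorial bookkeeping of how cycle counts of constraint sets behave under disjoint union. Once this quantitative factorisation is in hand, the remaining tracking of $N$-exponents in the alternating cumulant sum is comparatively mechanical.
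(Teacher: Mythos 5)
Your overall route is the same as the paper's (expand the cumulant by Möbius inversion over $\PPP([\ell])$, compute the Ewens joint moments explicitly, and exploit approximate factorisation over the connected components of $G_2$), but the quantitative factorisation you propose is too weak, and this is exactly where the real work lies. You assert a factorisation with a single relative error $O(N^{-1})$,
\[
\Prob[\sigma\supset S]=\prod_{\alpha=1}^m \Prob[\sigma\supset S_\alpha]\,\bigl(1+O(N^{-1})\bigr),
\]
and then claim that feeding this into the alternating cumulant sum leaves ``one extra factor of $N^{-1}$ per block beyond the first.'' That last step does not follow. A pairwise (or any-split) factorisation with error $O(N^{-1})$ only yields $\kappa_\ell=O(N^{-1})$, not $O(N^{-\ell+1})$. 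Concretely, take $\ell=3$ with normalised first moments $M_{\{j\}}=1$ and suppose $M_\Delta=1+c/N$ for every $\Delta$ with $|\Delta|\ge 2$; this is perfectly consistent with your hypothesis, yet
\[
\kappa_3=M_{123}-M_{12}M_3-M_{13}M_2-M_{23}M_1+2M_1M_2M_3=-\tfrac{2c}{N},
\]
which is only $O(N^{-1})$. What is actually needed is the full hierarchy of \emph{quasi-factorisation}: writing $M_{A,\Delta}=\prod_{\delta\subseteq\Delta}U_{A,\delta}$ via Möbius inversion on the Boolean lattice, one must show $U_{A,\Delta}=1+O(N^{-|\Delta|+1})$ for \emph{every} $\Delta$ of size $\ge 2$, i.e.\ the $k$-point connected correction must be $O(N^{-k+1})$, not merely $O(N^{-1})$. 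This is the content of the paper's Lemma \ref{LemMultiplicativeCritetion}, and verifying it for the Ewens moments $\theta^{\#}/(N+\theta-1)_a$ reduces to the nontrivial cancellation $\prod_{\delta\subseteq[\ell-1]}\bigl(X-\sum_{j\in\delta}a_j\bigr)^{(-1)^{|\delta|}}=1+O(X^{-\ell+1})$ (Lemma \ref{LemTechCumulants}), which your sketch does not supply. Your own $\ell=3$ computation above shows the claimed cancellation cannot be a formal consequence of the first-order estimate; it genuinely uses the specific shape of the Ewens moments.

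A second, lesser gap: your proposal is essentially confined to the case of distinct pairs $(i_j,s_j)$ and of $\tau$ interacting trivially with the coincidence structure. To get the stated exponent $-\#\Conn(G_1)-\#(\Conn(G_2)\vee\tau)+1$ in general you must control the contribution of partitions $\pi\ge\tau$ whose meet with $\Conn(G_1)$ and $\Conn(G_2)$ is prescribed; the paper does this by slicing the Möbius sum and bounding \emph{truncated} cumulants (Lemma \ref{LemCumulantsTronques}). ``Restricting to the distinct edges of $S$'' as you suggest does not by itself produce the join with $\tau$ in the exponent. Both gaps are fixable, but they constitute the technical core of the proof rather than ``comparatively mechanical'' bookkeeping.
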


Note that the integer $\#\big(\Conn(G_1(\ii,\ss))\big)$ is the number
of {\em different} pairs $(i_j,s_j)$.
The second quantity involved in the theorem $\#\big(\Conn(G_2(\ii,\ss)) \vee \tau \big)$
does not have a similar interpretation.
However, it admits an equivalent description.
Consider the graph $G'_2$,
obtained from $G_2(\ii,\ss)$ by merging
 vertices corresponding to elements in the same part of $\tau$.
 Then $\#\big(\Conn(G_2(\ii,\ss)) \vee \tau \big)$ is the number of 
 connected components of $G'_2$.

 As an example, let us consider the {\em distinct case},
 that is we assume that the entries in the lists
$\ii$ and $\ss$ are distinct.
We shall use the standard notation for falling factorials
$(x)_a=x\, (x-1)\, \cdots\, (x-a+1)$.
In this case, the expectation of a product of $B^{(N)}_{i,s}$ is simply 
$1/(N+\theta-1)_a$, where $a$ is the number of factors
(the case $\theta=1$ is obvious, while the general case is explained in
Lemma~\ref{LemDistProdB}).
Joint cumulants can be expressed in terms of joint moments
-- see \cite[Proposition 6.16 (vi)]{JansonRandomGraphs} --,
so the left-hand side of \eqref{EqMain} can be written
as an explicit rational function in $N$ of degree $-r$.
According to our main lemma, the sum has degree at most $-\ell - r +1$,
which means that many simplifications are happening
(they are not at all trivial to explain!).

This reflects the fact that the variables $B^{(N)}_{i_j,s_j}$ 
behave asymptotically as independent
(joint cumulants vanish when the set of variables can be split
into two mutually independent sets).

\begin{remark}
    It is worth noticing that our proof of the main lemma 
    goes through a very general criterion for a family of
    sequences of random variables to have small cumulants:
    see Lemma \ref{LemMultiplicativeCritetion}. 
    This may help to find a similar behaviour
    (that is random variables with small cumulants)
    in completely different contexts, see Section \ref{subsect:Future}.
\end{remark}

\subsection{Applications}
Recall that, if $Y^{(1)},\dots,Y^{(m)}$ are random variables such that the law of the 
$m$-tuple $(Y^{(1)},\dots,Y^{(m)})$ is entirely determined by its joint moments,
then the two following statements are equivalent 
(see \cite[Theorem 30.2]{BillingsleyProbMeasure}
for the analogous property in terms of moments).
\begin{itemize}
    \item For any $\ell$ and any list $i_1,\dots,i_\ell$ in $[m]$,
        \[\lim_{n \to \infty} \kappa \left(X_n^{(i_1)},\dots,X_n^{(i_\ell)} \right) 
        =\kappa \left( Y^{({i_1})},\dots,Y^{({i_\ell})} \right). \]
    \item The sequence of vectors $(X_n^{(1)},\dots,X_n^{(m)})$ converges in distribution
        towards the vector $(Y^{(1)},\dots,Y^{(m)})$.
\end{itemize}
As Gaussian and Poisson variables are determined by their moments (see
{\it e.g.} the criterion \cite[Theorem 30.1]{BillingsleyProbMeasure}),
cumulants can be used to prove convergence in distribution towards Gaussian or 
Poisson variables, such as the results of Section~\ref{SubsectMotivations}.

Theorem~\ref{ThMain} can be used to give new uniform proofs of
Theorems~\ref{ThmCycles}, \ref{ThmExc} and \ref{ThmAdj}.
Moreover, we get an extension of Theorem~\ref{ThmAdj}
to any value of the parameter $\theta$.
%The result is the same (the limit law is a Poisson law with parameter $2$),
%but the usual approach {\it via} generating series  for the uniform case
%seems hard to extend to this framework:
%it would need an explicit form for the bivariate
%generating series of permutations with respect to their number of cycles
%and of exceedances and such a result does not exist to our knowledge.

%We must confess that our proofs of these results are quite technical.
%However, an important part of the difficulty is contained
%in the proof of Theorem~\ref{ThMain} and hence must not be done
%again for each application.
%Moreover, these proofs are natural in the following sense:
%they are based on the idea that, when $\sigma$ is a uniform random permutation,
%$\sigma(i)$ and $\sigma(j)$ are almost independent.
%Besides, although the problems may seem quite different
%(in particular the limit law is not always the same),
%these proofs all follow roughly the same guidelines.

To give more evidence that our approach is quite general,
we study the number of occurrences of {\em dashed patterns}.
This notion has been introduced\footnote{In the paper of Babson and
 Steingrímsson, they are called generalized patterns. But, as some more general
{\em generalized patterns} have been introduced since (see next section),
we prefer to use {\em dashed patterns}.}
 in 2000 by E. Babson and E. Steingrímsson~\cite{SteingrimssonGeneralizedPatterns}.

\begin{definition}
A dashed pattern of size $p$ is the data of a permutation $\tau \in S_p$ and a subset $X$ 
of $[p-1]$.
An occurrence of the dashed pattern $(\tau,X)$ in a permutation $\sigma \in S_N$
is a list $i_1 < \dots < i_p$ such that:
\begin{itemize}
    \item for any $x \in X$, one has $i_{x+1}=i_x+1$.
    \item $\sigma(i_1),\dots,\sigma(i_p)$ is in the same relative order as
        $\tau(1),\dots,\tau(p)$.
\end{itemize}
The number of occurrences of the pattern $(\tau,X)$ will be denoted
$O_{\tau,X}^{(N)}(\sigma)$.
\label{DefDashedPaatterns}
\end{definition}

\begin{example}
    $O^{(N)}_{21,\emptyset}$ is the number of inversions,
    while $O^{(N)}_{21,\{1\}}$ is the number of descents.
    Many classical statistics on permutations can be written as the number
    of occurrences of a given dashed patten or
    as a linear combination of such statistics,
    see \cite[Section 2]{SteingrimssonGeneralizedPatterns}.
\end{example}

Thanks to our main lemma, we describe the second order asymptotics of
the number of occurrences of any given dashed pattern
in a random Ewens permutation.
\begin{theorem}
    Let $(\tau,X)$ be a dashed pattern of size $p$ (see definition
    \ref{DefDashedPaatterns})
    and $\sigma_N$ a sequence of random Ewens permutations.
    We denote $q=|X|$.
    Then, $\frac{O_{\tau,X}^{(N)}(\sigma_N)}{N^{p-q}}$,
    that is the renormalized number of occurrences of $(\tau,X)$,
     tends almost surely towards
    $\frac{1}{p!(p-q)!}$.
    Besides, one has the following central limit theorem:
    \[ Z^{(N)}_{(X,\tau)}:= \sqrt{N} \left( \frac{O_{\tau,X}^{(N)}}{N^{p-q}} - 
    \frac{1}{p! (p-q)!} \right) \to
    \NNN(0,V_{\tau,X}),\]
    where the arrow denotes a convergence in distribution and 
    $V_{\tau,X}$ is some nonnegative real number.
    \label{ThmDashedPatterns}
\end{theorem}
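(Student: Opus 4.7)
The plan is to apply the cumulant method built on Theorem~\ref{ThMain}. I would first expand
\begin{equation*}
O_{\tau,X}^{(N)} = \sum_{(\ii, \ss)} \prod_{k=1}^p B^{(N)}_{i_k, s_k},
\end{equation*}
where the sum runs over $\ii = (i_1 < \cdots < i_p) \in [N]^p$ satisfying the adjacency conditions prescribed by $X$ (there are $\binom{N-q}{p-q}$ such $\ii$, obtained by choosing the $p-q$ block starts) and over $\ss \in [N]^p$ with distinct entries whose relative order equals $\tau$ (there are $\binom{N}{p}$ such $\ss$). Using Lemma~\ref{LemDistProdB}, each product has expectation $1/(N+\theta-1)_p$, hence
\begin{equation*}
\esper\bigl[O_{\tau,X}^{(N)}\bigr] = \binom{N-q}{p-q} \binom{N}{p} \cdot \frac{1}{(N+\theta-1)_p} \sim \frac{N^{p-q}}{p!\,(p-q)!},
\end{equation*}
which gives the claimed leading order.

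For the fluctuations, I would compute the joint cumulants $\kappa_\ell\bigl(O_{\tau,X}^{(N)}\bigr)$ for $\ell \geq 2$. Multilinearity yields a sum over $\ell$-tuples of admissible $(\ii^{(k)}, \ss^{(k)})$; concatenating these into $\ii, \ss$ of length $\ell p$ and taking the partition $\Pi = \{\Pi_1, \dots, \Pi_\ell\}$ with $\Pi_k = \{(k-1)p+1, \dots, kp\}$, Theorem~\ref{ThMain} bounds each summand by $C_{\ell p}\,N^{1 - \#\Conn(G_1(\ii,\ss)) - \#(\Conn(G_2(\ii,\ss)) \vee \Pi)}$. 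I would then group summands by the joint structural type of $(\ii, \ss)$ --- which pairs $(i_j, s_j)$ coincide, which $i$- or $s$-values overlap across different occurrences --- bound the number of tuples of each type, and sum to obtain $\kappa_\ell\bigl(O_{\tau,X}^{(N)}\bigr) = O(N^{\ell(p-q) - \ell + 1})$. The intuition is that each shared coordinate between two occurrences simultaneously reduces the tuple count by one factor of $N$ and merges two components of $\Conn(G_2(\ii,\ss)) \vee \Pi$, so the tuple count and the per-term bound balance out.

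Given this, the rescaled cumulants satisfy, for $\ell \geq 2$,
\begin{equation*}
\kappa_\ell\bigl(Z_{(X,\tau)}^{(N)}\bigr) = N^{\ell/2 - \ell(p-q)} \kappa_\ell\bigl(O_{\tau,X}^{(N)}\bigr) = O(N^{1 - \ell/2}),
\end{equation*}
which vanishes for $\ell \geq 3$ and is bounded for $\ell = 2$. Isolating the leading-order contributions to $\kappa_2$ (from $(\ii, \ss)$-structures where the two occurrences share a pair, or a single $i$- or $s$-value) produces an explicit finite nonnegative limit $V_{\tau, X}$, so by the cumulant characterization of the Gaussian recalled just before the theorem, $Z_{(X,\tau)}^{(N)} \to \NNN(0, V_{\tau,X})$. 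The almost-sure convergence of $O_{\tau,X}^{(N)}/N^{p-q}$ follows from the fourth-cumulant bound $\kappa_4\bigl(O_{\tau,X}^{(N)}/N^{p-q}\bigr) = O(N^{-3})$, which yields a centered fourth moment of order $N^{-2}$, summable in $N$; Borel--Cantelli then gives a.s.\ convergence under any natural coupling of Ewens permutations across $N$ (e.g.\ the Chinese restaurant construction).

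The main obstacle is the structural counting behind the cumulant bound. The naive $\ell$-fold sum has $O(N^{\ell(2p - q)})$ terms, so one must show that for each joint type of $(G_1(\ii,\ss), G_2(\ii,\ss))$ the number of compatible tuples $(\ii, \ss)$ is at most $N$ to a power that the exponent $-\#\Conn(G_1(\ii,\ss)) - \#(\Conn(G_2(\ii,\ss)) \vee \Pi) + 1$ compensates to give the target $\ell(p-q) - \ell + 1$. Verifying this inequality type by type, while correctly accounting for the adjacency constraints induced by $X$ and the relative-order constraint induced by $\tau$ (both of which propagate through coincident coordinates and further restrict the count of tuples), is the combinatorial core of the proof.
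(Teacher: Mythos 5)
Your overall strategy is exactly the paper's: expand $O^{(N)}_{\tau,X}$ over admissible pairs $(\ii,\ss)$, apply Theorem~\ref{ThMain} with the partition of $[p\ell]$ into the $\ell$ blocks of size $p$, count tuples by structural type, and show the two exponents balance to give $\kappa_\ell(O^{(N)}_{\tau,X})=O(N^{(p-q-1)\ell+1})$; the passage from that bound to the CLT and to a.s.\ convergence via the summable fourth centered moment is also the paper's route. The difficulty is that the one step you explicitly defer --- ``verifying this inequality type by type'' --- is precisely the mathematical content of the proof, and as written the proposal contains no mechanism for carrying it out. A genuinely type-by-type verification is not practicable (the number of coincidence patterns grows superexponentially in $p\ell$, and the inequality must hold uniformly), so this is a real gap rather than a routine omission.

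The paper closes this gap with a uniform graph-theoretic argument. One introduces a single auxiliary graph $H$ on $2p\ell$ vertices recording near-coincidences (differences at most $1$) among all the $i^r_j$ and $s^r_j$; Lemma~\ref{LemNumberListsGen} bounds the number of tuples with $\#(\Conn(H))=t$ by $C N^t$, and the chain of inequalities
\[\#(\Conn(H)) \le \#(\Conn(H_1)) + \#(\Conn(H_2)) \le \#(\Conn(H_1)) + \#(\Conn(H'_2)) + (p-q-1)\ell\]
--- obtained from Lemma~\ref{LemGraphFibers2} (strong vs.\ usual quotient) and Lemma~\ref{LemContractionFibers}, together with the observation that the adjacency constraints from $X$ force each induced graph $H_2[[p]\times\{r\}]$ to have at most $p-q$ components --- converts the exponent from Theorem~\ref{ThMain} into $-\#(\Conn(H))+(p-q-1)\ell+1$, so that the two powers of $N$ cancel for every type simultaneously. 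Without these quotient lemmas (or an equivalent spanning-forest argument) your balancing claim is only an intuition. Two smaller points: your closed formula $\esper[O^{(N)}_{\tau,X}]=\binom{N-q}{p-q}\binom{N}{p}/(N+\theta-1)_p$ drops the factor $\theta^{\#(\widetilde{\sigma}_{\ii,\ss})}$ from Lemma~\ref{LemDistProdB} (harmless asymptotically, since pairs whose partial permutation has a cycle contribute only $O(N^{2p-q-1})$ terms, but it is not an identity); and the existence of the limit $V_{\tau,X}$ is not automatic from ``isolating leading-order contributions'' --- the paper derives it from the fact that $\kappa_2(O^{(N)}_{\tau,X})$ is a rational function of $N$ which, combined with the $O(N^{2(p-q)-1})$ bound, must converge after renormalization.
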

This theorem is proved in Section \ref{SubsectDashedPatterns} using Theorem \ref{ThMain}.
%Note that an approach with generating series
%would probably fail as it seems unlikely that there exists a general formula
%for the generating series of the number of occurrences of any given dashed pattern.

Unfortunately, we are not able to show in general that
the constant $V_{\tau,X}$ is positive 
($V_{\tau,X}=0$ would mean that we have not chosen the good normalization factor).
We formulate it as a conjecture.
\begin{conjecture}\label{ConjVariance}
    For any dashed pattern $(\tau,X)$, one has $V_{\tau,X} >0$.
\end{conjecture}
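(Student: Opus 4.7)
The plan is to express $V_{\tau,X}$ as the squared $L^2$-norm of a first-order Hoeffding projection of $O_{\tau,X}^{(N)}$, after which the conjecture reduces to showing this projection does not vanish asymptotically. Writing $O_{\tau,X}^{(N)} = \sum_{\ii} Y_{\ii}$, where $\ii = (i_1, \ldots, i_p)$ ranges over valid index tuples (the adjacency constraints in $X$ leave $p - q$ free indices) and $Y_{\ii}$ is the indicator that $(\sigma(i_1), \ldots, \sigma(i_p))$ has the same relative order as $\tau$, I would expand each $Y_{\ii}$ as a sum over admissible image tuples $\ss$ of products $\prod_{j} B^{(N)}_{i_j,s_j}$ and apply Theorem~\ref{ThMain} with $\ell = 2$ to bound the covariances $\Cov(Y_{\ii}, Y_{\jj})$ according to the overlap of $\ii$ with $\jj$.

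A careful bookkeeping then shows that the dominant contribution to $\Var(O_{\tau,X}^{(N)})$, of order $N^{2(p-q) - 1}$, comes from pairs $(\ii, \jj)$ sharing exactly one free index, while all other overlap patterns are smaller by at least a factor of~$N$. Summing these dominant contributions and passing to the scaling limit $i = \lfloor xN \rfloor$, $\sigma(i) = \lfloor yN \rfloor$ should yield a representation
\[
    V_{\tau,X} \;=\; \int_0^1 \!\! \int_0^1 \Phi_{\tau,X}(x,y)^2 \, \rho_\theta(x,y)\, dx\, dy,
\]
where $\rho_\theta > 0$ is the rescaled joint density of $(i/N, \sigma(i)/N)$ under the Ewens measure and $\Phi_{\tau,X}(x,y)$ is a polynomial in $(x,y)$, namely the rescaled limit of the centered conditional count $\esper[O_{\tau,X}^{(N)} \mid \sigma(i) = s]/N^{p-q} - 1/(p!(p-q)!)$. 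Deriving this formula is largely mechanical from the main lemma together with standard estimates on Ewens conditional laws.

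The genuine obstacle is to show that $\Phi_{\tau,X}$ is not identically zero. I would compute $\Phi_{\tau,X}$ combinatorially as a sum indexed by the position $k \in [p]$ into which the conditioned entry $\sigma(i) = s$ fits inside the pattern; each $k$ contributes an explicit polynomial in $(x, y)$, and the task is to rule out cancellation of the full sum. For patterns without dashes ($X = \emptyset$), a Taylor expansion at a corner of $[0,1]^2$ should already separate the contributions from different $k$ by their monomial degrees. When $X \neq \emptyset$, the forced adjacencies introduce correlations that can produce partial cancellations; handling this case uniformly seems to require either a closed-form expression for $\Phi_{\tau,X}$, perhaps via an exponential generating function, or an induction on $|X|$ bootstrapping from the dash-free case. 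One also needs to exclude degenerate patterns such as $p=1$, where $V_{\tau,X} = 0$ trivially.
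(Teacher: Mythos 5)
This statement is an open conjecture in the paper, not a theorem: the author explicitly writes that he is unable to show $V_{\tau,X}>0$ in general, and offers only B\'ona's partial result for the monotone patterns ($\tau=\Id_k$ with $X=\emptyset$ or $X=[k-1]$) together with a computer verification for all patterns of size at most $8$. Your proposal does not close this gap. The first two paragraphs are a plausible (though unverified) scheme for computing the limiting variance, but the entire content of the conjecture is concentrated in your last paragraph, where you yourself identify that ``the genuine obstacle is to show that $\Phi_{\tau,X}$ is not identically zero'' and then only list strategies that ``seem to require'' further tools, without carrying any of them out. Ruling out cancellation in exactly such a signed expression is precisely what the author could not do --- his own formula for $V_{\tau,X}$ is a signed sum of products of binomial coefficients --- so what you have written is a reformulation of the difficulty, not a proof.

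There is also a structural issue with the proposed representation $V_{\tau,X}=\int_0^1\!\!\int_0^1\Phi_{\tau,X}(x,y)^2\rho_\theta(x,y)\,dx\,dy$. The summands attached to a random permutation are not independent: the bijectivity constraint forces negative correlations (compare the explicit formula for $\Cov(\Bex_i,\Bex_j)$ in Section 5, which is negative). In Hoeffding-type combinatorial central limit theorems for statistics of the form $\sum_i a_{i,\sigma(i)}$, the asymptotic variance is governed by the \emph{doubly centered} kernel $\hat{a}_{ij}=a_{ij}-a_{i\cdot}-a_{\cdot j}+a_{\cdot\cdot}$, not by the singly centered one. So even if you established $\Phi_{\tau,X}\not\equiv 0$, this would not suffice: you would need the doubly centered part of the kernel to be nonzero, which is an additional cancellation to exclude. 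Until that combinatorial non-vanishing is proved for every $(\tau,X)$ (and the degenerate case $p=1$, where the statistic is deterministic and $V=0$, is set aside), the statement remains exactly what the paper calls it: a conjecture.
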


The following partial result has been proved by M. B\'ona 
\cite[Propositions 1 and 2]{BonaMonotonePatterns}
(M. B\'ona works with the uniform distribution,
but it should not be too hard to show that $V_{\tau,X}$ does not depend
on $\theta$).
\begin{proposition}
    For any $k \ge 1$, $\tau=\Id_k$ and $X=\emptyset$ or $X=[k-1]$,
    Conjecture~\ref{ConjVariance} holds true.
\end{proposition}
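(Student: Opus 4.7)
The plan is to reduce the problem to uniform random permutations ($\theta = 1$) and then carry out explicit second-order computations, treating the two cases $X = [k-1]$ and $X = \emptyset$ separately. First, I would argue that $V_{\Id_k, X}$ does not depend on $\theta$: in the asymptotic expansion of $\Var(O^{(N)}_{\Id_k, X})/N^{2(p-q)-1}$ obtained from decomposing into elementary Bernoullis $B^{(N)}_{i,s}$, the $\theta$-dependent corrections coming from the Ewens normalization only affect lower-order terms in $N$, and so they do not contribute to $V_{\Id_k, X}$. Hence it suffices to work with $\theta = 1$.

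For the case $X = [k-1]$, write
\[ O^{(N)}_{\Id_k, [k-1]} = \sum_{i=1}^{N-k+1} X_i, \quad X_i = \mathbf{1}[\sigma(i) < \sigma(i+1) < \cdots < \sigma(i+k-1)]. \]
Under the uniform measure, the relative orderings of $\sigma$ on two disjoint sets of positions are mutually independent (a direct computation on $S_N$, valid for $N \ge 2k$), so $\Cov(X_i, X_j) = 0$ whenever $|i-j| \ge k$. Hence $(X_i)$ is a $(k-1)$-dependent, essentially stationary sequence, and
\[ V_{\Id_k, [k-1]} = \Var(X_0) + 2\sum_{d=1}^{k-1} \Cov(X_0, X_d). \]
Each covariance on the right reduces to counting permutations of $k+d$ consecutive integers which are simultaneously increasing on two prescribed overlapping windows of length $k$. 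The delicate point is positivity of the total sum: one approach is an explicit generating-function computation (as in Bóna's Proposition~1); an alternative is to verify directly that the spectral density at frequency zero of the $(k-1)$-dependent stationary process $(X_i)$ is strictly positive.

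For the case $X = \emptyset$, expand $O^{(N)}_{\Id_k, \emptyset} = \sum_I h_I$ where $h_I = \mathbf{1}[\sigma \text{ increasing on } I]$ and $I$ ranges over $k$-subsets of $[N]$. Again by independence of relative orderings on disjoint position sets, $\Cov(h_I, h_J) = 0$ when $I \cap J = \emptyset$, so the leading variance contribution of order $N^{2k-1}$ comes from pairs with $|I \cap J| = 1$. I would control this via a Hoeffding-type first projection: the function $g(s) := \esper[h_I \mid \sigma(\min I) = s] = \binom{N-s}{k-1}/(N-1)_{k-1}$ tends uniformly to $(1-s/N)^{k-1}/(k-1)!$ as $N \to \infty$, a strictly nonconstant function of $s/N$, so $\Var(g(\sigma(\min I)))$ is strictly positive. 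This variance equals the $N^{2k-1}$-coefficient of $\Var(O^{(N)}_{\Id_k, \emptyset})$ up to an explicit combinatorial factor, yielding $V_{\Id_k, \emptyset} > 0$. I expect the main obstacle to be the positivity in the $X = [k-1]$ case, where signed covariance contributions must be shown not to cancel: the clean Hoeffding reduction is not available there, and a direct combinatorial evaluation seems unavoidable.
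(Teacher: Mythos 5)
The paper gives no independent proof of this proposition: it invokes B\'ona's Propositions 1 and 2, where $V_{\Id_k,X}$ is written out as a signed sum of products of binomial coefficients and checked to be positive. Your proposal attempts a self-contained argument whose skeleton is sound, but it stops short of the one point that constitutes the entire content of the statement. The existence of the nonnegative limit $V_{\tau,X}$ is already Theorem~\ref{ThmDashedPatterns}; what must be proved here is \emph{strict} positivity, and in the case $X=[k-1]$ you explicitly leave that unverified (``a direct combinatorial evaluation seems unavoidable''), i.e.\ you defer to precisely the computation the paper is citing. The gap is real but small. With your setup, $\esper(X_0X_d)=1/(k+d)!$ exactly for $1\le d\le k-1$, because two overlapping increasing windows force $\sigma$ to be increasing on the whole union of the $k+d$ positions; hence
\[ V_{\Id_k,[k-1]}\;=\;\frac{1}{k!}+2\sum_{d=1}^{k-1}\frac{1}{(k+d)!}-\frac{2k-1}{(k!)^2},\]
which is positive for all $k\ge 2$: the first and last terms already dominate when $k!\ge 2k-1$ (i.e.\ $k\ge3$), and for $k=2$ the sum evaluates to $1/12$. (For $k=1$ the statistic is deterministic and $V=0$, so the degenerate case must be excluded anyway.) You should carry out this three-line verification rather than flag it as the main obstacle.

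Two further points. In the case $X=\emptyset$, the claim that $\Var\bigl(g(\sigma(\min I))\bigr)$ with $g(s)=\binom{N-s}{k-1}/(N-1)_{k-1}$ ``equals the $N^{2k-1}$-coefficient of the variance up to an explicit combinatorial factor'' is not correct as stated. A pair $(I,J)$ with $I\cap J=\{m\}$ contributes, to leading order, $\Cov\bigl(g_a(\sigma(m)),g_b(\sigma(m))\bigr)$ where $a$ and $b$ are the ranks of $m$ inside $I$ and inside $J$; summing over all such pairs, the leading coefficient is the integral over $x=m/N$ of the variance of a weighted combination of all $k$ conditional functions $\tilde g_1,\dots,\tilde g_k$, not of $\tilde g_1$ alone. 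The conclusion survives---the integrand is automatically a variance, hence nonnegative, and it is nonconstant for $x$ near $0$, where only the $a=1$ term contributes---but the argument has to be organized as a quadratic form in the ranks, which is essentially B\'ona's signed sum in disguise. Finally, the reduction to $\theta=1$ is asserted in one line, and it is not innocuous: under the Ewens measure each individual covariance acquires a relative correction of order $N^{-1}$, and summed over the $\Theta(N^{2k})$ pairs with disjoint position sets (which contribute exactly $0$ when $\theta=1$) such corrections could a priori enter at the order of $V$ itself. The paper is no better off here---it only remarks that the $\theta$-independence ``should not be too hard''---but a complete proof must either control these terms or run the covariance computation for general $\theta$.
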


The proof relies on an expression of $V_{\tau,X}$
as a signed sum of products of binomial coefficients.
This expression can be extended to the general case
and we have checked by computer
that Conjecture~\ref{ConjVariance}
holds true for all patterns of size $8$ or less.

%\begin{remark}
%    The asymptotic number of occurrences of some patterns (or {\em motifs})
%    have already been studied in the framework of random words,
%    where results similar to Theorem \ref{ThmDashedPatterns} have been obtained
%    \cite{NicodemeFlajoletSalvyMotifStatistics,FlajoletValleePatterns,
%    BourdonValleePatterns}.
%    Note that the methods used by these authors can {\it a priori} not be
%    extended to the framework of permutation patterns for the following reasons.
%    First, it is not known how to compute the generating functions of the number
%    of occurrences of a given pattern in permutations.
%    Second, except in the case $\theta=1$ (uniform random permutations), the
%    occurrences of patterns in disjoint places are not independent events
%    (see remark \ref{RemInd} for details).
%\end{remark}

\subsection{Comparison with other methods}
\label{SubsectComparaisonLitterature}

There is a huge literature on random permutations.
While we will not make a comprehensive survey of the subject,
we shall try to present the existing methods and results related
to our paper.

Our Poisson convergence results have been obtained previously
by the moment method in the articles \cite{KaplanskyAdjacencies}
and \cite{WattersonEwensMeasure}.
Our cumulant approach is not really different from these proofs.
Yet, we have chosen to present these examples for two reasons:
\begin{itemize}
    \item first, it illustrates the fact that our approach
        can prove in a uniform ways
        convergence towards different distributions~;
    \item second, the combinatorics is simpler in the Poisson cases,
        so they serve as toy model to explain the general structure
        of the proofs.
\end{itemize}
Let us mention also the existence of a powerful method,
called the Stein-Chen method, that proves Poisson convergence,
together with precise bounds on total variation distances
-- see, {\em e.g.}, \cite[Chapter 4]{PoissonApproximation}.

Let us now consider our normal approximation results.
For uniform permutations, both are already known or could be obtained easily
with methods existing in the literature.
\begin{itemize}
    \item Theorem \ref{ThmExc} has been proved by A. Barbour and S. Janson
        \cite{BarbourJansonLimitPermTableaux},
        who established a functional version of a combinatorial central
        limit theorem from Hoeffding \cite{HoeffdingCombinatorialCLT}.
        This theorem deals with statistics of the form
        \[\sum_{1\le i,j\le N} a^{(N)}_{i,j} B^{(N)}_{i,j} \]
        where $A^{(N)}$ is a sequence of deterministic $N\times N$ matrices.
\item Theorem \ref{ThmDashedPatterns} has been proved for some particular
    patterns using dependency graphs and cumulants: see 
    \cite[Theorems 10 and 17]{BonaMonotonePatterns} and 
    \cite[Section 6]{HitczenkoJansonNormalityStatPermTab}.
    The case of a general pattern (under uniform distribution) can be handled
    with the same arguments.
\end{itemize}
These methods are very different one from each other
and none of them can be used to prove both results in a uniform way.
Note also that they only work in the uniform case.
Yet, going from the uniform model to a general Ewens distribution
should be doable using the {\em chinese restaurant process}
\cite[Example 2.4]{LogarithmicCombinatorialStructures}
(with this coupling, an Ewens random permutation 
differs from a uniform random permutation by $O(2|\theta-1| \log(n))$ values).

To conclude, while less powerful in the Poisson case,
our method has the advantage of providing a uniform proof for all these
results and to extend directly to a general Ewens distribution.

\subsection{Future work}\label{subsect:Future}
In addition to the conjecture above, we mention three directions for further
research on the topic.

It would be interesting to describe which permutation statistics
can be (asymptotically) studied with our approach.
This problem is discussed in Section \ref{SubsectLocalStat}.

Another direction consists in refining our convergence results
(speed of convergence, large deviations, local limit laws)
by following the same guideline.

Finally, it is natural to wonder if the method can be extended to other family of objects.
The extension to colored permutations should be straightforward.
A promising direction is the following:
consider a graph $G$ with vertex set $[n]$
and take some random subset $S$ of its vertices,
uniformly among all subsets of size $p$.
If $p$ grows linearly with $n$, then the events
``{\em $i$ lies in $S$}'' (for $1 \le i \le n$) have small joint cumulants
(this is easy to see with the material of Section \ref{SectProof}).

\subsection{Preliminaries: set partitions}
\label{SectPrelim}
\label{SubsecSetPartitions}
The combinatorics of set partitions is central in the theory of cumulants 
 and will be important in this article.
 We recall here some well-known facts about them.

A {\em set partition} of a set $S$ is a (non-ordered) family of non-empty disjoint
subsets of $S$ (called parts of the partition), whose union is $S$.

Denote $\PPP(S)$ the set of set partitions of a given set $S$.
Then $\PPP(S)$ may be endowed with a natural partial order:
the {\em refinement} order.
We say that $\pi$ is {\em finer} than $\pi'$ or $\pi'$ {\em coarser} than $\pi$
(and denote $\pi \leq \pi'$)
if every part of $\pi$ is included in a part of $\pi'$.

Endowed with this order, $\PPP(S)$ is a complete lattice, which means that
each family $F$ of set partitions admits a join (the finest set partition
which is coarser than all set partitions in $F$, denoted with $\vee$)
and a meet (the coarsest set partition
which is finer than all set partitions in $F$, denoted with $\wedge$).
In particular, there is a maximal element $\{S\}$ (the partition in only one
part) and a minimal element $\{ \{x\}, x \in S\}$ (the partition in singletons).

Moreover, this lattice is ranked:
the rank $\rk(\pi)$ of a set partition $\pi$ is $|S|-\#(\pi)$,
where $\#(\pi)$ denotes the number of parts of $\pi$.
The rank is compatible with the lattice structure in the following sense:
for any two set partitions $\pi$ and $\pi'$,
\begin{equation}\label{EqRkJoin}
\rk(\pi \vee \pi') \leq \rk(\pi) + \rk(\pi').
\end{equation}

Lastly, denote $\mu$ the M\"obius function of the partition lattice $\PPP(S)$.
In this paper, we only use evaluations of $\mu$ at pairs $(\pi,\{S\})$
(that is the second argument is the maximum element of $\PPP(S)$).
In this case, the value of the M\"obius function is given by:
\begin{equation}\label{EqValueMobius}
    \mu(\pi, \{S\})=(-1)^{\#(\pi)-1} (\# (\pi)-1)!.
\end{equation}

\subsection{Outline of the paper}
The paper is organized as follows.
Section \ref{SectProof} contains the proof of the main lemma.
Then, in Section \ref{SectGraph}, we give two easy lemmas on connected components
of graphs, which appear in all our applications.
The three last sections are devoted to the different applications:
Section \ref{SectCycles} for cycles, Section \ref{SectExc} for exceedances
and finally, Section \ref{SectPatterns} for generalized patterns (including 
adjacencies and dashed patterns).

\section{Proof of the main lemma}\label{SectProof}
This section is devoted to the proof of Theorem~\ref{ThMain}.
It is organized as follows.
First, we give a simple formula for the joint moments of the elementary events $(B_{i,s})$.
Second, we establish a general criterion based on joint moments
that implies that the corresponding variables have small joint cumulants.
Third, this criterion is used to prove Theorem~\ref{ThMain}
in the case of distinct indices.
The general case finally can be deduced from this particular case,
as shown in the last part of this section.

\subsection{Joint moments}\label{SubsectJointMoments}
The first step of the proof consists in
computing the joint moments of the family of random variables
$(B^{(N)}_{i,s})_{1 \leq i,s \leq N}$.

Note that $(B^{(N)}_{i,s})^2=B^{(N)}_{i,s}$,
while  $B^{(N)}_{i,s} B^{(N)}_{i,s'}=0$ if $s\neq s'$ and
$B^{(N)}_{i,s} B^{(N)}_{i',s}=0$ if $i\neq i'$.
Therefore, we can restrict ourselves to the computation of
the joint moment 
$\esper \left( B^{(N)}_{i_1,s_1} \cdots B^{(N)}_{i_r,s_r} \right) $,
in the case where $\ii=(i_1, \dots, i_r)$ and $\ss=(s_1, \dots, s_r)$ are two lists
of distinct indices (some entry of the list $\ii$
can be equal to an entry of $\ss$).

We see these two lists as a {\em partial permutation}
\[\widetilde{\sigma}_{\ii,\ss} = 
\left( \begin{array}{ccc} i_1 & \dots & i_r \\ s_1 & \dots &s_r \end{array}\right), \]
which sends $i_j$ to $s_j$.
The notion of cycles of a permutation can be naturally extended to partial permutations:
$(i_{j_1},\dots,i_{j_\gamma})$ is a cycle of the partial permutation
if $s_{j_1}=i_{j_2}$, $s_{j_2}=i_{j_3}$ and so on until $s_{j_\gamma}=i_{j_1}$.
Note that a partial permutation does not necessarily have cycles.
The number of cycles of $\widetilde{\sigma}_{\ii,\ss}$
is denoted $\#(\widetilde{\sigma}_{\ii,\ss})$.

The computation of $\esper \left( B^{(N)}_{i_1,s_1} \cdots B^{(N)}_{i_r,s_r} \right) $
relies on two important properties of the Ewens measure.
First, it is conjugacy-invariant.
Second, a random sampling can be obtained inductively by the following procedure
(see, {\it e.g.} \cite[Example 2.19]{LogarithmicCombinatorialStructures}).

Suppose that we have a random Ewens permutation $\sigma$ of size $N-1$.
Write it as a product of cycles and apply the following random transformation.
\begin{itemize}
\item With probability $\theta/(N+\theta-1)$, add $N$ as a fixed point.
More precisely, $\sigma'$ is defined by:
\[\begin{cases}\sigma'(i)=\sigma(i) & \text{for } i <N;\\
\sigma'(N)=N.
\end{cases}\]
\item For each $j$, with probability $1/(N+\theta-1)$, add $N$ just before $j$ in its cycle.
More precisely, $\sigma'$ is defined by:
\[\begin{cases}\sigma'(i)=\sigma(i) & \text{for } i  \neq \sigma^{-1}(j),N;\\
\sigma'(N)=j;\\
\sigma'(\sigma^{-1}(j))=N.
\end{cases}\]
\end{itemize}
Then $\sigma'$ is a random Ewens permutation of size $N$.
Iterating this, one obtains a linear time and space algorithm
to pick a random Ewens permutation.

Let us come back now to the computation of joint moments.
The following lemma may be known, 
but the author has not been able to find it in the literature.
\begin{lemma}\label{LemDistProdB}
Let $\sigma$ be a random Ewens permutation.
Then one has
\[ \esper \left( B^{(N)}_{i_1,s_1} \cdots B^{(N)}_{i_r,s_r} \right) 
= \frac{\theta^{\#(\widetilde{\sigma}_{\ii,\ss})} }{
(N+\theta-1) \dots (N+\theta-r)}.\]
\end{lemma}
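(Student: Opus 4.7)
My plan is to compute the expectation directly by combinatorial enumeration, without appealing to induction. Writing $I = \{i_1,\dots,i_r\}$ and $S = \{s_1,\dots,s_r\}$ (both of size $r$ by the distinctness assumption), the product $\prod_j B^{(N)}_{i_j,s_j}$ is the indicator that $\sigma$ extends the partial permutation $\widetilde{\sigma}_{\ii,\ss}$, so by \eqref{DefEwens}:
\[ \esper\Bigl(\prod_{j=1}^r B^{(N)}_{i_j,s_j}\Bigr) = \frac{1}{\theta(\theta+1)\cdots(\theta+N-1)} \sum_{\sigma\,\text{extending}\,\widetilde{\sigma}_{\ii,\ss}} \theta^{\#(\sigma)}. \]

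The crux is a combinatorial decomposition. I decompose $\widetilde{\sigma}_{\ii,\ss}$ into its $c := \#(\widetilde{\sigma}_{\ii,\ss})$ cycles together with some number $p$ of \emph{proper chains} (maximal directed paths starting in $I\setminus S$ and ending in $S\setminus I$). A vertex count gives $|I\cup S|=r+p$, so there are $n_0 := N-r-p$ \emph{free} vertices in $[N]\setminus (I\cup S)$, and the number $N-r = n_0 + p$ of ``reduced objects'' (the $p$ chains together with the $n_0$ free vertices) equals the number of free positions of $\sigma$. I then define a map $\sigma \mapsto \tau$ from extensions of $\widetilde{\sigma}_{\ii,\ss}$ to permutations of the $N-r$ reduced objects by sending each chain $C$ to the reduced object containing $\sigma(\text{end of } C)$ and each free vertex $f$ to the reduced object containing $\sigma(f)$. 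A direct verification---using that $\sigma$ is a bijection and that $\sigma([N]\setminus I)=[N]\setminus S$---shows that this map is a bijection onto $S_{N-r}$, and tracing the cycles of $\sigma$ through chains and free vertices gives the key identity $\#(\sigma)=c+\#(\tau)$: each cycle of $\sigma$ either coincides with a cycle of $\widetilde{\sigma}_{\ii,\ss}$ or corresponds to a cycle of $\tau$.

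Summing and invoking the standard identity $\sum_{\tau\in S_n}\theta^{\#(\tau)} = \theta(\theta+1)\cdots(\theta+n-1)$ (the normalization of the Ewens measure) with $n = N-r$, I obtain
\[ \esper\Bigl(\prod_{j=1}^r B^{(N)}_{i_j,s_j}\Bigr) = \frac{\theta^c\,\theta(\theta+1)\cdots(\theta+N-r-1)}{\theta(\theta+1)\cdots(\theta+N-1)} = \frac{\theta^{\#(\widetilde{\sigma}_{\ii,\ss})}}{(N+\theta-1)\cdots(N+\theta-r)}, \]
which is the claimed formula.

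The main obstacle is the rigorous verification of both the bijection $\sigma\mapsto\tau$ and the cycle-count identity $\#(\sigma)=c+\#(\tau)$: they reduce to elementary book-keeping but require a careful case analysis, particularly when a cycle of $\sigma$ alternates between chain interiors, chain endpoints, and free vertices. A natural alternative---perhaps better aligned with the paper's emphasis on the inductive sampling of the Ewens measure---is induction on $r$, conditioning on $B^{(N)}_{i_r,s_r}$ and using that the reduced permutation on $N-1$ elements (obtained by deleting $i_r$ when $i_r=s_r$, or by removing $s_r$ from its cycle otherwise) is again Ewens distributed, with cycle count shifted by exactly $1$ or $0$ respectively; this matches the marginal probabilities $\theta/(N+\theta-1)$ and $1/(N+\theta-1)$.
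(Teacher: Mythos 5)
Your proof is correct, but it takes a genuinely different route from the paper's. The paper first uses conjugacy-invariance of the Ewens measure to assume $i_1=N-r+1,\dots,i_r=N$, and then runs the sequential (Chinese-restaurant-style) construction of an Ewens permutation recalled just before the lemma: the constraint $\sigma(i_j)=s_j$ pins down, for each $j$, exactly one of the $N-r+j$ insertion choices at step $N-r+j$, contributing a factor $1/(N+\theta-j')$ or $\theta/(N+\theta-j')$ according to whether the insertion closes a cycle of $\widetilde{\sigma}_{\ii,\ss}$; the product of these insertion probabilities is the stated formula. You instead build a global bijection between extensions of the partial permutation and $S_{N-r}$ (chains plus free vertices as reduced objects) under which $\#(\sigma)=c+\#(\tau)$, and then sum Ewens weights using the normalization identity $\sum_{\tau\in S_n}\theta^{\#(\tau)}=\theta(\theta+1)\cdots(\theta+n-1)$. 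Your bijection is sound: each reduced object contains exactly one element of $[N]\setminus I$ and exactly one of $[N]\setminus S$, and $\sigma$ restricts to a bijection $[N]\setminus I\to[N]\setminus S$ because images of $I$ are exactly $S$; the cycle identity follows since a cycle of $\sigma$ either lies entirely inside the prescribed edges (hence is a cycle of $\widetilde{\sigma}_{\ii,\ss}$) or its successive visits to $[N]\setminus I$ trace out a cycle of $\tau$. What each approach buys: the paper's argument is shorter and reuses the sampling algorithm it has already set up, but needs the conjugation reduction and a somewhat delicate description of where to insert each $i_j$; yours is self-contained (it needs only the normalization constant of the measure), avoids the WLOG step, and makes the factor $\theta^{\#(\widetilde{\sigma}_{\ii,\ss})}$ transparent as counting the cycles already closed by the partial permutation. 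Your closing remark correctly identifies the paper's actual strategy as the inductive/sequential alternative.
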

For example, the parameters of the Bernoulli variables $B^{(N)}_{i,s}$ are given by
\[\esper(B^{(N)}_{i,s}) = \begin{cases}
\frac{\theta}{N+\theta-1} &\text{ if } i=s ; \\
\frac{1}{N+\theta-1} &\text{ if } i \neq s.
\end{cases}\]
 \begin{proof}
As Ewens measure is constant on conjugacy classes of $S_N$, one can assume without
loss of generality that $i_1=N-r+1$, $i_2=N-r+2$, \dots, $i_r=N$.
Then permutations of $S_N$ with $\sigma(i_j)=s_j$ are obtained in the previous algorithm as follows:
\begin{itemize}
\item Choose any permutation in $S_{N-r}$.
\item For $1 \leq j \leq r$, add $i_j$ in the place given by the following rule:
if $s_j<i_j$, add $i_j$ just before $s_j$ in its cycle.
Otherwise, look at $\PP(i_j)$, $\PP^2(i_j)$ and so on until you find an element smaller than $i_j$
and place $i_j$ before it.
If there is no such element, then $i_j$ is a minimum of a cycle of $\PP$.
In this case, put it in a new cycle.
\end{itemize}
It is easy to check with the description of the construction of a permutation under Ewens measure
that these choices of places happen with a probability
\[\frac{\theta^{\#(\widetilde{\sigma}_{\ii,\ss})} }
{(N+\theta-1) \dots (N-r+\theta)}. \qedhere\]
 \end{proof}

\subsection{A general criterion for small cumulants}
Let $A^{(N)}_1$,\dots,$A^{(N)}_\ell$ be $\ell$ sequences of random variables.
We introduce the following notation for joint moments and cumulants of subsets
of these variables:
for a subset $\Delta=\{j_1,\dots,j_h\}$ of $[\ell]$, we write
\[M_{A,\Delta}^{(N)}=\esper\left(  A^{(N)}_{j_1} \dots A^{(N)}_{j_h} \right),
\quad \kappa_{A,\Delta}^{(N)} = \kappa\left(  A^{(N)}_{j_1}, \dots, A^{(N)}_{j_h} \right).\]
We also introduce the auxiliary quantity $U_{A,\Delta}^{(N)}$ implicitly defined by
the property: 
for any subset $\Delta \subseteq [\ell]$,
\[ \prod_{\delta \subseteq \Delta} U_{A,\delta}^{(N)} = M_{A,\Delta}^{(N)}.\]
Using M\"obius inversion on the boolean lattice, we have explicitly:
for any subset $\Delta \subseteq [\ell]$,
\[ U_{A,\Delta}^{(N)}= \prod_{\delta \subseteq \Delta}                       
                \left( M_{A,\delta}^{(N)} \right)^{(-1)^{|\delta|}}\]

\begin{lemma}
    Let $A^{(N)}_1, \dots, A^{(N)}_\ell$ be a list of
    sequences of random variables
    with normalized expectations, that is,
    for any $N$ and $j$, $\esper(A^{(N)}_j) = 1$.
    Then the following statements are equivalent:
    \begin{enumerate}[label=\Roman*]
        \item \hspace{-.2cm}. \label{ItemQuasiFactorization}
            {\em Quasi-factorization property:}
            for any subset $\Delta \subseteq [\ell]$ of size at least $2$, one has
            \begin{equation}\label{EqQuasiFactorization}
                U_{A,\Delta}^{(N)}
                = 1 + O(N^{-|\Delta|+1}) ;
            \end{equation}
        \item \hspace{-.2cm}. \label{ItemSmallCumulants}
            {\em Small cumulant property:}
            for any subset $\Delta \subseteq [\ell]$ of size at least $2$, one has
            \begin{equation}\label{EqSmallCumulants}
                \kappa_{A,\Delta}^{(N)} = O(N^{-|\Delta|+1}). 
            \end{equation}
    \end{enumerate}
    \label{LemMultiplicativeCritetion}
\end{lemma}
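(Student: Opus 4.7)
The plan is to derive an exact identity expressing $\kappa_{A,\Delta}^{(N)}$ in terms of the differences $V_{A,\delta}^{(N)} := U_{A,\delta}^{(N)} - 1$, and then to deduce both directions from it: the forward one directly, the backward one by induction on $|\Delta|$. For brevity I abbreviate $\kappa_\Delta$, $M_B$, $U_\delta$, $V_\delta$ in what follows.

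The first step is to start from the moment-cumulant inversion
\[
\kappa_\Delta \;=\; \sum_{\pi \in \PPP(\Delta)} \mu(\pi, \{\Delta\}) \prod_{B \in \pi} M_B,
\]
to substitute $M_B = \prod_{\delta \subseteq B}(1 + V_\delta)$, and to expand. Since $U_\emptyset = U_{\{j\}} = 1$ (using the normalization $\esper(A_j^{(N)}) = 1$), only factors $V_\delta$ with $|\delta| \geq 2$ can survive. After collecting them and swapping the two summations, the inner sum over $\pi$ becomes $\sum_{\pi \geq \pi(\mathcal{F})} \mu(\pi, \{\Delta\})$, where $\pi(\mathcal{F})$ denotes the finest set partition in which every $\delta \in \mathcal{F}$ lies inside a single block. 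By the standard Möbius identity this collapses to $\mathbf{1}[\pi(\mathcal{F}) = \{\Delta\}]$, yielding
\[
\kappa_\Delta \;=\; \sum_{\mathcal{F}} \prod_{\delta \in \mathcal{F}} V_\delta,
\]
where $\mathcal{F}$ ranges over families of subsets of $\Delta$ (each of size $\geq 2$) whose induced hypergraph on $\Delta$ is connected. I also need a simple combinatorial lemma: for a connected hypergraph on $n$ vertices with hyperedges of sizes $s_1, \dots, s_m$ (each $\geq 2$), one has $\sum_i (s_i - 1) \geq n - 1$; indeed, each edge of size $s$ can reduce the number of connected components by at most $s - 1$, and one must go from $n$ components to $1$.

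Granted the identity and the lemma, the implication \ref{ItemQuasiFactorization} $\Rightarrow$ \ref{ItemSmallCumulants} is immediate: each of the finitely many terms $\prod V_\delta$ has order $N^{-\sum(|\delta|-1)} \leq N^{-|\Delta|+1}$. For the converse, I induct on $|\Delta|$. The base case $|\Delta| = 2$ is direct because $U_{\{j_1,j_2\}} = M_{\{j_1,j_2\}} = 1 + \kappa_{\{j_1,j_2\}}$. For the inductive step, I split the sum in the identity according to whether $\Delta \in \mathcal{F}$ or not. The terms with $\Delta \in \mathcal{F}$ factor as $V_\Delta \cdot \prod_{2 \leq |\delta| < |\Delta|} U_\delta$, since connectedness is automatic once $\Delta$ itself is a hyperedge; the remaining terms involve only proper $V_\delta$'s and, by the induction hypothesis (equivalence of \ref{ItemQuasiFactorization} and \ref{ItemSmallCumulants} for smaller sizes) together with the combinatorial lemma, sum to $O(N^{-|\Delta|+1})$. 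The prefactor $\prod_{2 \leq |\delta| < |\Delta|} U_\delta$ is close to $1$ for $N$ large, so solving for $V_\Delta$ and using the assumption $\kappa_\Delta = O(N^{-|\Delta|+1})$ yields $V_\Delta = O(N^{-|\Delta|+1})$, which is \ref{ItemQuasiFactorization} for $\Delta$.

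I expect the most delicate point to be the clean derivation of the central identity: one has to carefully track the bookkeeping when expanding the product $\prod_{\delta \subseteq B}(1 + V_\delta)$ across all blocks of a generic partition $\pi$ simultaneously, and to correctly identify the condition ``every $\delta \in \mathcal{F}$ lies inside some block of $\pi$'' with the partition-lattice inequality $\pi \geq \pi(\mathcal{F})$, so that Möbius inversion on $\PPP(\Delta)$ applies cleanly.
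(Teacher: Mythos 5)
Your proof is correct. The forward direction is essentially the paper's argument: you expand each moment as $\prod_{\delta}(1+V_\delta)$, push the expansion through the M\"obius sum, observe that only connected families survive, and bound each surviving term via the rank inequality $\sum_i(|\delta_i|-1)\ge |\Delta|-1$ for connected hypergraphs (the paper gets this from $\rk(\pi\vee\pi')\le\rk(\pi)+\rk(\pi')$ in the partition lattice). Where you genuinely diverge is the converse. The paper also inducts on the size of $\Delta$, but to close the induction it introduces an auxiliary family $B^{(N)}_1,\dots,B^{(N)}_\ell$ of (possibly complex-valued, or purely formal/umbral) random variables sharing all proper joint moments with $A$ and whose top moment is \emph{defined} to satisfy quasi-factorization exactly; applying the forward direction to $B$ and comparing $\kappa_{A,[\ell]}-\kappa_{B,[\ell]}=M_{A,[\ell]}-M_{B,[\ell]}$ yields the claim. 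This forces the paper into a digression (its Remark on umbral calculus) about whether such a family exists. You avoid that entirely by exploiting the exact identity $\kappa_\Delta=\sum_{\mathcal F\ \mathrm{connected}}\prod_{\delta\in\mathcal F}V_\delta$ a second time: isolating the families containing $\Delta$ itself gives the factorization $V_\Delta\prod_{\delta\subsetneq\Delta,\,|\delta|\ge 2}U_\delta$ (connectedness being automatic), the remaining families are controlled by the induction hypothesis and the same rank inequality, and you simply solve for $V_\Delta$ using that the prefactor tends to $1$. This is more self-contained and arguably cleaner. One cosmetic remark: when expanding $\prod_\delta(1+V_\delta)$ the index should run over \emph{sets} of distinct subsets, as you have it, rather than ordered lists; the paper's wording at the corresponding step is slightly loose on this point, though it is immaterial for the bounds.
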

\begin{proof}
    Let us consider the implication {\it \ref{ItemQuasiFactorization}} $\Rightarrow$ 
    {\it \ref{ItemSmallCumulants}}.
    We denote $T^{(N)}_\Delta=U_{A,\Delta}^{(N)}-1$ 
    and assume that $T^{(N)}_\Delta=O(N^{-|\Delta|+1})$ for any $\Delta \subseteq [\ell]$
    of size at least $2$.
    The goal is to prove that $ \kappa_{A,[\ell]}^{(N)}= O(N^{-\ell+1})$.
    Indeed, this corresponds to the case $\Delta = [\ell]$ of {\it \ref{ItemSmallCumulants}},
    but the same proof will work for any $\Delta \subseteq [\ell]$.

    Recall the well-known relation between joint moments and cumulants 
    \cite[Proposition 6.16 (vi)]{JansonRandomGraphs}:
    \begin{equation}\label{EqMoment2Cumulant}
    \kappa_{A,[\ell]}^{(N)} = \sum_{\pi \in \PPP([\ell])} \mu(\pi, \{[\ell]\})
        \prod_{C \in \pi} M_{A,C}^{(N)}.
    \end{equation}
    But joint moments can be expressed in terms of $T$:
    \[ M_{A,C}^{(N)} =
    \prod_{\Delta \subseteq C \atop |\Delta| \geq 2} (1 + T^{(N)}_\Delta) 
    = \sum_{\Delta_1,\dots,\Delta_m} T^{(N)}_{\Delta_1} \dots T^{(N)}_{\Delta_m},
    \]
    where the sum runs over all finite lists of distinct
    (but not necessarily disjoint) subsets of $C$
    of size at least $2$ (in particular, the length $m$ of the list is not fixed).
    When we multiply this over all blocks $C$ of a set partition $\pi$,
    we obtain the sum of $T^{(N)}_{\Delta_1} \dots T^{(N)}_{\Delta_m}$ over all lists 
    of distinct subsets of $[\ell]$ of size at least $2$
    such that each $\Delta_i$ is contained in a block of $\pi$.
    In other terms, for each $i \in [m]$, $\pi$ must be coarser
    than the partition $\Pi(\Delta_i)$,
    which, by definition, has $\Delta_i$ and singletons as blocks.
    Finally,
    \begin{equation}\label{EqCumulantsSumSets}
        \kappa_{A,[\ell]}^{(N)} = \sum_{\Delta_1,\dots,\Delta_m \atop
    \text{distinct}} T^{(N)}_{\Delta_1} \dots T^{(N)}_{\Delta_m} 
    \left( \sum_{\pi \in \PPP([\ell]) \atop \forall i, \ \pi \geq \Pi(\Delta_i)}
    \mu(\pi, \{[\ell]\}) \right).
\end{equation}
The condition on $\pi$ can be rewritten as
\[\pi \geq \Pi(\Delta_1) \vee \dots \vee  \Pi(\Delta_m).\]
Hence, by definition of the M\"obius function, the sum in the parenthesis
is equal to $0$, unless $\Pi(\Delta_1) \vee \dots \vee  \Pi(\Delta_m) = \{[\ell]\}$
(in other terms, unless the hypergraph with edges $(\Delta_i)_{1\le i \le m}$ is
connected).
On the one hand, by Equation~\eqref{EqRkJoin}, it may happen only if:
\[ \sum_{i=1}^m \rk\big(\Pi(\Delta_i)\big) = \sum_{i=1}^m (|\Delta_i|-1) \geq \rk([\ell]) = \ell - 1.\]
On the other hand, one has
\[T^{(N)}_{\Delta_1} \dots T^{(N)}_{\Delta_m} = O \left( N^{-\sum_{i=1}^m (|\Delta_i|-1)} \right).\]
Hence only summands of order of magnitude $N^{-\ell + 1}$ or less survive and one has
\[\kappa_{A,[\ell]}^{(N)} = O(N^{-\ell+1}) \]
which is exactly what we wanted to prove.
\bigskip

Let us now consider the converse statement.
We proceed by induction on $\ell$ and we assume that, 
for all $\ell'$ smaller than a given $\ell \geq 2$, 
the theorem holds.

Consider some sequences of random variables $A^{(N)}_1$, \dots, $A^{(N)}_\ell$
such that {\it \ref{ItemSmallCumulants}} holds.
By induction hypothesis, one gets immediately that 
\[ \forall \Delta \subsetneq [\ell],       
      U_{A,\Delta}^{(N)}=1+O(N^{-|\Delta|+1}). \]
Note that an immediate induction shows that the joint moment fulfills
\[ \forall \Delta \subsetneq [\ell],\ M_{A,\Delta}^{(N)}=O(1) \text{ and }
(M_{A,\Delta}^{(N)})^{-1}=O(1).\]
It remains to prove that 
\[ U_{A,[\ell]}^{(N)} = \prod_{\Delta \subseteq [\ell]}       
      (M_{A,\Delta}^{(N)})^{(-1)^{|\Delta|}}=1+O(N^{-\ell+1}).\]
Thanks to the estimate above for joint moments, this can be rewritten as
\begin{equation}\label{EqRewriteQuasiFact}
    M^{(N)}_{A,[\ell]} = \prod_{\Delta \subsetneq [\ell]} 
      (M_{A,\Delta}^{(N)})^{(-1)^{\ell-1-|\Delta|}}
      + O(N^{-\ell+1}).
  \end{equation}
Consider $\ell$ sequences of random variables $B^{(N)}_1$,\dots, $B^{(N)}_\ell$ such that,
for $\Delta \subsetneq [\ell]$,
the equality $M_{B,\Delta}^{(N)}=M_{A,\Delta}^{(N)}$ holds,
and such that 
Equation~\eqref{EqRewriteQuasiFact} is fulfilled when $A$ is replaced by $B$ 
(the reader may wonder whether such a family $B$ exists;
let us temporarily ignore this problem, which will be addressed in Remark \ref{RemUmbral}).
By definition, the family $B$ of sequences of random variables fulfills condition 
{\it \ref{ItemQuasiFactorization}} of the theorem and, hence, using the first
part of the proof, has also property {\it \ref{ItemSmallCumulants}}. 
In particular:
\[\kappa_{B,[\ell]}^{(N)} = O(N^{-\ell+1}).\]
But, by hypothesis,
\[\kappa_{A,[\ell]}^{(N)} = O(N^{-\ell+1}).\]
As $A$ and $B$ have the same joint moments, except for $M^{(N)}_{A,[\ell]}$
and $M^{(N)}_{B,[\ell]}$, this implies that
\[M_{A,[\ell]}^{(N)} - M_{B,[\ell]}^{(N)} =
\kappa_{A,[\ell]}^{(N)} - \kappa_{B,[\ell]}^{(N)} =
O(N^{-\ell+1}).\]
But the family $B$ fulfills Equation~\eqref{EqRewriteQuasiFact}
and, hence, so does family $A$.
\end{proof}

\begin{remark}
    \label{RemUmbral}
    Let $\ell$ be a fixed integer and $I$ a finite subset of $(\NN_{>0})^\ell$.
    Then, for any list $(m_\ii)_{\ii \in I}$ of numbers, one can find some
    {\em complex-valued} random variables $X_1,\dots,X_\ell$ so that
    \[\esper(X_1^{i_1}\dots X_\ell^{i_\ell}) = m_{i_1,\dots,i_\ell}.\]
    Indeed, one can look for a solution where
    $X_1$ is uniform on a finite set $\{z_1,\dots,z_T\}$
    and $X_j=X_1^{d^{j-1}}$, where $d$ is an integer bigger than all coordinates
    of all vectors in $I$.
    Then the quantities
    \[\{T \cdot \esper(X_1^{i_1}\dots X_\ell^{i_\ell}), \ii \in I\}\]
    correspond to different power sums of $z_1,\dots,z_T$.
    Thus we have to find a set $\{z_1,\dots,z_T\}$ of complex numbers
    with specified power sums up to degree $d^j$.
    This exists as soon as $T \geq d^j$, because $\CC$ is algebraically closed.
    In particular, the family $B$ considered in the proof above exists.

    However, we do not really need that this family exists.
    Indeed, during the whole proof, we are doing manipulations on the sequences
    of moments and cumulants using only the relations between them
    (equation \eqref{EqMoment2Cumulant}).
    We never consider the underlying random variables.
    Therefore, everything could be done even if the random variables 
    did not exist, as it is often done in umbral calculus
    \cite{RotaUmbral}.
\end{remark}

\subsection{Case of distinct indices}
\label{SubsecProofDistinct}
Recall that, in the statement of Theorem~\ref{ThMain},
we fix a set-partition $\tau$ and two lists $\ii$ and $\ss$ and we want to bound
the quantity
\[ \left| \kappa\left( \prod_{j \in \tau_1} B^{(N)}_{i_j,s_j}, \dots,\prod_{j \in \tau_\ell} B^{(N)}_{i_j,s_j} \right) \right|.\]
We first consider the case where all entries in the sequences
$\ii$ and $\ss$ are distinct.
To be in the situation of Lemma \ref{LemMultiplicativeCritetion},
we set, for $h \in [\ell]$ and $N \geq 1$:
\[ A^{(N)}_h =(N+\theta-1)_{a_j} \prod_{j \in \tau_h} B^{(N)}_{i_j,s_j},\]
where $a_j=|\tau_j|$.
The normalization factor has been chosen so that $\esper(A^{(N)}_h)=1$.
Hence, we will be able to apply Lemma \ref{LemDistProdB}.

Let us prove that $A^{(N)}_1, \dots, A^{(N)}_\ell$ fulfills property
{\it \ref{ItemQuasiFactorization}} of this lemma.
Of course, the case $\Delta=[\ell]$ is generic.
Thanks to Lemma \ref{LemDistProdB},
the joint moments of the family $A$ have in this case an explicit expression:
for $\delta \subseteq [\ell]$,
\[M_{A,\delta}^{(N)}=\frac{\displaystyle \prod_{j \in \delta}
(N+\theta-1)_{a_j}}{\displaystyle (N+\theta-1)_{\sum_{j \in \delta} a_j}}.\]
Therefore, we have to prove that the quantity
\[Q_{a_1,\dots,a_\ell}:=\prod_{\delta \subseteq [\ell] \atop |\delta| \geq 2} (M^{(N)}_{A,\delta})^{(-1)^{|\delta|}} = 
\prod_{\delta \subseteq [\ell]} \left((N+\theta-1)_{\sum_{j \in \delta} a_j}\right)^{(-1)^{|\delta|+1}}\]
is $1+O(N^{-\ell+1})$. \medskip

We proceed by induction over $a_\ell$.
If $a_\ell=0$, for any $\delta \subseteq [\ell -1]$,
the factors corresponding to $\delta$ and $\delta \sqcup \{\ell\}$
cancel each other.
Thus $Q_{a_1,\dots,a_{\ell-1},0}=1$ and the statement holds.

If $a_\ell>0$, the quantity $Q_{a_1,\dots,a_\ell}$ can be written as
\[Q_{a_1,\dots,a_\ell} = Q_{a_1,\dots,a_\ell-1} \cdot 
\prod_{\delta \subseteq [\ell] \atop \ell \in \delta}
\left( N+\theta -1- \sum_{j \in \delta} a_j \right)^{(-1)^{|\delta|+1}}.\]
Setting $X=N+\theta-1-a_{\ell}$, the second factor becomes
\[R_{a_1,\dots,a_{\ell-1}}(X):=\prod_{\delta \subseteq [\ell-1]} \left( X - \sum_{j \in \delta} a_j \right)^{(-1)^{|\delta|}}.\]
We will prove below (Lemma \ref{LemTechCumulants}) that 
$R_{a_1,\dots,a_{\ell-1}}(X) = 1+ O(X^{-\ell+1})$,
when $X$ goes to infinity.
Besides, the induction hypothesis implies that 
$Q_{a_1,\dots,a_\ell-1}= 1+ O(N^{-\ell+1})$
and hence 
\[Q_{a_1,\dots,a_\ell} = 1+ O(N^{-\ell+1}) \]
Using the terminology of lemma \ref{LemMultiplicativeCritetion},
it means that the list $A^{(N)}_1, \dots, A^{(N)}_\ell$ of sequences of
random variables has the quasi-factorisation property.
Thus it also has the small cumulant property and in particular
\[ \kappa(A^{(N)}_1,\dots, A^{(N)}_\ell) = O(N^{-\ell+1}). \]
Using the definition of the $A^{(N)}_h$, this can be rewritten:
\[ \kappa\left( \prod_{j \in \tau_1} B^{(N)}_{i_j,s_j}, \dots,
\prod_{j \in \tau_\ell} B^{(N)}_{i_j,s_j} \right)
= O(N^{-r-\ell+1}), \]
which is Theorem \ref{ThMain} in the case of distinct indices. \qed \bigskip

Here is the technical lemma that we left behind in the proof.
\begin{lemma}\label{LemTechCumulants}
For any positive integers $a_1,\dots,a_{\ell-1}$,
\[\prod_{\delta \subseteq [\ell-1]} \left( X - \sum_{j \in \delta} a_j \right)^{(-1)^{|\delta|}}= 1+ O(X^{-\ell+1}), \]
when $X$ is a positive number going to infinity.
\end{lemma}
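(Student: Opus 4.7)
The plan is to take the logarithm of the product and show it equals $O(X^{-\ell+1})$, then exponentiate. Write $s_\delta = \sum_{j \in \delta} a_j$, and note that for $X$ large,
\[
\log(X - s_\delta) = \log X - \sum_{k \geq 1} \frac{s_\delta^k}{k\, X^k}.
\]
Summing with the signs $(-1)^{|\delta|}$, I would obtain
\[
L(X) := \sum_{\delta \subseteq [\ell-1]} (-1)^{|\delta|} \log(X - s_\delta)
= \Big(\sum_{\delta}(-1)^{|\delta|}\Big)\log X - \sum_{k\geq 1}\frac{1}{k X^k} \sum_{\delta}(-1)^{|\delta|} s_\delta^k.
\]

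The first term vanishes because $\sum_{\delta \subseteq [\ell-1]}(-1)^{|\delta|} = 0$ (using $\ell \geq 2$, which is the only interesting range for the lemma). So the core combinatorial claim I need is
\[
\sum_{\delta \subseteq [\ell-1]} (-1)^{|\delta|} s_\delta^k = 0 \quad \text{for all } 0 \leq k \leq \ell - 2.
\]
To check this, I would expand $s_\delta^k = \sum_{(j_1,\dots,j_k) \in \delta^k} a_{j_1}\cdots a_{j_k}$ and swap the order of summation. For a fixed tuple $(j_1,\dots,j_k)$ with underlying set $S \subseteq [\ell-1]$, the inner sum becomes $\sum_{\delta \supseteq S} (-1)^{|\delta|} = (-1)^{|S|} \sum_{\delta' \subseteq [\ell-1]\setminus S}(-1)^{|\delta'|}$, which vanishes unless $S = [\ell-1]$. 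But $|S| \leq k \leq \ell - 2 < \ell - 1$, so $S$ cannot equal $[\ell-1]$, and the whole sum is zero. (Equivalently: the operator $\delta \mapsto \sum_\delta (-1)^{|\delta|}(\cdot)$ is an $(\ell-1)$-fold finite difference in the variables $a_1,\dots,a_{\ell-1}$, which annihilates every polynomial in these variables of total degree less than $\ell-1$, and $s_\delta^k$ is such a polynomial.)

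Combining these two observations, $L(X) = -\sum_{k \geq \ell-1} \frac{1}{kX^k}\sum_\delta(-1)^{|\delta|} s_\delta^k$, so $L(X) = O(X^{-\ell+1})$. Exponentiating gives $e^{L(X)} = 1 + O(X^{-\ell+1})$, which is exactly the product $\prod_\delta (X - s_\delta)^{(-1)^{|\delta|}}$. The only mildly delicate point is legitimizing the Taylor expansion uniformly for all $\delta$: for $X > 2\max_\delta |s_\delta|$ the series converges and the tail bound $\sum_{k \geq \ell-1} |s_\delta|^k/(kX^k) = O(X^{-\ell+1})$ is uniform in $\delta$, so there is no real analytic obstacle. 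The entire argument is driven by the inclusion–exclusion identity above; I expect that to be the only substantive step.
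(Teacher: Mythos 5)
Your proof is correct, including the two points that need care: the vanishing of the $\log X$ coefficient and the moment identity $\sum_{\delta \subseteq [\ell-1]} (-1)^{|\delta|} s_\delta^k = 0$ for $k \le \ell-2$, which you justify properly by summing over supersets of the underlying set of $(j_1,\dots,j_k)$; your handling of the convergence of the logarithmic series for $X$ large is also adequate, since the $a_j$ are fixed. The route is genuinely different from the paper's, though. The paper stays entirely within polynomial algebra: it splits the product into $R_{\text{ev}}$ and $R_{\text{odd}}$ (the sub-products over even-size and odd-size $\delta$), expands both as polynomials of degree $2^{\ell-2}$ in $X$, and matches their top $\ell-1$ coefficients by an explicit sign-reversing bijection $\delta_h \mapsto \delta_h \,\nabla\, \{j_0\}$, where $j_0$ is an index avoided by the chosen $j_1,\dots,j_m$; the conclusion then follows from $R_{\text{ev}} - R_{\text{odd}} = O(X^{2^{\ell-2}-\ell+1})$ upon dividing by $R_{\text{odd}} \sim X^{2^{\ell-2}}$. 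Your logarithmic version linearizes the problem and avoids the bookkeeping with lists of distinct subsets, at the modest cost of a convergence check; the combinatorial kernel is the same in both arguments, namely that pairing terms via an element $j_0$ outside the support kills all contributions of degree below $\ell-1$ --- equivalently, as you note, an $(\ell-1)$-fold finite difference annihilates polynomials of degree at most $\ell-2$. Either proof serves the application equally well, since the lemma is only ever invoked for fixed block sizes $a_1,\dots,a_{\ell-1}$, so the dependence of your implied constants on the $a_j$ is harmless.
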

\begin{proof}
Define $R_\ev$ (resp. $R_\odd$) as
\[\prod_\delta \left( X - \sum_{j \in \delta} a_j \right),\]
where the product runs over subsets of $[\ell-1]$ of even (resp. odd) size.
Expanding the product, one gets
\[R_\ev = \sum_{m \geq 0} \ \ 
\sum_{\delta_1,\dots,\delta_m} \ \ \sum_{j_1\in \delta_1,\dots,j_m \in \delta_m}  (-1)^m  
a_{j_1} \dots a_{j_m} X^{2^{\ell-2} - m}.\]
The index set of the second summation symbol is the set of lists of $m$ distinct
(but not necessarily disjoint) subsets of $[\ell-1]$ of even size.
Of course, a similar formula with subsets of odd size holds for $R_\odd$.

Let us fix an integer $m<\ell-1$ and a list $j_1,\dots,j_m$.
Denote $j_0$ the smallest integer in $[\ell-1]$ different from $j_1,\dots,j_m$
(as $m<\ell-1$, such an integer necessarily exists).
Then one has a bijection:
\[ \begin{array}{rcl}
\left\{ \begin{array}{c}
    \text{lists of subsets}\\
\delta_1,\dots,\delta_m \text{ of even size such}\\
\text{that, }\forall\ h\leq m, j_h \in \delta_h
\end{array}\right\}
& \to &
\left\{ \begin{array}{c}
    \text{lists of subsets}\\
\delta_1,\dots,\delta_m \text{ of odd size such}\\
\text{that, }\forall\ h\leq m, j_h \in \delta_h
\end{array}\right\}\\
(\delta_1,\dots,\delta_m)
&\mapsto&
(\delta_1 \nabla \{j_0\},\dots,\delta_m \nabla \{j_0\}),
\end{array}\]
where $\nabla$ is the symmetric difference operator.
This bijection implies that the summand $ (-1)^m  
a_{j_1} \dots a_{j_m} X^{2^{\ell-2} - m}$ appears as many times
in $R_\ev$ as in $R_\odd$.
Finally, all terms corresponding to values of $m$ smaller than $\ell-1$
cancel in the difference
$R_\ev - R_\odd$ and one has
\[R_\ev - R_\odd = O\big(X^{2^{\ell-2} - \ell +1}\big). \qedhere\]
\end{proof}

\begin{remark}
    Thanks to a result of Leonov and Shiryaev that expresses cumulants
    of products of random variables as product of cumulants
    (see \cite{LeonovShiryaevCumulants} or \cite[Theorem 4.4]{Sniady2006fluctuations}),
    it would have been enough to prove our result for $a_1=\dots=a_\ell=1$.
    But, as our proof uses an induction on $a_\ell$, 
    we have not made this choice.
\end{remark}

\begin{remark}
We would like to point out the fact that our result is
closely related to a result of P. \'Sniady.
Indeed, thanks to our multiplicative criterion to have small cumulants,
the computation in this section is equivalent to
Lemma 4.8 of paper \cite{Sniady2006fluctuations}.
However, \'Sniady's proof relies on a non trivial theory of cumulants of
observables of Young diagrams.
Therefore, it seems to us that it is worth giving an alternative argument.
%We could have given a shorter proof using a result of P. \'Sniady in \cite{Sniady2006fluctuations}. 
%Define
%\[ B^q_i = \frac{1}{(q)_{a_i}} \bm{\Sigma_{\underbrace{1,\dots,1}_{a_i \text{ times}}}},\]
%where the notation in boldface refers to \'Sniady's paper 
%(we will consider these variables in the space with the $\bullet$ product).
%As above, the normalization has been chosen such that $\esper(B^{(N)}_i) =1$ so we are 
%in the situation of Lemma \ref{LemMultiplicativeCritetion}.
%Besides, Lemma 4.8 of paper \cite{Sniady2006fluctuations}, written with our
%notation, asserts that
%\[ \kappa(B^q_1, \dots, B^q_\ell) = O(q^{-\ell+1}). \]
%Therefore the family $B$ has property {\it \ref{ItemQuasiFactorization}}
% of Lemma \ref{LemMultiplicativeCritetion}.
%But if we plug $q=N+\theta-1$ in the rational function $M^B_{[\ell]}$,
%we get exactly  $(M^A_{[\ell]})^{-1}$.
%Therefore the family $A$ also has 
%property {\it \ref{ItemQuasiFactorization}}. \qed \medskip
%
%However, this proof is not very satisfying in the following sense:
%what we wanted to prove is a identity on rational function and, to do this,
%we used a quite heavy theory of cumulants of character values.
%That's why we gave our self-contained proof,
%which is quite technical but 
%involves only computation of rational functions.
%It provides also a more direct argument for \cite[Lemma 4.8]{Sniady2006fluctuations}.
\end{remark}

\subsection{General case}
Let $A^{(N)}_1$, \dots, $A^{(N)}_\ell$ be some sequences of random variables.
We introduce some {\em truncated cumulants}:
if $\pi_0$, $\pi_1$, $\pi_2$ and so on, are set partitions of $[\ell]$, we set
\begin{align*}
    k^{(N)}_{A}(\pi_0)&=\sum_{\pi \in \PPP([\ell]) \atop \pi \geq \pi_0}
    \mu(\pi, \{[\ell]\})
    \prod_{C \in \pi} M_{A,C}^{(N)}\\
    k^{(N)}_{A}(\pi_0;\pi_1,\pi_2,\dots)&=\sum_{\pi \in \PPP([\ell]) \atop 
    {\pi \geq \pi_0 \atop \pi \nleq \pi_1,\pi_2,\dots}} \mu(\pi, \{[\ell]\})
    \prod_{C \in \pi} M_{A,C}^{(N)}
\end{align*}

In the context of Lemma \ref{LemMultiplicativeCritetion}, it is also possible
to bound the truncated cumulants.
\begin{lemma}
    Let $A^{(N)}_1$,\dots,$A^{(N)}_\ell$ be some sequences
    of random variables as in Lemma \ref{LemMultiplicativeCritetion},
    fulfilling property {\it \ref{ItemQuasiFactorization}}
    (or equivalently property {\it \ref{ItemSmallCumulants}}).
    \begin{itemize}
        \item If $\pi_0$ is a set partition of $[\ell]$,
            \[k^{(N)}_{A}(\pi_0) = O(N^{-\#(\pi_0)+1}).\]
        \item More generally, if $\pi_0;\pi_1,\pi_2,\dots$ are set partitions
            of $[\ell]$,
            \[k^{(N)}_{{A}}(\pi_0;\pi_1,\pi_2,\dots)
    = O(N^{-\#(\pi_0 \vee \pi_1 \vee \pi_2 \dots)+1}).\]
    \end{itemize}
    \label{LemCumulantsTronques}
\end{lemma}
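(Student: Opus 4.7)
The plan is to mimic the moment-expansion argument used in the proof of Lemma \ref{LemMultiplicativeCritetion}. By property \ref{ItemQuasiFactorization}, each joint moment factors as
\[M^{(N)}_{A,C} = \prod_{\delta \subseteq C,\ |\delta| \geq 2}(1+T^{(N)}_\delta), \qquad T^{(N)}_\delta = O(N^{-|\delta|+1}),\]
so substituting this into the defining sums will reduce both bullets to controlling products $T^{(N)}_{\Delta_1}\cdots T^{(N)}_{\Delta_m}$ subject to a combinatorial constraint that forces a lower bound on $\sum_i(|\Delta_i|-1)$.

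For the first bullet, expanding $\prod_{C \in \pi} M^{(N)}_{A,C}$ and exchanging summations exactly as in the derivation of \eqref{EqCumulantsSumSets} yields
\[k^{(N)}_A(\pi_0) = \sum_{\Delta_1,\dots,\Delta_m\ \text{distinct},\ |\Delta_i|\geq 2} T^{(N)}_{\Delta_1}\cdots T^{(N)}_{\Delta_m} \sum_{\substack{\pi \geq \pi_0 \\ \pi \geq \Pi(\Delta_i)\ \forall i}} \mu(\pi,\{[\ell]\}).\]
The inner Möbius sum is $0$ unless $\pi_0 \vee \bigvee_i \Pi(\Delta_i) = \{[\ell]\}$, and when it is not, \eqref{EqRkJoin} forces
\[\sum_i(|\Delta_i|-1) \;\geq\; \rk(\{[\ell]\})-\rk(\pi_0) \;=\; \#(\pi_0)-1,\]
so every surviving product is $O(N^{-\#(\pi_0)+1})$. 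This proves the first bullet.

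For the second bullet, rather than redoing the $\pi$-summation I would reduce directly to the first bullet by inclusion--exclusion. Reading ``$\pi \nleq \pi_j$'' as ``$\pi_j$ is not a refinement of $\pi$'' (i.e.\ $\pi \not\geq \pi_j$), the indicator of the constraint expands as
\[\prod_{j\geq 1}\bigl(1-\mathbf{1}[\pi\geq\pi_j]\bigr) = \sum_{S\subseteq\{1,2,\dots\}}(-1)^{|S|}\,\mathbf{1}\!\left[\pi\geq\bigvee_{j \in S}\pi_j\right],\]
which combined with $\mathbf{1}[\pi\geq\pi_0]$ gives the identity
\[k^{(N)}_A(\pi_0;\pi_1,\pi_2,\dots) = \sum_{S\subseteq\{1,2,\dots\}}(-1)^{|S|}\, k^{(N)}_A\!\left(\pi_0 \vee \bigvee_{j \in S}\pi_j\right).\]
Applying the first bullet to each term gives $O(N^{-\#(\pi_0 \vee \bigvee_{j\in S}\pi_j)+1})$, and since $\pi_0 \vee \bigvee_{j\in S}\pi_j \leq \pi_0\vee\pi_1\vee\pi_2\vee\cdots$ we have $\#(\pi_0 \vee \bigvee_{j\in S}\pi_j) \geq \#(\pi_0\vee\pi_1\vee\cdots)$; hence every term, and the finite alternating sum, is $O(N^{-\#(\pi_0\vee\pi_1\vee\cdots)+1})$.

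The main difficulty is purely combinatorial: one has to fix the precise meaning of ``$\pi\nleq\pi_j$'' so that the inclusion--exclusion identity above actually holds as stated, and then use the rank inequality \eqref{EqRkJoin} sharply enough to recover the claimed exponent. Once these two points are settled, both bullets are short adaptations of the substitution argument already carried out in the proof of Lemma \ref{LemMultiplicativeCritetion}.
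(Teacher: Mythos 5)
Your proof is correct and follows the paper's own argument essentially verbatim: the first bullet via the analogue of \eqref{EqCumulantsSumSets} combined with the rank inequality \eqref{EqRkJoin}, and the second via the same inclusion--exclusion reduction to the first bullet. Your reading of the truncation condition as $\pi \not\geq \pi_j$ is indeed the interpretation needed both for the inclusion--exclusion identity and for the subsequent application in the proof of Theorem~\ref{ThMain}.
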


\begin{proof}
    For the first statement, the proof is similar to the one of
    {\it \ref{ItemQuasiFactorization}} $\Rightarrow$ {\it \ref{ItemSmallCumulants}}
    of Lemma \ref{LemMultiplicativeCritetion}.
    One can write an analogue of equation \eqref{EqCumulantsSumSets}:
    \[
        k^{(N)}_{{A}}(\pi_0)
       = \sum_{\Delta_1,\dots,\Delta_m \atop
    \text{distinct}} T^{(N)}_{\Delta_1} \dots T^{(N)}_{\Delta_m} 
    \left( \sum_{\pi \in \PPP([\ell]) \atop 
    \pi \geq (\pi_0 \vee \pi(\Delta_1) \vee \dots)}
    \mu(\pi, \{[\ell]\}) \right).
    \]
    The same argument as above says that only terms corresponding to lists
    such that $\pi_0 \vee \pi(\Delta_1) \vee \dots={[\ell]}$ survive.
    Such lists fulfill
    \[\sum_{i=1}^m |\Delta_i|-1 \geq \rk([\ell])-\rk(\pi_0)=\#(\pi_0)-1.\]
    The first item of the Lemma follows because, by hypothesis,
    \[T^{(N)}_{\Delta_1} \dots T^{(N)}_{\Delta_m} = O(N^{-\sum_i (|\Delta_i|-1)}).\]
    For the second statement, we use the inclusion/exclusion principle:
    \[k^{(N)}_{{A}}(\pi_0;\pi_1,\dots,\pi_h) =
    \sum_{I \subseteq [h]} (-1)^I k^{\bm{A}} \left( \pi_0 \vee \left(
    \bigvee_{i \in I} \pi_i \right) \right).\]
    Then the second item follows from the first.
\end{proof}

Let us come back to the proof of Theorem \ref{ThMain}.
We fix two lists $\ii$ and $\ss$ of length $r$, as well as a set partition
$\tau$ of $r$.
We want to find a bound for 
\[\kappa \left(\prod_{j \in \tau_1} B^{(N)}_{i_j,s_j} ,\dots,
    \prod_{j \in \tau_\ell} B^{(N)}_{i_j,s_j} \right) 
    = \sum_{\pi \in \PPP([r]) \atop \pi \geq \tau} 
    \prod_{C \in \pi} \esper\left( \prod_{i \in C} B^{(N)}_{i_j,s_j} \right). \]
We split the sum according to the values
of the partitions $\pi_1 = \pi \wedge CC(G_1(\ii,\ss))$
and $\pi_2 = \pi \wedge CC(G_2(\ii,\ss))$.
More precisely,
\[\kappa \left(\prod_{j \in \tau_1} B^{(N)}_{i_j,s_j} ,\dots,                        
    \prod_{j \in \tau_\ell} B^{(N)}_{i_j,s_j} \right) =
    \sum_{\pi_1 \leq CC(G_1(\ii,\ss)) \atop \pi_2 \leq CC(G_2(\ii,\ss))}
    Y^{(N)}_{\pi_1,\pi_2},\]
where 
\[Y^{(N)}_{\pi_1,\pi_2} = \sum_{\pi \geq \tau \atop 
{\pi \wedge CC(G_1(\ii,\ss)) =\pi_1 \atop \pi \wedge CC(G_2(\ii,\ss)) =\pi_2}}
\prod_{C \in \pi} \esper\left( \prod_{i \in C} B^{(N)}_{i_j,s_j} \right).\]
We call the summation index the slice determined by $\pi_1$ and $\pi_2$.

Let us fix some partitions $\pi_1$ and $\pi_2$.
For each block $C$ of $\pi_1$, we consider some sequence of random variables
$(A^{(N)}_C)_{N \geq 1}$ such that:
for each list of distinct blocks $C_1$, \dots, $C_h$
\[\esper( A^{(N)}_{C_1} \cdots A^{(N)}_{C_h})=
\frac{1}{(N+\theta-1)(N+\theta-2)\dots(N+\theta-h)}.
\]
For readers which wonder whether such variables exist,
we refer to Remark \ref{RemUmbral}, which remains valid here.
Consider the family
\begin{equation}\label{EqFamily}
    \left( (N+\theta-1) A^{(N)}_C \right)_{C \in \pi_1}.
\end{equation}
By the same argument as in Section \ref{SubsecProofDistinct},
this family has the quasi-factorization property 
 and,
hence, its cumulants and truncated cumulants are small 
(Lemma \ref{LemMultiplicativeCritetion}).

But, if $\pi$ is in the slice determined by $\pi_1$ and $\pi_2$,
one can check easily (see the description of joint moments in Section
\ref{SubsectJointMoments})
that the corresponding product of moments is given by:
\[\prod_{C \in \pi} \esper\left( \prod_{i \in C} B^{(N)}_{i_j,s_j} \right)=
\alpha_{\pi_1,\pi_2} \prod_{C \in \pi} \esper\left( 
\prod_{C' \in \pi_1 \atop C'\subseteq C} A^{(N)}_{C'} \right),\]
where $\alpha_{\pi_1,\pi_2}$ depends only on $\pi_1$ and $\pi_2$
and is given by:
\begin{itemize}
    \item $0$ if $\pi_2$ contains in the same block two indices $j$ and $h$
        such that $i_j=i_h$ but $s_j \neq s_h$ or $s_j=s_h$ but $i_j \neq i_h$;
    \item $\theta^\gamma$ otherwise, where $\gamma$ is the number of cycles
        of the partial permutation $(\ii,\ss)$, whose indices are all contained
        in the same block of $\pi_2$.
\end{itemize}
As a consequence,
\begin{equation}\label{EqSlice}
    Y^{(N)}_{\pi_1,\pi_2} = \frac{\alpha_{\pi_1,\pi_2}}{(N+\theta-1)^{\#(\pi_1)}}
\sum_{\pi \geq \tau \atop 
{\pi \wedge CC(G_1(\ii,\ss)) =\pi_1 \atop \pi \wedge CC(G_2(\ii,\ss)) =\pi_2}}
\prod_{C \in \pi} \esper\left( 
\prod_{C' \in \pi_1 \atop C'\subseteq C} (N+\theta-1) A_{C'} \right).
\end{equation}
But the condition $\pi \wedge CC(G_1(\ii,\ss)) =\pi_1$ can be rewritten
as follows: $\pi \geq \pi_1$ and $\pi \ngeq \pi'$
for any $\pi_1 \leq \pi' \leq CC(G_1(\ii,\ss))$.
A similar rewriting can be performed for the condition 
$\pi \wedge CC(G_2(\ii,\ss)) =\pi_2$.
Finally, the sum in equation \eqref{EqSlice} above is a truncated cumulant
of the family \eqref{EqFamily} and is bounded from above by
$O(N^{-|CC(G_2(\ii,\ss)) \vee \tau|+1}).$
This implies 
\[Y^{(N)}_{\pi_1,\pi_2} = O(N^{-\#(\pi_1)-|CC(G_2(\ii,\ss)) \vee \tau| +1}),\]
which ends the proof of Theorem \ref{ThMain} because $\pi_1$ has necessarily
at least as many parts as $CC(G_1(\ii,\ss))$.\qed

\begin{remark}
So far, we have considered the lists $\ii$ and $\ss$ as fixed.
Therefore, the constant hidden in the Landau symbol $O$ may depend on these lists.
However, the quantity for which we establish an upper bound depends only
on the partition $\tau$ and on which entries of the lists $\ii$ and $\ss$ coincide.
For a fixed $r$, the number of partitions and of possible equalities is finite.
Therefore, we can choose a constant depending only on $r$,
as it is done in the statement of Theorem \ref{ThMain}.
\end{remark}

\section{Graph-theoretical lemmas}\label{SectGraph}

In this section, we present two quite easy lemmas on the number of connected
components on graph quotients.
These lemmas may already have appeared in the literature,
though the author has not been able to find a reference.
They will be useful in the next sections for applications of
Theorem \ref{ThMain}.

\subsection{Notations}
Let us consider a graph $G$ with vertex set $V$ and edge set $E$.
By definition, if $V'$ is a subset of $V$, the graph $G[V']$ {\em induced}
by $G$ on $V'$ has vertex set $V'$ and edge set $E[V']$,
where $E[V']$ is the subset of $E$ consisting of
edges having both their extremities in $V'$.

Let $f$ be a surjective map from $V$ to another set $W$.
Then the {\em quotient} of $G$ by $f$ is the graph $G/f$ with vertex set $W$
and which has an edge between $w$ and $w'$ if, in $G$,
there is at least one edge between a vertex of $f^{-1}(w)$ and
a vertex of $f^{-1}(w')$.

{\em Example.} Consider the graph $G$ on the top of figure \ref{FigExGraph}.
Its vertex set is the $10$-element set $V=\{1,2,3,4,5,\bar{1},\bar{2},\bar{3},\bar{4},\bar{5}\}$.
Consider the application $f$ from $V$ to the set $ W=\{1,2,3,4,5\}$, consisting
in forgetting the bar (if any).
The contracted graph $G/f$ is drawn on the bottom left picture of
Figure \ref{FigExGraph}.
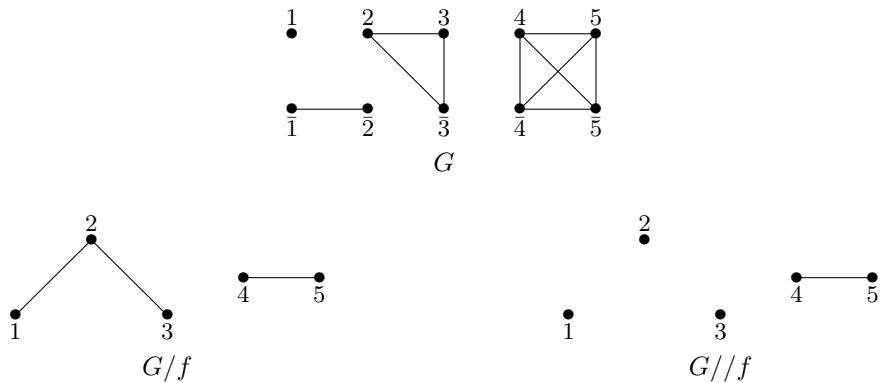
\begin{figure}
\begin{center}
     \begin{tikzpicture}
     	\foreach \i in {1,...,5} {\draw (\i,0) node {$\bullet$} node[anchor=south] {{\small $\i$}};}
     	\foreach \i in {1,...,5} {\draw (\i,-1) node {$\bullet$} node[anchor=north] {{\small $\bar{\i}$}};}
	\draw (3,0) -- (2,0) -- (3,-1) -- cycle;
	\draw (5,0) -- (4,0) -- (4,-1) -- (5,-1) -- (5,0) -- (4,-1);
    \draw (5,-1) -- (4,0);
    \draw (1,-1) -- (2,-1);
	\draw (3,-1.7) node {$G$};
     \end{tikzpicture}
\end{center}
\begin{minipage}{.48\linewidth}
\begin{center}
     \begin{tikzpicture}
	\draw (0,0) node {$\bullet$} node[anchor=north] {\small $1$} 
	-- (1,1) node {$\bullet$} node[anchor=south] {\small $2$}
	-- (2,0) node {$\bullet$} node[anchor=north] {\small $3$};
	\draw (3,.5) node {$\bullet$} node[anchor=north] {\small $4$} 
	-- (4,.5) node {$\bullet$} node[anchor=north] {\small $5$} ;
	\draw (2, -.7) node {$G/f$};
     \end{tikzpicture}
\end{center}
\end{minipage} \hfill
\begin{minipage}{.48\linewidth}
\begin{center}
     \begin{tikzpicture}
	\draw (0,0) node {$\bullet$} node[anchor=north] {\small $1$} ;
	\draw (1,1) node {$\bullet$} node[anchor=south] {\small $2$} ;
	\draw (2,0) node {$\bullet$} node[anchor=north] {\small $3$} ;
	\draw (3,.5) node {$\bullet$} node[anchor=north] {\small $4$} 
	-- (4,.5) node {$\bullet$} node[anchor=north] {\small $5$} ;
	\draw (2, -.7) node {$G//f$};
     \end{tikzpicture}
\end{center}
\end{minipage} 
\caption{An example of a graph, its quotient and strong quotient.}
\label{FigExGraph}
\end{figure}

\subsection{Connected components of quotients}

\begin{lemma}
    Let $G$ be a graph with vertex set $V$ and $f$ a surjective map
    from $V$ to another set $W$.
    Then
    \[ \#(\Conn(G)) \leq \#(\Conn(G/f)) +
        \sum_{w \in W} (\#(\Conn(G[f^{-1}(w)])) - 1).\]
    \label{LemContractionFibers}
\end{lemma}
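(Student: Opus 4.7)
The plan is to bound $\#(\Conn(G))$ by introducing an intermediate ``fiber-component'' graph that sits between $G$ and $G/f$, and then to compare connected component counts through a vertex-identification argument.

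First I would set $G_0 := \bigsqcup_{w \in W} G[f^{-1}(w)]$, the subgraph of $G$ obtained by keeping only edges whose endpoints lie in the same fiber. I then define a new graph $H$: its vertex set is $\Conn(G_0) = \bigsqcup_{w \in W} \Conn(G[f^{-1}(w)])$ (so its vertices are the fiber-components), and two fiber-components $X, Y$ are joined by an edge of $H$ precisely when some edge of $G$ has one endpoint in $X$ and the other in $Y$ (this automatically means $X$ and $Y$ lie in distinct fibers). The natural surjection $V \to V(H)$ sending each $v$ to its fiber-component is compatible with edges in both directions, so it identifies connected components; hence $\#(\Conn(H)) = \#(\Conn(G))$.

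Next I would observe that the map $g \colon V(H) \to W$ sending a fiber-component contained in $f^{-1}(w)$ to $w$ is surjective, and that the quotient $H/g$ coincides with $G/f$ (up to self-loops, which do not affect connectivity): an edge between $w$ and $w'$ exists in $G/f$ if and only if some edge of $G$ crosses between the fibers, if and only if some edge of $H$ joins a fiber-component over $w$ to one over $w'$. Set $N := |V(H)| = \sum_{w \in W} \#(\Conn(G[f^{-1}(w)]))$; the quotient $H/g$ has $|W|$ vertices, so passing from $H$ to $G/f$ amounts to performing exactly
\[ N - |W| \;=\; \sum_{w \in W} \bigl( \#(\Conn(G[f^{-1}(w)])) - 1 \bigr) \]
vertex-identifications, one fiber at a time.

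The key observation is now elementary: each single vertex-identification either merges two distinct connected components into one (reducing the component count by exactly one) or merges two vertices already in the same component (leaving it unchanged). Consequently, the component count decreases by at most one per identification, so
\[ \#(\Conn(H)) - \#(\Conn(G/f)) \;\leq\; N - |W|, \]
and substituting $\#(\Conn(H)) = \#(\Conn(G))$ yields the claimed inequality. I do not expect any serious obstacle: the one point requiring mild care is the verification that $H/g$ really is $G/f$ and that stray self-loops or multi-edges arising under contraction do not affect the connectivity count — but since connected components are insensitive to both, this is harmless.
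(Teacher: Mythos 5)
Your proof is correct, and it takes a genuinely different route from the paper's. The paper lifts spanning forests of $G/f$ and of each induced graph $G[f^{-1}(w)]$ to a single edge set $F$ of $G$, checks that this union is acyclic (the one delicate point), and concludes from $\#(\Conn(G)) \leq |V| - |F|$. You instead factor the quotient through the intermediate graph $H$ on fiber-components: the identity $\#(\Conn(H)) = \#(\Conn(G))$ holds because each fiber-component is connected in $G$ and every edge of $H$ lifts to an edge of $G$ (and, as you note, every edge of $H$ necessarily joins components over distinct fibers, so no spurious identifications occur within a fiber), and then you count the $\sum_{w \in W} \bigl(\#(\Conn(G[f^{-1}(w)])) - 1\bigr)$ vertex-identifications needed to reach $G/f$, each of which lowers the component count by at most one. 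Your version trades the acyclicity verification for the elementary observation that a single identification merges at most two components, which is arguably cleaner and more self-contained; the paper's forest construction, on the other hand, is the same device that is reused almost verbatim in the proof of Lemma \ref{LemGraphFibers2}, so it buys uniformity between the two graph-theoretical lemmas. Both arguments are fully rigorous.
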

\begin{proof}
    For each edge $(w,w')$ in $G/f$, we choose arbitrarily an edge $(v,v')$
    in $G$ such that $f(v)=w$ and $f(v')=w'$ (by definition of $G/f$, such an
    edge exists but is not necessarily unique).
    Thereby, to each edge of $G/f$ or of $G[f^{-1}(w)]$ (for any $w$ in $W$)
    corresponds canonically an edge in $G$.

    Take spanning forests $F_{G/f}$ and $(F_w)_{w \in W}$
    of graphs $G/f$ and $G[f^{-1}(w)]$ for $w \in W$.
    With the remark above, to each spanning forest corresponds a set of edges
    in $G$.
    Consider the union $F$ of these sets.
    It is an acyclic set of edges of $G$.
    Indeed, if it contained a cycle, it must
    be contained in one of the fibers $f^{-1}(w)$,
    otherwise it would induce a cycle in $F_{G/f}$.
    But, in this case, all edges of the cycles belong to $F_w$,
    which is impossible, since $F_w$ is a forest.

    Finally, $F$ is an acyclic set of edges in $G$ and
    \begin{multline*}
        \#(\Conn(G)) \leq |V| - |F| = |W| - |F_{G/f}| + 
    \sum_{w \in W} (|f^{-1}(W)| - 1 - |F_w|) \\ \leq \#(\Conn(G/f)) +               
        \sum_{w \in W} (\#(\Conn(G[f^{-1}(w)])) - 1). \qedhere
    \end{multline*}
\end{proof}

{\em Continuing the example.} All fibers $f^{-1}(i)$ (for $i=1,2,3,4,5$)
are of size $2$.
Three of them contains one edge (for $i=3,4,5$) and hence are connected,
while the other two have two connected components.
Finally, the sum in the lemma is equal to $2$, which is equal to the difference
\[ \#(\Conn(G)) - \#(\Conn(G/f)) = 4 - 2 = 2.\]

\subsection{Fibers of size 2}
\label{SubsectFibers2}

In this section, we further assume that
$V = W \sqcup W$ and that $f$ is the canonical application
$W \sqcup W \to W$ consisting in forgetting to which copy of $W$
the element belongs.
Throughout the paper, for simplicity of notation,
we will use overlined letters for elements
of the second copy of $W$.

In this context, in addition to 
the quotient $G/f$, one can consider another graph
with vertex set $W$.
By definition, $G//f$ has an edge between $w$ and $w'$
if, in $G$, there is an edge between $w$ and $w'$
{\em and} an edge between $\bar{w}$ and $\bar{w'}$.
We call this graph the {\em strong quotient} of $G$.

{\em Continuing the example.} The graph $G$ and the function
$f$ in the example above fit in the context described in this section.
The strong quotient $G//f$ is drawn on Figure \ref{FigExGraph}
(bottom right picture).

\begin{lemma}
Let $G$ and $f$ be as above.
Then
\[ \#(\Conn(G)) \leq \#(\Conn(G/f)) + \#(\Conn(G//f)).\]
    \label{LemGraphFibers2}
\end{lemma}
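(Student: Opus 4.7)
My plan is to exhibit an explicit acyclic subset of $E(G)$ of size $(|W|-\#(\Conn(G/f))) + (|W|-\#(\Conn(G//f)))$; since any acyclic subset of $E(G)$ has at most $|V|-\#(\Conn(G))=2|W|-\#(\Conn(G))$ edges, producing such a subset immediately yields the desired inequality.

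The construction goes in two steps. First pick a spanning forest $T_2$ of $G//f$ (so $|T_2|=|W|-\#(\Conn(G//f))$). Since $E(G//f)\subseteq E(G/f)$, $T_2$ is also a forest in $G/f$, and I extend it greedily to a spanning forest $T_1$ of $G/f$ with $T_2\subseteq T_1$ (so $|T_1|=|W|-\#(\Conn(G/f))$). Now lift these forests to $G$: let $\widetilde T_2 := \{ww', \bar w\bar{w'} : ww'\in T_2\}$, which consists of $2|T_2|$ edges of $G$ by the very definition of $G//f$, and let $\widetilde T_1$ consist of one $G$-edge above each edge of $T_1$, where for $e=ww'\in T_2$ I insist on picking specifically the upper copy $ww'$. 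Then $\widetilde T_1\cap\widetilde T_2$ is exactly the set of upper copies of $T_2$-edges, and
\[ |\widetilde T_1\cup\widetilde T_2| = |T_1|+2|T_2|-|T_2| = |T_1|+|T_2|, \]
which is precisely the target size.

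The main step, and the principal obstacle, is to show that $\widetilde T_1\cup\widetilde T_2$ is acyclic in $G$. Suppose for contradiction it contains a cycle $v_1v_2\cdots v_rv_1$. Since $T_2\subseteq T_1$, every edge of $\widetilde T_1\cup\widetilde T_2$ projects under $f$ to an edge of $T_1$, so the cycle projects to a closed walk in the forest $T_1$; any such closed walk must backtrack, giving an index $i$ with $f(v_{i-1})=f(v_{i+1})$. Since the $v_j$ are pairwise distinct, this forces $\{v_{i-1},v_{i+1}\}=\{w,\bar w\}$ for some $w\in W$, and the two distinct edges $v_{i-1}v_i$ and $v_iv_{i+1}$ are then both lifts of the same $T_1$-edge $e$ sharing the vertex $v_i$. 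But the alignment of the lifts guarantees that above any $e\in T_1$, the set $\widetilde T_1\cup\widetilde T_2$ contains either a single edge (when $e\in T_1\setminus T_2$) or exactly the two vertex-disjoint edges $\{e^{\text{up}},e^{\text{down}}\}$ (when $e\in T_2$). Either case contradicts the existence of two lifts of $e$ meeting at $v_i$, which proves the acyclicity and hence the lemma.
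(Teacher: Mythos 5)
Your proof is correct and takes essentially the same route as the paper's: a spanning forest of the strong quotient extended to one of the ordinary quotient, lifted (doubly for strong-quotient edges, singly otherwise) to an acyclic edge set of size $|T_1|+|T_2|$ in $G$, with acyclicity resting on the fact that the two lifts of a strong-quotient edge are vertex-disjoint. The only difference is presentational: the paper argues that the projected cycle cannot repeat an edge consecutively and therefore contains a genuine cycle of the forest $T_1$, whereas you argue that the projected closed walk must backtrack and derive the contradiction at the backtracking vertex --- the same dichotomy read in the opposite direction.
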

\begin{proof}
    Set $G_1=G//f$, $G_2=G/f$.

    By definition, an edge in $G_1$ between $j$ and $k$
    corresponds to two edges in $G$.
    In contrast, an edge $(i,j)$ in $G_2$ corresponds
    to at least one edge in $G$.

    Consider a spanning forest $F_1$ in $G_1$.
    As the set of edges of $G_1$ is smaller than the one of $G_2$,
    $F_1$ can be completed into a spanning forest $F_2$ of $G_2$.
    We consider the subset $F$ of edges of $G$ obtained as follows:
    for each edge of $F_1$, we take the two corresponding edges in $G$
    and for each edge of $F_2 \backslash F_1$, we take the corresponding edge
    in $G$ (if there is several corresponding edges, choose one arbitrarily).
    \medskip

    We will prove by contradiction that $F$ is acyclic.
    Suppose that $F$ contains a cycle $C$.
    Each edge of $C$ projects on an edge in $F_2$ and thus the 
    projection of $C$ is a list $S=(e_1,\dots,e_h)$
    of {\em consecutive} edges in $F_2$
    ({\em consecutive} means that we can orient the edges so that,
    for each $\ell \in [h]$,
    the end point of $e_\ell$ is the starting point of $e_{\ell+1}$,
    with the convention $e_{h+1}=e_1$).
    This list is not necessarily a cycle because it can contain
    twice the same edges (either in the same direction or in different directions).
    Indeed, $F$ contains some pairs of edges of the form 
    \[ \big( \{w,w'\} , \{\overline{w},\overline{w'}\} \big) \]
    which project on the same edge in $G_2$.
    But as edges from these pairs have no extremities in common, they can
    not appear consecutively in the cycle $C$.
    Therefore, the same edge can not appear twice in a row in the list $S$.
    This implies that the list $S$ contains a cycle $C_2$ as a factor.
    We have reached a contradiction as the edges in $C_2$ are edges of 
    the forest $F_2$.
    Thus $F$ is acyclic.
    \medskip

    The number of edges in $F$ is clearly $2|F_1|+|F_2 \setminus F_1|=|F_1|+|F_2|$.
    Therefore
    \[
        \#(\Conn(G)) \leq 2|W| - |F| = (|W| - |F_1|) + (|W| - |F_2|) 
    = \#(\Conn(G_1)) + \#(\Conn(G_2)). \qedhere
    \]
\end{proof}

\section{Toy example: number of cycles of a given length $p$}   \label{SectCycles}
In this section, we are interested in the number $\Gamma_p^{(N)}$ of cycles of length $p$
in a random Ewens permutation of size $N$.
The asymptotic behavior of $\Gamma_p^{(N)}$ is easy to determine (see Theorem \ref{ThmCycles}),
as its generating series is explicit and quite simple.
We will give another proof which relies on Theorem \ref{ThMain} and
does not use an explicit expression for the generating series of $\Gamma_p^{(N)}$.

The main steps of the proof are the same in the other examples,
so let us emphasize them here.\medskip

{\em Step 1: expand the cumulants of the considered statistic.}

In this step, one has to express the statistic we are interested in
using the variables $B^{(N)}_{i,s}$: here,
\[\Gamma_p^{(N)} = \sum_{1 \leq i_1 < i_2,i_3,\dots,i_p \leq N} \Bc_{(i_1,\dots,i_p)},\]
where $\Bc_{(i_1,\dots,i_p)}=B^{(N)}_{i_1,i_2} \dots B^{(N)}_{i_{p-1}, i_p} B^{(N)}_{i_p,i_1}$
is the indicator function of the event ``$(i_1,\dots,i_p)$ is a cycle of $\sigma$''.
Therefore, one has
\begin{equation}\label{EqDevCumCp}
    \kappa_\ell(\Gamma_p^{(N)}) = \sum_{ {i^1_1 < i^1_2,i^1_3,\dots,i^1_p \atop \vdots}
\atop i^\ell_1 < i^\ell_2,i^\ell_3,\dots,i^\ell_p}
\kappa\big( B^{(N)}_{i^1_1,i^1_2} \dots B^{(N)}_{i^1_p,i^1_1}, \cdots, 
 B^{(N)}_{i^\ell_1,i^\ell_2} \dots B^{(N)}_{i^\ell_p,i^\ell_1} \big).
 \end{equation}

{\em Step 2: Give an upper bound for the elementary cumulants.}

Now, we would like to apply our main lemma to every summand of equation 
\eqref{EqDevCumCp}.
To this purpose, one has to understand what is the exponent of $N$ in the upper bound
given by Theorem \ref{ThMain}.

For a matrix 
\[(i_j^r)_{1 \leq j \leq p \atop 1 \leq r \leq \ell},\]
we denote:
\begin{itemize}
 \item $M(\ii)=|\{(i^r_j,i^r_{j+1}); 1 \leq j \leq p, 1 \leq r \leq \ell\}|$
     the number of different entries in the matrix of pairs $(i^r_j,i^r_{j+1})$
      (by convention, $i^r_{p+1}=i^r_1$);
 \item $Q(\ii)$ the number of connected components of the graph 
     $G(\ii)$ on $[\ell]$
     where $r_1$ is linked with $r_2$ if
     \[ \{i_j^{r_1}; 1\leq j \leq p\} \cap \{i_j^{r_2}; 1\leq j \leq p\}
     \neq \emptyset;\]
 \item $t(\ii)$ the number of distinct entries.
\end{itemize}
Clearly, $M(\ii)$ is always at least equal to $t(\ii)$.
In the case where $\tau$ has $\ell$ blocks of size $p$ and where the list $\ss$ is obtained
by a cyclic rotation of the list $\ii$ in each block, Theorem \ref{ThMain} writes as:
\begin{multline}\label{EqBoundCumCp}
    \big| \kappa\big( B^{(N)}_{i^1_1,i^1_2} \dots B^{(N)}_{i^1_p,i^1_1}, \cdots,  
B^{(N)}_{i^\ell_1,i^\ell_2} \dots B^{(N)}_{i^\ell_p,i^\ell_1} \big) \big| \leq C_{p\ell}
N^{-M(\ii)-Q(\ii)+1} \\
\leq C_{p\ell} N^{-M(\ii)}
\leq C_{p\ell} N^{-t(\ii)}.
\end{multline}

{\em Step 3: give an upper bound for the number of lists.}

As the number of summands in Equation~\eqref{EqDevCumCp} depends on $N$, 
we can not use directly inequality~\eqref{EqBoundCumCp}.
We need a bound on the number of matrices $\ii$ with a given value of $t(\ii)$.

This bound comes from the following simple lemma:

\begin{lemma}
    For each $L \geq 1$, there exists a constant $C'_L$ with
     the following property.
    For any $N \geq 1$ and $t \in [L]$, the number of lists
    $\ii$ of length $L$ with entries in $[N]$ such that
    \[|\{i_1,\dots,i_L\}|=t\]
    is bounded from above by
    $C'_L N^t$.
    \label{LemNumberLists}
\end{lemma}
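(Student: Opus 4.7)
The plan is to parametrize each list by its \emph{kernel}, that is, the set partition $\pi$ of $[L]$ whose blocks are the fibers $\{j \in [L] : i_j = v\}$ for each value $v \in \{i_1,\dots,i_L\}$. Giving a list $\ii$ with $|\{i_1,\dots,i_L\}| = t$ is the same data as giving a pair $(\pi, \varphi)$, where $\pi \in \PPP([L])$ has exactly $t$ blocks and $\varphi$ is an injection from the set of blocks of $\pi$ into $[N]$.

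The number of such set partitions $\pi$ is the Stirling number of the second kind $S(L,t)$, which depends only on $L$ (not on $N$). For each fixed $\pi$, the number of injections from its $t$ blocks into $[N]$ equals the falling factorial $(N)_t = N(N-1)\cdots(N-t+1) \leq N^t$. Multiplying,
\begin{equation*}
\#\{\ii \in [N]^L : |\{i_1,\dots,i_L\}| = t\} = S(L,t) \cdot (N)_t \leq S(L,t)\, N^t.
\end{equation*}

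It then suffices to set $C'_L := \max_{1 \leq t \leq L} S(L,t)$ (or, more crudely, the Bell number $B_L = \sum_{t=1}^L S(L,t)$), which depends only on $L$, and the announced bound follows. There is no real obstacle: the statement is a purely combinatorial counting fact, independent of any probabilistic or cumulant machinery developed earlier in the paper.
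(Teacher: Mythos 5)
Your proof is correct and follows essentially the same route as the paper's: fix the set partition of $[L]$ recording which indices share a value, then bound the number of ways to assign distinct values in $[N]$ to the blocks. (Your count $(N)_t$ for the injections is in fact slightly more careful than the paper's, which writes $\binom{N}{t}$, but both are at most $N^t$ and the constant is the same Bell-number bound.)
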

\begin{proof}
    If we specify which indices correspond to entries with the same values
    (that is a set partition in $t$ blocks of the set of indices),
    the number of corresponding
    lists is $\binom{N}{t}$ and hence is bounded from above by $N^t$.
    This implies the lemma, with $C'_L$ being equal to 
    the number of set partitions of $[L]$.
\end{proof}

{\em Step 4: conclude.}

By inequality~\eqref{EqBoundCumCp} and Lemma \ref{LemNumberLists},
for each $t \in [p \cdot \ell]$,
the contribution of lists $(i_j^r)$ taking exactly $t$ different values is bounded from above
by $C'_{p \ell} C_{p \ell}$ and hence
\[ \text{for all }\ell \geq 1, \kappa_\ell(\Gamma_p^{(N)})=O(1).\]

To compute the component of order $1$, let us make the following remark:
by the argument above, the total contribution of lists $(i_j^r)$
with $M(\ii) > t(\ii)$ or $Q(\ii)>1$ is $O(N^{-1})$.

But $M(\ii)=t(\ii)$ implies that, as soon as 
\[\{i_j^{r_1}; 1\leq j \leq p\} \cap \{i_j^{r_2}; 1\leq j \leq p\}
\neq \emptyset,\]
the cyclic words $(i^{r_1}_1,\dots,i^{r_1}_p)$ and $(i^{r_2}_1,\dots,i^{r_2}_p)$
are equal.
As $i^r_1$ is always the minimum of the $i^r_j$, the two words are 
in fact always equal in this case.
In particular $G(\ii)$ is a disjoint union of cliques.
If we further assume $Q(\ii)=1$, {\it i.e.} $G(\ii)$ is connected,
then $G(\ii)$ is the complete graph and we get that $i_j^r$ does not depend on $r$.

Finally
\begin{equation}\label{EqCumCp}
    \kappa_\ell(\Gamma_p^{(N)}) = \sum_{i_1 < i_2,i_3,\dots,i_p}
 \kappa_\ell \big(B^{(N)}_{i_1,i_2} \dots B^{(N)}_{i_p,i_1}\big) + O(N^{-1}).
 \end{equation}
 But each $B^{(N)}_{i_1,i_2} \dots B^{(N)}_{i_p,i_1}$ is a Bernoulli variable
 with parameter $\theta/(N+\theta-1)_p$.
 Therefore their moments are all equal to $\theta/(N+\theta-1)_p$ and by 
 formula~\eqref{EqMoment2Cumulant}, their cumulants are $\theta/(N+\theta-1)_p + O(N^{-2p})$.
 Finally, as there are $(N)_p/p$ terms in equation~\eqref{EqCumCp},
 \[\kappa_\ell(\Gamma_p^{(N)}) = \frac{\theta}{p} + O(N^{-1}),\]
 which implies that $\Gamma_p^{(N)}$ converges in distribution towards
 a Poisson law with parameter $\frac{\theta}{p}$.
 
 Moreover, a simple adaptation of the proof of Equation~\eqref{EqCumCp} implies
 that
 \[\kappa(\Gamma_{p_1}^{(N)},\dots,\Gamma_{p_\ell}^{(N)})= O(N^{-1})\]
 as soon as two of the $p_r$'s are different.
 Indeed, no matrices $(i_j^r)_{1 \leq r \leq \ell \atop 1 \leq j \leq p_r}$
 with rows of different sizes fulfill simultaneously $M(\ii)=t(\ii)$ and $Q(\ii)=1$.
Finally, for any $p\geq 1$, the vector $(\Gamma_1^{(N)},\dots,\Gamma_p^{(N)})$
tends in distribution towards a vector $(P_1,\dots,P_p)$
where the $P_i$ are independent Poisson-distributed random variables
with respective parameters $\theta/i$.\qed

\section{Number of exceedances}   \label{SectExc}

In this section, we look at our second motivating problem,
the number of excee\-dances in random Ewens permutations.
The first two subsections make a link between a physical statistics model
and this problem, justifying our work.
The last two subsections are devoted to the proof of Theorem \ref{ThmExc} and
related results.

\subsection{Symmetric simple exclusion process}
\label{SubsectSSEP}
The symmetric simple exclusion process ({\em SSEP} for short) is a model of statistical
physics:
we consider particles on a discrete line with $N$ sites.
No two particles can be in the same site at the same moment.
The system evolves as follows:
\begin{itemize}
    \item if its neighboring site is empty, a particle can jump to its left or
        its right with probability $\frac{1}{N+1}$;
    \item if the left-most site is empty (resp. occupied), a particle can enter (resp. leave)
        from the left with probability $\frac{\alpha}{N+1}$ (resp. $\frac{\gamma}{N+1}$);
    \item if the right-most site is empty (resp. occupied), a particle can enter (resp. leave)
        from the right with probability $\frac{\delta}{N+1}$ (resp. $\frac{\beta}{N+1}$);
    \item with the remaining probability
        (we suppose $\alpha,\beta,\gamma,\delta <1$ so that,
        in a given state, the sum of the probabilities of the events which may occur
        is smaller than $1$), nothing happens.
\end{itemize}
Mathematically, this defines an irreducible aperiodic Markov chain on the finite set
$\{0;1\}^N$ (a state of the {\em SSEP} can be encoded as a word in $0$ and $1$ of length $N$,
where the entries with value $1$ correspond to the positions of the occupied sites).

This model is quite popular among physicists because, despite its simplicity,
it exhibits interesting phenomenons like the existence of different phases.
For a comprehensive introduction on the subject and a survey of results,
see \cite{DerridaSurveyPASEP}.

A good way to describe a state $\tau$ of the {\em SSEP} is the function
$F^{(N)}_\tau$ defined as follows: when $Nx$ is an integer,
\[F^{(N)}_\tau(x)=\frac{1}{N} \cdot \sum_{i=1}^{Nx} \tau_i \]
and, for each $i \in [N]$, the function $F^{(N)}_\tau$ is affine between
$(i-1)/N$ and $i/N$.
One should see $F^{(N)}_\tau$ as the integral of the density of particles in the system.

We are interested in the steady state (or stationary distribution) of the {\em SSEP},
that is the unique probability measure $\mu_N$ on $\{0;1\}^N$,
which is invariant by the dynamics.
More precisely, we want to study asymptotically the properties of the random function
$F^{(N)}_\tau$,
where $\tau$ is distributed with $\mu_N$ and $N$ tends to infinity.

\subsection{Link with permutation tableaux and Ewens measure}
\label{SubsectPermTableaux}
From now on, we restrict to the case $\alpha=1$, $\gamma,\delta=0$.
In this case, thanks to a result of S. Corteel and L. Williams
\cite{CorteelWilliamsPASEPPermTableaux},
the measure $\mu_{N}$ is related to some combinatorial objects,
called permutation tableaux.

The latter are fillings of Young diagrams (which can have empty rows,
but no empty columns) with $0$ and $1$ respecting some rules, the details 
of which will not be important here.
The Young diagram is called the shape of the permutation tableau.
The size of a permutation tableau is its number of rows, plus its number
of columns (and not the number of boxes!).

In addition with their link with statistical physics, permutation tableaux 
also appear in algebraic geometry: they index the cells of some canonical
decomposition of the totally positive part of the Grassmannian
\cite{PostnikovGrassmannian,WilliamsGrassmann}.
They have also been widely studied from a purely combinatorial point of view
\cite{SteingrimssonWilliamsPermutationTableaux,CorteelNadeauBijectionsPermTab,TLT}.

To a permutation tableau $T$ of size $N+1$, one can associate a word $w^T$ in $\{0;1\}^N$
as follows:
we label the steps of the border of the tableau starting from the North-East corner
to the South-West corner.
The first step is always a South step.
For the other steps, we set $w^T_i=1$ if and only if the $i+1$-th step is a south step.
Clearly, the word $w^T$ depends only on the shape of the tableau $T$.
This procedure is illustrated on figure \ref{FigFromTableauxToState}.

\begin{figure}[t]
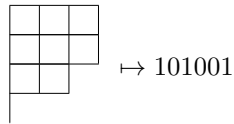

    \begin{center}
        $\begin{array}{c} \yng(3,3,2,0)\end{array} \mapsto 101001 $
    \end{center}
    \caption{From the shape of a permutation tableau to a word in $\{0;1\}^{N-1}$.}
    \label{FigFromTableauxToState}
\end{figure}

With this definition, the border of a tableau $T$ of size $N+1$
is the parametric broken line
\[ \big\{\big(n_1(w^T) - N F^{(N)}_{w^T}(x), - N (x - F^{(N)}_{w^T}(x)) -1 \big) : x \in [0;1] \big\},\]
where $n_1(w^T)$ is the number of $1$ in $w^T$ and $F^{(N)}_{w^T}$ the function
associated to the word $w^T$ as defined in the previous section.
Hence, $F^{(N)}_{w^T}$ is a good way to encode the shape of the permutation tableau $T$.

S. Corteel and L. Williams also introduced a statistics on permutation
tableaux called {\em number of unrestricted rows} and denoted $u(T)$.
If $\beta$ is a positive real parameter, this statistics induces a measure
$\mu^T_N(\beta)$ on permutation tableaux of size $N$,
for which the probability to pick a tableau $T$ is proportional       
to $\beta^{-u(T)}$.                             
This measure is related to the {\em SSEP} by the following result 
(which is in fact a particular case of
\cite[Theorem 3.1]{CorteelWilliamsPASEPPermTableaux}
but we do not know how to deal with the extra parameters there).
\begin{theorem}{\cite{CorteelWilliamsPASEPPermTableaux}}
    The steady state of the {\em SSEP} $\mu_N$ is
    the push-forward by the application $T \mapsto w_T$
    of the probability measure $\mu^T_{N+1}(\beta)$.
    \label{ThmPasepPermTab}
\end{theorem}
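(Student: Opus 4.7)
The plan is to exploit the uniqueness of the stationary distribution: since the SSEP defines an irreducible aperiodic Markov chain on the finite state space $\{0,1\}^N$, it has a single stationary measure $\mu_N$, so it is enough to show that the push-forward of $\mu^T_{N+1}(\beta)$ along $T \mapsto w_T$ is stationary for SSEP. I would do this by lifting the SSEP dynamics to a Markov chain on permutation tableaux of size $N+1$, for which $\mu^T_{N+1}(\beta)$ is invariant and whose projection under $T \mapsto w_T$ coincides with the SSEP.

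Concretely, for each SSEP elementary transition $w \to w'$ (entry on the left at rate $1/(N+1)$, exit on the right at rate $\beta/(N+1)$, or a nearest-neighbour swap at rate $1/(N+1)$), I would enumerate the local modifications of a tableau $T$ with $w_T = w$ producing a valid tableau $T'$ with $w_{T'} = w'$. Typically these are additions or removals of a row or a column, or local flips of $0$/$1$ entries along the staircase bordering the diagram; I would assign rates to these moves so that their total at $T$ matches the prescribed SSEP rate at $w_T$. To then check that $\mu^T_{N+1}(\beta)$ is stationary for the lifted chain, I would verify the balance equation at each tableau, which reduces to a combinatorial identity comparing the statistic $u(T)$ for neighbouring tableaux, weighted by the corresponding powers of $\beta^{-1}$. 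The specialisation $\alpha=1$, $\gamma=\delta=0$ simplifies the picture: there is no exit on the left and no entry on the right, which considerably prunes the list of allowed local moves on the tableau.

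The main obstacle is the combinatorial bookkeeping of how $u(T)$ changes under each local modification, especially along the leftmost column where the definition of ``unrestricted row'' is most sensitive. Once this dependence is tabulated, matching the inflow and outflow of $\mu^T_{N+1}(\beta)$-mass at each $T$ reduces to a finite case analysis. Since the whole construction is a transparent specialization of \cite{CorteelWilliamsPASEPPermTableaux} to the case at hand, an alternative (and shorter) route would be to quote their general theorem and simply verify that, upon setting the extra parameters to the specific values $\alpha=1$, $\gamma=\delta=0$, the weights appearing in \cite{CorteelWilliamsPASEPPermTableaux} reduce exactly to $\beta^{-u(T)}$; this avoids reconstructing the Markov chain by hand but obscures the self-contained combinatorial content of the statement.
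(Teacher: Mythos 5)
The first thing to say is that the paper does not prove this statement at all: it is quoted from \cite{CorteelWilliamsPASEPPermTableaux}, and the surrounding text explicitly describes it as a particular case of Theorem 3.1 of that paper (with the remark that the author does not know how to handle the extra parameters of the general version). So your second, ``shorter'' route --- citing the general Corteel--Williams theorem and checking that the specialization $\alpha=1$, $\gamma=\delta=0$ turns their tableau weights into $\beta^{-u(T)}$ --- is precisely what the paper does, and is the appropriate level of detail here.

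Your primary route, by contrast, is a genuinely different and more ambitious plan, and as written it has a real gap. Everything that would constitute the proof is deferred: the list of admissible local moves on tableaux, the tabulation of how $u(T)$ changes under each move (which you yourself flag as the main obstacle), and the verification of the balance equations. Beyond the bookkeeping, two structural points need care. First, for the push-forward argument to work you need the lifted chain to be lumpable onto the SSEP, i.e.\ the total rate from $T$ into $\{T' : w^{T'}=w'\}$ must depend on $T$ only through $w^T$; you state this as a design goal but give no reason such a lift exists. Second, it is worth knowing that Corteel and Williams did \emph{not} prove their theorem this way: their argument goes through the matrix ansatz of Derrida et al., showing that the generating polynomials of tableaux of a given shape satisfy the ansatz relations; the construction of an explicit Markov chain on combinatorial objects projecting to the (P)ASEP is the subject of separate, nontrivial work. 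So the lifted-chain strategy is plausible but unproven as it stands, and certainly not a ``transparent specialization'' of the cited paper.
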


It turns out that this measure can also be described using random permutations.
Indeed, S. Corteel and P. Nadeau \cite[Theorem 1 and Section 3]{CorteelNadeauBijectionsPermTab}
have exhibited a simple bijection $\Phi$ between permutations
of $N+1$ and permutation tableaux of size $N+1$, which satisfies:
\begin{itemize}
    \item If a permutation $\sigma$ is mapped to a tableau $T=\Phi(\sigma)$, then:
        \[w^T = (\delta_2(\sigma),\delta_3(\sigma),\dots, \delta_{N+1}(\sigma)),\]
        where $\delta_i=1$ if $i$ is an ascent, that is if $\sigma(i) < \sigma(i+1)$
        (by convention $\delta_{\sigma(N+1)}(\sigma)=1$).
    \item The number of unrestricted rows of a tableau $T=\Phi(\sigma)$ is the number of
        right-to-left minima of $\sigma$: recall that $i$ is
        a right-to-left minimum of $\sigma$ if 
        $\sigma_\ell > i$ for any $\ell>\sigma^{-1}(i)$.
\end{itemize}

We are interested in the number of cycles of permutations rather than
their number of right-to-left minima.
The following bijection, which is a variant of the first fundamental
transformation on permutation \cite[§ 10.2]{LothaireCombinatorics},
sends one of this statistics to the other.
Take a permutation $\sigma$, written in its cycle notation so that:
\begin{itemize}
    \item its cycles end with their minima;
    \item the minima of the cycles are in increasing order.
\end{itemize}
For example, $\sigma=(3\ 5\ 1) (7\ 4\ 2) (6).$
Now, erase the parenthesis: we obtain the word notation of a permutation $\Psi(\sigma)$.

The application $\Psi$ is a bijection from $S_N$ to $S_N$.
Besides, the minima of the cycles of $\sigma$ are the right-to-left minima of $\Psi(\sigma)$,
while the ascents in $\Psi(\sigma)$ are the exceedances in $\sigma$
(a similar statement is given in \cite[Theorem 10.2.3]{LothaireCombinatorics}).

From now on, we assume $\beta\cdot \theta=1$.
The properties above imply that $\mu^T_N(\beta)$ is the push-forward of the Ewens
measure with parameter $\theta$ by the application
$\Phi \circ \Psi$.
Combining this with Theorem \ref{ThmPasepPermTab},
the steady state of the {\em SSEP} $\mu_N$ is the push-forward of Ewens
measure by the application $\sigma \mapsto w^{\Phi(\Psi(\sigma))}$.
But this application admits an easy direct description
\[ \begin{array}{rcl}
    S_{N+1} & \to & \{0;1\}^N \\
    \sigma & \mapsto & (\delta_{\sigma(2) \geq 2},\delta_{\sigma(3) \geq 3},\dots,
    \delta_{\sigma(N+1) \geq N+1}).
\end{array} \]

Recall that, as explained above, we are interested in the random function $F^{(N)}_\tau$,
where $\tau$ is distributed according to the measure $\mu_{N-1}$.
The results above imply that this random function has the same distribution
as $F^{(N+1)}_\sigma$,
where $\sigma$ is a random Ewens permutation of size $N$
 and $F^{(N+1)}_\sigma$ is the function defined in Section \ref{SubsectMotivations}.

 This was our original motivation to study $F^{(N+1)}_\sigma$.

\subsection{Bounds for cumulants}
Let us fix some real numbers $x_1,\dots,x_\ell$ in $[0;1]$.
In this section, we will give some bounds on the joint cumulants of the
random variables $(F^{(N)}_\sigma(x_1),\dots,F^{(N)}_\sigma(x_\ell))$.

Let us begin by the following bound (step 2 of the proof, according to the
division done in Section \ref{SectCycles}).
\begin{proposition}\label{PropBoundTau0}
 For any $\ell \geq 1$, 
any $N \geq 1$ and any lists $i_1,\dots,i_\ell$ and $s_1,\dots,s_\ell$
of integers in $[N]$,
\[ \kappa(B^{(N)}_{i_1,s_1},\dots,B^{(N)}_{i_\ell,s_\ell}) \leq C_\ell
N^{-|\{i_1,\dots,i_\ell,s_1,\dots,s_\ell\}|+1}, \]
where $C_\ell$ is the constant defined by Theorem~\ref{ThMain}.
\end{proposition}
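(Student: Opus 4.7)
The plan is to recognize that the proposition is the special case of the main lemma (Theorem~\ref{ThMain}) in which $r=\ell$ and the set partition $\tau$ of $[\ell]$ is the partition into singletons $\{\{1\},\{2\},\dots,\{\ell\}\}$. In that case each product $\prod_{j \in \tau_k} B^{(N)}_{i_j,s_j}$ reduces to the single factor $B^{(N)}_{i_k,s_k}$, so the left-hand sides of \eqref{EqMain} and of the proposition's bound coincide. Moreover, since $\tau$ is the finest partition of $[\ell]$, we have $\Conn(G_2(\ii,\ss)) \vee \tau = \Conn(G_2(\ii,\ss))$, and Theorem~\ref{ThMain} immediately yields
\[ \left| \kappa\!\left( B^{(N)}_{i_1,s_1},\dots,B^{(N)}_{i_\ell,s_\ell} \right) \right| \leq C_\ell \cdot N^{-\#(\Conn(G_1(\ii,\ss))) - \#(\Conn(G_2(\ii,\ss))) + 1}. \]

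The remaining task is then purely combinatorial: prove the graph-theoretic inequality
\[ \#(\Conn(G_1(\ii,\ss))) + \#(\Conn(G_2(\ii,\ss))) \geq |W|, \]
where $W=\{i_1,\dots,i_\ell,s_1,\dots,s_\ell\}$. I would do this by introducing the auxiliary (multi)graph $H$ on the vertex set $W$ whose edges (with loops allowed when $i_j=s_j$) are the distinct pairs $\{i_j,s_j\}$ that occur. By definition of $G_1$, the number of edges of $H$ equals the number of distinct pairs $(i_j,s_j)$, namely $\#(\Conn(G_1(\ii,\ss)))$; and by definition of $G_2$ (and the fact that, by construction of $W$, every vertex of $H$ is incident to at least one edge), two indices $j,h$ are connected in $G_2$ exactly when their associated edges of $H$ lie in the same connected component, so $\#(\Conn(G_2(\ii,\ss))) = \#(\Conn(H))$. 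The elementary bound $\#(\Conn(H)) \geq |V(H)| - |E(H)|$, obtained by noting that adding an edge to a graph decreases the component count by at most one, then gives precisely what we want.

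The only real conceptual step is the second one: translating the two graph-theoretic quantities appearing in Theorem~\ref{ThMain} into the single quantity $|W|$ demanded by the proposition. This translation is essentially a one-line combinatorial exercise in the present setting because every cumulant slot contains a single variable, but the same kind of bookkeeping, powered by the lemmas of Section~\ref{SectGraph}, will be more delicate when $\tau$ is coarser, as in the later applications to exceedances and to dashed patterns.
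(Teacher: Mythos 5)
Your proof is correct, and the first half (specializing Theorem~\ref{ThMain} to $r=\ell$ and $\tau$ the partition into singletons, so that $\Conn(G_2(\ii,\ss))\vee\tau=\Conn(G_2(\ii,\ss))$) is exactly what the paper does. Where you diverge is in the combinatorial step $\#\big(\Conn(G_1)\big)+\#\big(\Conn(G_2)\big)\ge |W|$ with $W=\{i_1,\dots,i_\ell,s_1,\dots,s_\ell\}$. The paper introduces the graph $G(\ii,\ss)$ on the doubled index set $[\ell]\sqcup[\ell]$, whose components are in bijection with $W$, and then invokes Lemma~\ref{LemGraphFibers2}: $G_1$ and $G_2$ are precisely the strong and usual quotients of $G(\ii,\ss)$ under the ``forget the bar'' map. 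You instead build a multigraph $H$ directly on the value set $W$ with one edge per distinct pair $(i_j,s_j)$, identify $|E(H)|$ with $\#\big(\Conn(G_1)\big)$ and $\#\big(\Conn(H)\big)$ with $\#\big(\Conn(G_2)\big)$ (the latter using that every vertex of $W$ is covered by some edge), and finish with the elementary bound $\#\big(\Conn(H)\big)\ge |V(H)|-|E(H)|$, which is valid for multigraphs with loops. This is a perfectly sound and arguably more self-contained argument; its one cosmetic wrinkle is that distinct \emph{ordered} pairs $(i_j,s_j)$ can collapse to the same unordered edge $\{i_j,s_j\}$ (e.g.\ $(1,2)$ and $(2,1)$), but since that only makes $|E(H)|$ smaller the inequality still goes the right way. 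What the paper's route buys is reuse: Lemma~\ref{LemGraphFibers2} (together with Lemma~\ref{LemContractionFibers}) is exactly the tool needed again for the adjacency and dashed-pattern applications, where $\tau$ is a nontrivial grouping and your direct edge count would no longer suffice as stated.
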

\begin{proof}
    Using Theorem \ref{ThMain} for $\tau=\big\{\{1\},\dots,\{\ell\}\big\}$,
    we only have to prove that
    \[-\#\big(\Conn(G_1(\ii,\ss))\big) - \#\big(\Conn(G_2(\ii,\ss)) \big) \geq 
   - |\{i_1,\dots,i_\ell,s_1,\dots,s_\ell\}|.\]
    The last quantity $|\{i_1,\dots,i_\ell,s_1,\dots,s_\ell\}|$ can be seen as
    the number of connected components of the graphs $G(\ii,\ss)$ defined as follows:
    \begin{itemize}
        \item its vertex set is $[\ell] \sqcup [\ell] = \{1,\bar{1},\dots,\ell,\bar{\ell}\}$;
        \item there is an edge between $j$ and $k$ (resp. $j$ and $\bar{k}$,
            $\bar{j}$ and $\bar{k}$) if and only if $i_j=i_k$
            (resp. $i_j=s_k$, $s_j=s_k$).
    \end{itemize}
    The inequality above is simply
    Lemma~\ref{LemGraphFibers2} applied to the graph $G(\ii,\ss)$
    ($G_1(\ii,\ss)$ and $G_2(\ii,\ss)$ are respectively its strong and usual quotients).
\end{proof}

We can now prove the following bound:
\begin{proposition}\label{PropCumulantF}
    There exists a constant $C''_\ell$ such that, for any integer $N\geq 1$ and 
    real numbers $x_1$, \dots, $x_\ell$, one has
    \[ |\kappa(F^{(N)}_\sigma(x_1),\dots,F^{(N)}_\sigma(x_\ell))| \leq C''_\ell N^{-\ell+1}.\]
\end{proposition}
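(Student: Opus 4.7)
The plan is to follow the four-step strategy outlined in Section~\ref{SectCycles} (expand, bound, count, conclude), now applied to the statistic $F^{(N)}_\sigma$. First, observe that each weak exceedance indicator can be written as
\[\Bex_i(\sigma) = \sum_{s=i}^{N} B^{(N)}_{i,s},\]
so when $Nx$ is an integer
\[F^{(N)}_\sigma(x) = \frac{1}{N} \sum_{i=1}^{Nx} \sum_{s=i}^{N} B^{(N)}_{i,s}.\]
For general $x \in [0,1]$, $F^{(N)}_\sigma(x)$ is an affine combination of $F^{(N)}_\sigma(\lfloor Nx\rfloor/N)$ and $F^{(N)}_\sigma((\lfloor Nx\rfloor+1)/N)$, so by multilinearity of the cumulant the estimate in the general case is implied by the case of rational arguments with $Nx_k \in \mathbb{Z}$ (at the cost of a harmless factor $2^\ell$ in the constant).

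Next I would apply multilinearity of cumulants to expand
\[\kappa\big(F^{(N)}_\sigma(x_1),\dots,F^{(N)}_\sigma(x_\ell)\big)
= \frac{1}{N^\ell} \sum_{\ii,\ss} \kappa\big(B^{(N)}_{i_1,s_1},\dots,B^{(N)}_{i_\ell,s_\ell}\big),\]
where the sum runs over lists $\ii=(i_1,\dots,i_\ell)$ and $\ss=(s_1,\dots,s_\ell)$ with $1\le i_k\le Nx_k$ and $i_k\le s_k\le N$. Each summand is controlled by Proposition~\ref{PropBoundTau0}:
\[\big|\kappa(B^{(N)}_{i_1,s_1},\dots,B^{(N)}_{i_\ell,s_\ell})\big|\le C_\ell\, N^{-t(\ii,\ss)+1},\]
where $t(\ii,\ss) := |\{i_1,\dots,i_\ell,s_1,\dots,s_\ell\}|$ is the total number of distinct entries appearing among the two lists.

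The third step is the counting. Grouping the sum by the value of $t \in [2\ell]$, Lemma~\ref{LemNumberLists} (applied to the concatenated list of length $2\ell$) yields a constant $C'_{2\ell}$ such that the number of pairs $(\ii,\ss)$ with exactly $t$ distinct entries is at most $C'_{2\ell}\, N^t$. Combining the two bounds,
\[\big|\kappa(F^{(N)}_\sigma(x_1),\dots,F^{(N)}_\sigma(x_\ell))\big|
\le \frac{1}{N^\ell} \sum_{t=1}^{2\ell} C'_{2\ell}\, N^t \cdot C_\ell\, N^{-t+1}
\le 2\ell\, C_\ell\, C'_{2\ell}\, N^{-\ell+1}.\]
Setting $C''_\ell := 2^\ell \cdot 2\ell\, C_\ell\, C'_{2\ell}$ (the extra $2^\ell$ absorbs the reduction to integer arguments) gives the desired estimate.

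There is no real obstacle here: the argument is essentially a clean application of the main lemma plus the elementary counting of Lemma~\ref{LemNumberLists}. The one thing that needs to be checked carefully is that the exponent in Proposition~\ref{PropBoundTau0} is indeed governed by the total number of distinct symbols $t(\ii,\ss)$ among both rows of indices (not just one row), so that the cancellations $N^t \cdot N^{-t}$ happen uniformly in $t$. This is precisely the content of Proposition~\ref{PropBoundTau0}, which was tailored via Lemma~\ref{LemGraphFibers2} for exactly this situation, so the computation goes through with no further subtlety.
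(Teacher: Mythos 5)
Your proof is correct and follows essentially the same route as the paper: expand via $\Bex_i=\sum_{s\ge i}B^{(N)}_{i,s}$ and multilinearity, bound each elementary cumulant by Proposition~\ref{PropBoundTau0}, count the pairs of lists with $t$ distinct entries via Lemma~\ref{LemNumberLists}, and sum over $t$. The only (harmless) addition is your explicit reduction of non-integer arguments to the integer case by linear interpolation, which the paper sidesteps by simply assuming $Nx_k\in\mathbb{Z}$.
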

\begin{proof}
    To simplify the notations, we suppose that $N x_1,\dots,N x_\ell$ are integers,
    so that
    \[ (N-1) \cdot F^{(N)}_\sigma(x_i) = \sum_{i=2}^{N x_i} \Bex_i(\sigma).\]
    But the Bernoulli variable $\Bex_i$ can be written as
    $\Bex_i= \sum_{s \geq i} B^{(N)}_{i,s}$.
    Finally, by multilinearity, one has (step 1):
    \begin{equation}\label{EqDevCumExc}(N-1)^\ell \kappa(F^{(N)}_\sigma(x_1),\dots,F^{(N)}_\sigma(x_\ell))
    = \sum_{ {2\leq i_1 \leq Nx_1 \atop \vdots} \atop 2\leq i_\ell \leq Nx_\ell}
    \sum_{ {s_1 \geq i_1 \atop \vdots} \atop s_\ell \geq i_\ell}
    \kappa(B^{(N)}_{i_1,s_1},\dots,B^{(N)}_{i_\ell,s_\ell}).
    \end{equation}
    We apply Lemma \ref{LemNumberLists} to the list $i_1,\dots,i_\ell,s_1,\dots,s_\ell$
    and get that the number of pairs of lists $(\ii,\ss)$ such that
    $|\{i_1,\dots,i_\ell,s_1,\dots,s_\ell\}|$ is equal to a given number $t$
    is bounded from above by $C'_{2\ell} N^t$ (step 3).

    Combining this with Proposition \ref{PropBoundTau0}, we get that the total contribution
    of pairs of lists $(\ii,\ss)$ with $|\{i_1,\dots,i_\ell,s_1,\dots,s_\ell\}|=t$
    to the right-hand side of~\eqref{EqDevCumExc} is smaller than
    $C'_{2\ell} C_\ell N$ , which ends the proof of Proposition \ref{PropCumulantF} (step 4).
\end{proof}

{\em Illustration of the proof.}
Set $\ell=5$ and consider the lists $\ii=(5,2,2,7,7)$ and $\ss=(8,8,2,7,7)$.
The graph $G(\ii,\ss)$ associated to this pair of lists
is the graph $G$ drawn of Figure \ref{FigExGraph}.
It follows immediately that $G_1(\ii,\ss)= G//f$ has 4
connected components while $G_2(\ii,\ss)=G/f$ has 2.
Therefore, by Theorem \ref{ThMain},
\[
    \kappa(B^{(N)}_{5,8},B^{(N)}_{2,8},B^{(N)}_{2,2},B^{(N)}_{7,7},B^{(N)}_{7,7})
\leq C_5 N^{-5}.
\]
The same bound is valid for all sequences $\ii$ and $\ss$ such that $G(\ii,\ss)=G$.
There are fewer than $N^4$ such sequences: to construct such a sequence,
one has to choose distinct values for the four connected components of $G$, so that
they fulfill some inequalities.
Finally, their total contribution to~\eqref{EqDevCumExc} is smaller than $C_5 N^{-1}$.
\bigskip

{\em Comparison with a result of B. Derrida, J.L. Lebowitz and E.R. Speer.}
In \cite[Appendix A]{DerridaLongRangeCorrelation}, a {\em long range 
correlation phenomenon} for the {\em SSEP} is proved.
Rewritten in terms of Ewens random permutations {\it via} the material
of the previous section and with mathematical terminology,
it asserts that, for $i_1<\dots<i_\ell$,
\[\kappa(\Bex_{i_1},\dots,\Bex_{i_\ell}) = O(N^{-\ell+1}). \]
In fact, their result is more general because it corresponds to the {\em SSEP}
with all parameters.
This bound on cumulants can be obtained easily using our Proposition \ref{PropBoundTau0}
and Lemma \ref{LemNumberLists}.
A slight generalization of it (taking into account the case where some $i$'s can be
equal) implies directly Proposition \ref{PropCumulantF}.
Therefore, our method does not give some new results on the {\em SSEP}.
Nevertheless, it was natural to try to understand the long range correlation phenomenon
directly in terms of random permutations and that is what our approach does.

\subsection{Convergence results}\label{SubsectConv}
In this section, we explain how one can deduce from the bound on cumulants,
some results on the convergence of the random function $F^{(N)}_\sigma$,
in particular Theorem \ref{ThmExc}.

In addition to the bounds above, we need equivalents for the
first and second joint cumulants of the $F^{(N)}_\sigma(x)$.
An easy computation gives:
\begin{align*}
    \esper(\Bex_i)&=\frac{N-i+\theta}{N+\theta-1} ;\\
    \Var(\Bex_i)&= \frac{(i-1)(N-i+\theta)}{(N+\theta-1)^2} ;\\
    \Cov(\Bex_i,\Bex_j)&=-\frac{(n-j+\theta)(i-1)}{(N+\theta-1)^2(N+\theta-2)} \text{ for }i<j,
\end{align*}
from which we get the limits:
\begin{align}
    \lim_{N \to \infty} \esper(F^{(N)}_\sigma(x)) &= \int_0^x (1-t) dt + o(1) = \frac{1 - (1-x)^2}{2} 
    \label{EqEsperF};\\
    \lim_{N \to \infty} N \Cov(F^{(N)}_\sigma(x),F^{(N)}_\sigma(y))
    &= \int_0^{\min(x,y)} t (1-t) dt  \label{EqCovF}\\
    \nonumber    & \hspace{-1cm}
    - \int_{0 \leq t \leq x \atop 0 \leq u \leq y} \min(t,u) (1-\max(t,u)) dt du.
\end{align}
We call $K(x,y)$ the right-hand side of the second equation.
We begin with a proof of Theorem \ref{ThmExc},
which describes the asymptotic behavior of $F^{(N)}_\sigma(x)$,
for fixed value(s) of $x$.
\begin{proof}
    Consider the first statement.
    The convergence in probability of $F^{(N)}_\sigma(x)$ towards $1/2 \cdot (1-(1-x)^2)$
    follows immediately from equations
    \eqref{EqEsperF} and \eqref{EqCovF}.
    For the almost-sure convergence, we have to study the fourth centered moment.
    
    From moment-cumulant formula \eqref{EqMoment2Cumulant}
    and using the fact that all cumulants but the first are invariant by a
    shift of the variable,
    \[\esper\left( (F^{(N)}_\sigma(x) - \esper(F^{(N)}_\sigma(x)))^4 \right)=
    \kappa_4 ( F^{(N)}_\sigma(x) ) + 3 (\kappa_2 ( F^{(N)}_\sigma(x) ) )^2.\]
    By proposition \ref{PropCumulantF}, this quantity is bounded from above by $O(N^{-2})$
    and, in particular,
    \[ \sum_{N \geq 1} 
    \esper\left( (F^{(N)}_\sigma(x) - \esper(F^{(N)}_\sigma(x)))^4 \right) < \infty.\]
    The end of the proof is classical.
    First, we inverse the summation and expectation symbols (all quantities are nonnegative).
    As its expectation is finite, the random variable
    \[ \sum_{N \geq 1} (F^{(N)}_\sigma(x) - \esper(F^{(N)}_\sigma(x)))^4\]
   is almost surely finite and hence its general term
   $\big( (F^{(N)}_\sigma(x) - \esper(F^{(N)}_\sigma(x))^4\big)_{N \geq 1}$
   tends almost surely to $0$.
    
    Let us consider the second statement.
    Proposition \ref{PropCumulantF} implies that, for any list $j_1$,\dots,$j_\ell$
    of integers in $[r]$, one has
    \[\kappa(Z^{(N)}_\sigma(x_{j_1}),\dots,Z^{(N)}_\sigma(x_{j_\ell})) = O(N^{-r/2+1}).\]
    In particular, for $r>2$ the left-hand side tends to $0$.
    As the variables $Z^{(N)}_\sigma(x_i)$ are centered, this implies that 
    $(Z^{(N)}_\sigma(x_1),\dots,Z^{(N)}_\sigma(x_r))$ tends towards
    a centered Gaussian vector.
    The covariance matrix is the limit of the covariance of the $Z^{(N)}_\sigma(x_i)$,
    that is $(K(x_i,x_j))$.
\end{proof}

The previous theorem deals with pointwise convergence.
It is also possible to get some results for the  
random functions $(F^{(N)}_\sigma)_{N \geq 1}$.
In the following statement, we consider
convergence in the functional space $(C([0;1]),||\cdot||_\infty)$,
that is uniform convergence of continuous functions.

\begin{theorem}
    Almost surely, the function $F^{(N)}_\sigma$ converges towards
    the function 
    \[x \mapsto 1/2 \cdot (1 - (1-x)^2).\]

    Moreover, the sequence of random functions
    $(x \mapsto Z^{(N)}_\sigma(x))_{N \geq 1}$ converges in distribution towards
    the centered Gaussian process $x \mapsto G(x)$
    with covariance function $\Cov(G(x),G(y))=K(x,y)$.
    \label{ThmCvF}
\end{theorem}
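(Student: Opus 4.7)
The plan is to prove the two statements of Theorem \ref{ThmCvF} separately, lifting the pointwise results of Theorem \ref{ThmExc} to functional ones.

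For the almost sure uniform convergence, I would exploit monotonicity. Each $F^{(N)}_\sigma$ is non-decreasing on $[0;1]$ (as a piecewise affine interpolation of partial sums of the nonnegative variables $\Bex_i$), and the candidate limit $F_\infty(x) = (1-(1-x)^2)/2$ is continuous and non-decreasing. Applying the almost sure pointwise convergence from Theorem \ref{ThmExc} at each element of a countable dense set $D \subset [0;1]$ (for example the dyadic rationals) and intersecting the corresponding almost sure events yields a single almost sure event on which convergence holds simultaneously on $D$. The classical Glivenko-Cantelli / P\'olya argument (monotonicity of both $F^{(N)}_\sigma$ and $F_\infty$, together with continuity of $F_\infty$) then upgrades pointwise convergence on $D$ to uniform convergence on $[0;1]$.

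For the functional CLT, I would follow the standard recipe: convergence of finite-dimensional marginals, already given by the second part of Theorem \ref{ThmExc}, together with tightness in $(C([0;1]),\|\cdot\|_\infty)$. To prove tightness I would use the Kolmogorov-Chentsov moment criterion, for which it is enough to show
\[
\esper\bigl[(Z^{(N)}_\sigma(x)-Z^{(N)}_\sigma(y))^{4}\bigr] \leq C\,|x-y|^{2}
\]
uniformly in $N$ and in $x,y \in [0;1]$. I would split into two regimes. When $|x-y| \geq 1/N$, I expand the fourth centered moment as $3\,\Var(Z^{(N)}_\sigma(x)-Z^{(N)}_\sigma(y))^{2} + \kappa_{4}(Z^{(N)}_\sigma(x)-Z^{(N)}_\sigma(y))$ via the moment-cumulant formula: the variance is bounded by $O(|x-y|)$ thanks to the explicit formulas for $\Var(\Bex_i)$ and $\Cov(\Bex_i,\Bex_j)$ recalled in Section \ref{SubsectConv}, while the fourth cumulant requires a refinement of Proposition \ref{PropCumulantF} (see below) to obtain $O(N^{-1}|x-y|)$, a quantity dominated by $|x-y|^{2}$ in this regime. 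When $|x-y| < 1/N$, I would use the deterministic estimate: $F^{(N)}_\sigma$ is piecewise affine with slopes in $\{0,1\}$, so $|F^{(N)}_\sigma(x)-F^{(N)}_\sigma(y)| \leq |x-y|$ and the same holds for its expectation, whence $|Z^{(N)}_\sigma(x)-Z^{(N)}_\sigma(y)| \leq 2\sqrt{N}\,|x-y|$; raising to the fourth power gives $16\,N^{2}|x-y|^{4} \leq 16\,|x-y|^{2}$ whenever $N|x-y| \leq 1$.

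The main obstacle is the refinement of Proposition \ref{PropCumulantF}. Re-running the proof of that proposition with the $i$-indices constrained to the interval $(Ny,Nx]$ while the $s$-indices still range freely, each distinct $i$-value contributes a factor $N|x-y|$ rather than $N$. One needs a weighted version of Lemma \ref{LemNumberLists} that distinguishes constrained from unconstrained indices, and then must check, using Proposition \ref{PropBoundTau0}, that the combinatorial bookkeeping still closes with the additional factor $|x-y|^{a}$, where $a$ is the number of distinct constrained indices. No new idea beyond Section \ref{SectExc} is required; the work lies in a careful re-run of the counting argument with this refined weighting.
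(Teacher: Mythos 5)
Your proposal is correct and follows essentially the same route as the paper: the first part is exactly the monotonicity (P\'olya-type) upgrade of pointwise to uniform convergence used in the paper, and the second part rests on the identical key estimate $\esper[(Z^{(N)}_\sigma(x)-Z^{(N)}_\sigma(y))^4]\leq C|x-y|^2$, obtained via the same refinement of Proposition \ref{PropCumulantF} and Lemma \ref{LemNumberLists} (constraining the $\ii$-entries to $(Ny,Nx]$ gains the factor $|x-y|$). The only cosmetic difference is at the very end: you invoke Kolmogorov--Chentsov together with the deterministic $1$-Lipschitz bound on $F^{(N)}_\sigma$ to handle $|x-y|<1/N$, whereas the paper restricts to grid points (where $|s-s'|\geq 1/N$ automatically) and applies Billingsley's Theorem 10.2 maximal inequality; both are valid conclusions from the same moment bound.
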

\begin{proof}
    As, for any $N \geq 1$ and any $\sigma \in S_N$, the function
    $x \mapsto F^{(N)}_\sigma(x)$ is non-decreasing,
    the first statement follows easily from the convergence at any fixed $x$.
    The argument can be found for example in a paper
    of J.F. Marckert \cite[first page]{JFPontBrownien},
    but it is so short and simple that we copy it here.
    By monotonicity of $F^{(N)}_\sigma$ and $F$, for any list $(x_i)_{0 \le i \le k}$
    with $0=x_0 < x_1 < \dots < x_k =1$, one has
    \begin{multline*}
        \sup_{x \in [0;1]} |F^{(N)}_\sigma(x) - F(x) | \\
    \leq \max_{0 \leq j < k} \max \big(|F^{(N)}_\sigma(x_{j+1})-F(x_j)|,
    |F^{(N)}_\sigma(x_j)-F(x_{j+1})| \big) \\
    \stackrel{\text{\tiny a.s.}}{\longrightarrow}
    \max_{0 \leq j < k} |F(x_j)-F(x_{j+1})|,
\end{multline*}
    which may be chosen as small as wanted.

    Consider the second statement.
    If the sequence of random function $x \mapsto Z^{(N)}_\sigma(x)$ has a limit,
    its finite-dimensional laws are necessarily the limits of the ones of $Z^{(N)}_\sigma$,
    that is, by Theorem \ref{ThmExc}, Gaussian vectors with covariance
    matrices given by $(K(x_i,x_j))_{1 \leq i,j \leq r}$.
    As a probability measure on $\C([0;1])$ is entirely determined by its finite dimensional laws
    \cite[Example 1.2]{BillingsleyConv},
    one just has to prove that the sequence $x \mapsto Z^{(N)}_\sigma(x)$
    has indeed a limit.
    To do this, it is enough to prove that it is {\em tight}
    \cite[Section 5, Theorems 5.1 and 7.1]{BillingsleyConv},
    that is, for each $\epsilon >0$ there exists some constant $M$ such that:
    \[\text{for all }N>0, \text{ one has }
    \Prob\big( || Z^{(N)}_\sigma ||_\infty >M \big) \le \epsilon.\]

    Once again, this follows from a careful analysis of the fourth moment.
    
%    In the case of rescaled discrete process (as here), a criterion for tightness
%    is the following \cite[Example 10.2]{BillingsleyConv}:
%    $Z^{(N)}_\sigma(0)$ is a relatively compact sequence and
%    there exists $\xi >2$ and $C>0$ such that,
%    for each $N \geq 1$ and $s \neq s'$ in $[0;1]$ with $Ns$ and $Ns'$ integers, one has:
%    \[\esper\left( | Z^{(N)}_\sigma(s) - Z^{(N)}_\sigma(s') |^{\xi} \right) \leq C |s-s'|^{\xi/2}.\]

    Let $N \geq 1$ and $s \neq s'$ in $[0;1]$ such that $Ns$ and $Ns'$ are integers.
    Using equation \eqref{EqMoment2Cumulant} and
    the fact that $Z^{(N)}_\sigma(s)$ and $Z^{(N)}_\sigma(s')$ are centered,
    one has:
    \begin{multline*}
        \esper\left( ( Z^{(N)}_\sigma(s) - Z^{(N)}_\sigma(s'))^{4} \right) 
        \\ =\kappa_4(Z^{(N)}_\sigma(s) - Z^{(N)}_\sigma(s'))+ 3 \kappa_2(Z^{(N)}_\sigma(s) - Z^{(N)}_\sigma(s'))^2
    \\= N^2 \big(\kappa_4(F^{(N)}_\sigma(s)-F^{(N)}_\sigma(s')) 
        + 3 \kappa_2(F^{(N)}_\sigma(s) - F^{(N)}_\sigma(s'))^2 \big).
    \end{multline*}
    A simple adaptation of the proof of Proposition \ref{PropCumulantF} 
    shows that
    \[\kappa_\ell(F^{(N)}_\sigma(s)-F^{(N)}_\sigma(s')) \leq C_\ell N^{-\ell+1} |s-s'|.\]
    Indeed, in Lemma \ref{LemNumberLists}, if we ask that at least one entry
    of the list $\ii$ is between $Ns$ and $Ns'$ then the number of lists
    is bounded from above by $C'_L N^t |s-s'|$.
    Finally, 
    \begin{multline*}
        \esper\left( ( Z^{(N)}_\sigma(s) - Z^{(N)}_\sigma(s'))^{4} \right) \leq (N^2 (C_4 N^{-3} |s-s'| + 3 C_2^2 N^{-2} |s-s'|^2)) \\
    \leq (C_4 + 3 C_2^2) |s-s'|^2.
    \end{multline*}
    The last inequality has been deduced from $|s-s'| \geq N^{-1}$.
    
    We can now apply Theorem 10.2 of Billingsley's book \cite{BillingsleyConv}
    with $S_i=Z^{(N)}_\sigma(i/N)$ (for $0 \leq i \leq N$), $\alpha=\beta=1$
    and $u_\ell= (C_4 + 3 C_2^2)^{1/2}/N$
    (see equation (10.11) of the same book).
    We get that there exists some constant $K$ such that
    \[ \Prob \big( \max_{0 \leq i \leq N} | S_i |  \geq M \big) \leq K M^{-4},\]
    which proves that the sequence $Z^{(N)}_\sigma$ is tight.
\end{proof}

%\begin{remark}
%    Can the process $G(x)$ be built using Brownian motion/excursion?
%    Any natural explanation?
%\end{remark}

\section{Generalized patterns}\label{SectPatterns}

This Section is devoted to the applications of our method to 
adjacencies (Subsection \ref{SubsectAdj}) and dashed patterns
(Subsection \ref{SubsectDashedPatterns}).
These two statistics belong in fact to the same general framework and
we discuss in Subsection \ref{SubsectLocalStat} the possibility
of unifying our results.

The proofs in this section are a little bit more technical than
the ones before and in particular we need a new lemma for step 3,
given in Subsection \ref{SubsectLemNbSuites}.

\subsection{Preliminaries}\label{SubsectLemNbSuites}
Let $L \geq 1$ be an integer.
For each pair $\{j,k\} \subset [L]$, we choose a {\em finite} set of
integers $D_{\{j,k\}}$.

Consider a list $i_1,\dots,i_L$ of integers.
For each pair $e=\{j,k\} \subset [L]$ (with $j<k$), we denote $\delta_e(\ii)$
the difference $i_k-i_j$.
Then we associate to $\ii$
a graph of vertex set $[L]$ and edge set $\{e : \delta_e(\ii) \in D_e\}$.

The following lemma is a slight generalization of Lemma \ref{LemNumberLists}

\begin{lemma}
    For each $L$ and family of sets $(D_{\{j,k\}})_{1 \leq j < k \leq L}$,
    there exists a constant $C''_{L,\DD}$ with the following property.
    For any $N\geq 1$ and $t \leq L$, the number of sequences $i_1,\dots,i_L$ with entries
    in $[N]$, whose corresponding graph has exactly $t$ connected components
    is bounded from above by $C''_{L,\DD} N^t$.
    \label{LemNumberListsGen}
\end{lemma}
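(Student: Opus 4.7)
The plan is to reduce the count to a spanning-tree argument, essentially mimicking the structure of Lemma \ref{LemNumberLists} but using the finiteness of the sets $D_e$ to control sequence values along graph edges. First I would observe that the graph associated to a sequence $\ii$ is a subgraph of the complete graph on $[L]$, so it belongs to a set of size at most $2^{\binom{L}{2}}$. Consequently, it suffices to fix an arbitrary graph $H$ on $[L]$ with exactly $t$ connected components and bound the number of sequences $\ii$ whose associated graph is $H$; up to the universal factor $2^{\binom{L}{2}}$, summing over $H$ gives the desired count.

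For a fixed $H$, I would work with the weaker condition that the graph of $\ii$ \emph{contains} $H$, i.e.\ that $\delta_e(\ii) \in D_e$ for every edge $e$ of $H$; this only enlarges the set of sequences counted. Pick once and for all a spanning forest $F$ of $H$ (which has exactly $L-t$ edges) together with a distinguished vertex $r_C$ in each connected component $C$ of $H$. The key determinacy property is that, if one knows $i_{r_C}$ for every component $C$ and the differences $\delta_e(\ii)$ along the edges of $F$, then all the values $i_1,\dots,i_L$ are determined: propagate along $F$ starting from each root.

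From here the count is immediate. There are at most $N^t$ choices for the $t$-tuple of root values $(i_{r_C})_C$, each being an element of $[N]$. Once these are fixed, each edge $e$ of $F$ contributes a multiplicative factor of at most $|D_e|$ for the difference $\delta_e(\ii)$, and the remaining values are forced. This yields at most $N^t \cdot \prod_{e \in F} |D_e|$ sequences, which is bounded above by $N^t \cdot \prod_{\{j,k\} \subseteq [L]} (1 + |D_{\{j,k\}}|)$. Summing over the at most $2^{\binom{L}{2}}$ possible graphs $H$ produces a constant $C''_{L,\DD}$ depending only on $L$ and the family $\DD$, as required.

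I do not expect any serious obstacle: the argument is purely combinatorial, and the only point requiring mild care is noticing that passing from ``graph equals $H$'' to ``graph contains $H$'' is legitimate because we only seek an upper bound and each sequence still gets counted (in fact, over-counted) once we sum over all target graphs $H$ with exactly $t$ components.
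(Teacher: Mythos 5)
Your proof is correct and follows essentially the same route as the paper's: fix the target graph, fix finitely much difference data along its edges, and observe that the sequence is then determined by one free value in $[N]$ per connected component, giving $N^t$ up to a constant depending only on $L$ and $\DD$. Your spanning-forest formulation and the explicit relaxation from ``graph equals $H$'' to ``graph contains $H$'' just make precise what the paper compresses into the parenthetical ``with some constraints, so that no extra edges appear.''
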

\begin{proof}
    If we fix a graph $G$ with vertex set $L$ and $t$ connected components and
    if we fix also, for each edge $e$ of the graph, the actual value of $\delta_e(\ii)$,
    then the corresponding number of lists $\ii$ is smaller than $N^t$.
    Indeed, the sequence will be determined by the choice of one value per connected component
    of $G$ (with some constraints, so that no extra edges appear).
    But the number of graphs and of values on edges are finite (the sets $D_{j,k}$ are
    finite) and depend only on $L$ and on the family $\DD$.
\end{proof}

\subsection{Adjacencies}\label{SubsectAdj}
In this section, we prove the following extension of Theorem
\ref{ThmAdj}.

\begin{theorem}
    Let $\sigma_N$ be a sequence of random Ewens permutations, such that $\sigma_N$
    has size $N$.
    Then the number $A^{(N)}$ of adjacencies in $\sigma_N$
    converges in distribution towards a Poisson variable with parameter $2$.
    \label{ThmAdjEwens}
\end{theorem}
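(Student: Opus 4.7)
The plan is to follow the four-step template laid out in Section~\ref{SectCycles} for the cycle-count Poisson limit. First (expansion), I would write
\[ \Bad_i = \sum_{\epsilon \in \{\pm 1\}} \sum_{s: s,s+\epsilon \in [N]} B^{(N)}_{i,s}\, B^{(N)}_{i+1,s+\epsilon}, \]
and expand by multilinearity of cumulants:
\[ \kappa_\ell(A^{(N)}) = \sum_{(i_r,s_r,\epsilon_r)_{r=1}^\ell} \kappa\bigl( B^{(N)}_{i_1,s_1}B^{(N)}_{i_1+1,s_1+\epsilon_1},\dots,B^{(N)}_{i_\ell,s_\ell}B^{(N)}_{i_\ell+1,s_\ell+\epsilon_\ell}\bigr). \]
Second (bound on each summand), I apply Theorem~\ref{ThMain} with $r=2\ell$, with the partition $\tau = \{\{1,2\},\{3,4\},\dots,\{2\ell-1,2\ell\}\}$ pairing each factor, and with $\ii = (i_1,i_1+1,\dots)$, $\ss = (s_1,s_1+\epsilon_1,\dots)$. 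Each summand is bounded by $C_{2\ell}\, N^{-\#\Conn(G_1(\ii,\ss)) - \#\Conn(G_2(\ii,\ss)\vee\tau)+1}$.

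Third (counting), I use Lemma~\ref{LemNumberListsGen} applied to the list $(i_1,\dots,i_\ell,s_1,\dots,s_\ell)$: the constraints coming from ``$i_r+1$'' and ``$s_r+\epsilon_r$'' correspond exactly to differences lying in a fixed finite set $D$, so this lemma controls the number of index-sequences producing a prescribed connection pattern. Fourth (identification of the leading term), by the same argument as in Section~\ref{SectCycles} one checks that in the $O(1)$ regime the bound from Theorem~\ref{ThMain} is saturated only when all the pairs $(i_r,s_r)$ coincide AND the whole index-graph is a single block; combined with $\tau$-pairing this forces $(i_r,s_r,\epsilon_r)$ to be independent of $r$. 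Every non-diagonal configuration gives a contribution of $O(N^{-1})$ to $\kappa_\ell(A^{(N)})$, leaving
\[ \kappa_\ell(A^{(N)}) = \sum_{i,s,\epsilon} \kappa_\ell\bigl( B^{(N)}_{i,s}B^{(N)}_{i+1,s+\epsilon}\bigr) + O(N^{-1}). \]

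Finally, each $B^{(N)}_{i,s}B^{(N)}_{i+1,s+\epsilon}$ is a Bernoulli variable whose parameter $p_{i,s,\epsilon}$ is computed via Lemma~\ref{LemDistProdB}; it equals $\theta^c/((N+\theta-1)(N+\theta-2))$, where $c\in\{0,1,2\}$ counts the cycles of the partial permutation $i\mapsto s,\ i+1\mapsto s+\epsilon$. For the $\ell$-th cumulant of a Bernoulli$(p)$ one has $\kappa_\ell = p + O(p^2)$, so the diagonal sum equals $\esper(A^{(N)}) + O(\sum p_{i,s,\epsilon}^2)$. There are $\sim 2N^2$ generic triples with $c=0$, plus a negligible $O(N)$ triples with $c\geq 1$ (fixed-point pair with $s=i,\epsilon=+1$, or 2-cycle with $s=i+1,\epsilon=-1$), giving $\esper(A^{(N)}) \to 2$ independently of $\theta$ and $\sum p^2 = O(N^{-2})$. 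Therefore $\kappa_\ell(A^{(N)}) \to 2$ for every $\ell\ge 1$, which are the cumulants of a Poisson$(2)$ law, giving convergence in distribution.

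The main obstacle is Step~4: systematically verifying that non-diagonal configurations contribute only $O(N^{-1})$. One has to compare the exponent $\#\Conn(G_1) + \#\Conn(G_2\vee\tau)$ provided by Theorem~\ref{ThMain} to the number $t$ of independent degrees of freedom given by Lemma~\ref{LemNumberListsGen}; the analysis parallels the final part of Section~\ref{SectCycles} (where $M(\ii)=t(\ii)$ and $Q(\ii)=1$ force all rows to coincide), but it is technically slightly heavier here because the pair $(i,i+1)$ and the choice of $\epsilon\in\{\pm 1\}$ introduce additional near-collision patterns (e.g.\ chained adjacencies $\sigma(i),\sigma(i+1),\sigma(i+2)$ forming an arithmetic progression) that must be shown to produce strict inequality in at least one of the two component-counts.
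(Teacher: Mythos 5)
Your architecture is exactly the paper's (Section~\ref{SubsectAdj}): the same expansion of $A^{(N)}$ in the variables $B^{(N)}_{i,s}B^{(N)}_{i+1,s+\epsilon}$, the same application of Theorem~\ref{ThMain} with the pairing partition, the same use of Lemma~\ref{LemNumberListsGen} on the combined list of length $2\ell$, and your computation of the diagonal term (Bernoulli parameters $\theta^{c}/((N+\theta-1)(N+\theta-2))$ with only $O(N)$ non-generic triples, hence every cumulant tends to $2$ independently of $\theta$) is correct and matches the paper's. The problem is that what you label ``the main obstacle'' in Step~4 is the heart of the proof, and you leave it unresolved. What is needed, uniformly over \emph{all} collision patterns and not via an enumeration of near-collisions, is the inequality
\[
\#\big(\Conn(H)\big)\;\le\;\#\big(\Conn(H_1)\big)+\#\big(\Conn(H_2)\big),
\]
where $H$ is the graph on $[\ell]\sqcup[\ell]$ recording the coincidences $|i_j-i_k|\le 2$, $|i_j-s_k|\le 2$, $|s_j-s_k|\le 2$ (so that $\#(\Conn(H))$ is exactly the parameter $t$ that Lemma~\ref{LemNumberListsGen} counts by $O(N^t)$), $H_1$ is the graph with an edge when \emph{both} $|i_j-i_k|\le 2$ and $|s_j-s_k|\le 2$ (its component count lower-bounds the number $M$ of distinct index pairs via $M\ge 2\#(\Conn(H_1))\ge \#(\Conn(H_1))+1$, which absorbs the $+1$ in the exponent of Theorem~\ref{ThMain}), and $H_2$ is the single-coincidence graph dominated componentwise by $G_2(\ii,\ss)\vee\tau$. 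Chaining these gives a per-summand bound $O(N^{-t})$ against $O(N^t)$ summands in each slice, which is the whole of Step~4.

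The missing ingredient is precisely Lemma~\ref{LemGraphFibers2}: $H_1$ and $H_2$ are the strong and usual quotients of $H$ under the map forgetting which copy of $[\ell]$ a vertex lies in, and that lemma gives the displayed inequality in one stroke, with no case analysis of chained adjacencies or arithmetic-progression patterns. Your proposed alternative --- checking strict inequality in one of the two component counts for each near-collision configuration --- is not obviously wrong, but it is open-ended (the configurations proliferate with $\ell$) and you have not exhibited the argument that closes it. Once Lemma~\ref{LemGraphFibers2} is invoked, the rest of your write-up, including the observation that an order-one contribution forces $M=2$ and $\#(\Conn(H_1))=1$ and hence constant lists $\ii$, $\ss$, $(\epsilon_r)$, goes through as you state it.
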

\begin{proof}
    As before, we write $A^{(N)}$ in terms of the $B^{(N)}_{i,s}$ (we use the convention
    $B^{(N)}_{i,s}=0$ if $i$ or $s$ is not in $[N]$):
    \[A^{(N)} = \sum_{1 \leq i,s \leq N \atop \epsilon = \pm 1}
    B^{(N)}_{i,s} B^{(N)}_{i+1,s+\epsilon}.\]
    Hence, for $\ell \geq 1$, its $\ell$-th cumulant writes as (step 1):
    \begin{equation}\label{EqDevCumAdj}
    \kappa_\ell(A^{(N)}) = \sum_{1 \leq i_1,s_1,\dots,i_\ell,s_\ell \leq N \atop 
    \epsilon_1,\dots,\epsilon_\ell= \pm 1}
    \kappa \bigg( B^{(N)}_{i_1,s_1} B^{(N)}_{i_1+1,s_1+\epsilon_1}, \cdots,
    B^{(N)}_{i_\ell,s_\ell} B^{(N)}_{i_\ell+1,s_\ell+\epsilon_\ell} \bigg).
    \end{equation}

    Given two lists $\ii$ and $\ss$ of positive integers,
    we consider the three following graphs:
    \begin{itemize}
        \item $H_1$ has vertex set $[\ell]$ and has an edge between
            $j$ and $k$ if $|i_j-i_k| \leq 2$ {\em and} $|s_j -s_k| \leq 2$;
        \item $H_2$ has vertex set $[\ell]$ and has an edge between
            $j$ and $k$ if
            \[ \{i_j,i_j \pm 1,s_j,s_j \pm 1\} \cap \{i_k,i_k \pm 1,s_k,s_k \pm 1\}
            \neq \emptyset.\]
        \item $H$ has vertex set $[\ell] \sqcup [\ell]$ and has an edge
            between $j$ and $k$ (resp. $j$ and $\bar{k}$, $\bar{j}$ and $\bar{k}$)
            if $|i_j-i_k| \leq 2$ (resp. $|i_j-s_k| \leq 2$, $|s_j -s_k| \leq 2$)
    \end{itemize}
    We will use Theorem \ref{ThMain} to give a bound for 
    \[\bigg|\kappa \big( B^{(N)}_{i_1,s_1} B^{(N)}_{i_1+1,s_1+\epsilon_1}, \cdots,
        B^{(N)}_{i_\ell,s_\ell} B^{(N)}_{i_\ell+1,s_\ell+\epsilon_\ell} \big)\bigg| \]
    Clearly, the number $M(\ii,\ss)$ of different pairs in the set 
    \[ \{(i_j,s_j); 1 \leq j \leq \ell\} \cup 
        \{(i_j+1,s_j+\epsilon_j); 1 \leq j \leq \ell\} \]
    is at least equal to $2\#(\Conn(H_1)) \geq \#(\Conn(H_1))+1$.
    Besides, in this case, the graph $G'_2$ introduced in Section \ref{SubsectMainResult}
    has the same vertex set as $H_2$ and fewer edges.
    Hence it has more connected components.
    Therefore, Theorem \ref{ThMain} implies (step 2):
    \[\bigg|\kappa \big( B^{(N)}_{i_1,s_1} B^{(N)}_{i_1+1,s_1+\epsilon_1}, \cdots,
        B^{(N)}_{i_\ell,s_\ell} B^{(N)}_{i_\ell+1,s_\ell+\epsilon_\ell} \big)\bigg|
     \leq C_{2\ell} N^{-\#(\Conn(H_1))-\#(\Conn(H_2))}.\]
     But, using the terminology of Section \ref{SubsectFibers2},
     the graphs $H_1$ and $H_2$ are the strong and usual quotients of $H$.
     Therefore, by Lemma \ref{LemGraphFibers2}, one has:
    \begin{equation}\label{EqCCAdj}
        \#(\Conn(H)) \leq \#(\Conn(H_1))+\#(\Conn(H_2)).
    \end{equation}

    Besides, Lemma \ref{LemNumberListsGen} implies
    the number of lists $\ii$ and $\ss$ with entries in $[N]$
    such that $H$ has exactly
    $t$ connected components is bounded from above by $C''_{2\ell,\DD} N^t$
    for $\DD$ well-chosen (step 3).
    In particular the constant
    $C''_{2\ell,\DD}$ does not depend on $N$.
    Therefore, the total contribution of these lists to equation \eqref{EqDevCumAdj}
    is bounded from above by $C_{2\ell} N^{-t} \cdot C''_{2\ell,\DD} N^t
    = C_{2\ell} \cdot C''_{2\ell,\DD}$.

    Finally,
    \[\kappa_\ell(A^{(N)})= O(1).\]
    Moreover, only lists such that $M(\ii,\ss)=2$ and $\#(\Conn(H_1))=1$
    contribute to the term of order $1$.
    But this implies that the lists
    $\ii$, $\ss$ and $\bm{\varepsilon}$ are constant.
    In other words,
    \[\kappa_\ell(A^{(N)})
    = \sum_{i,s \geq 1 \atop \epsilon = \pm 1} \kappa_\ell(B^{(N)}_{i,s} B^{(N)}_{i+1,s+\epsilon})
    +O(N^{-1}).\]
    The $2(N-1)^2$ variables $B^{(N)}_{i,s} B^{(N)}_{i+1,s+\epsilon}$
    are Bernoulli variables, whose parameters are given by:
    \begin{itemize}
        \item if $s=i \in [N-1]$ and $\epsilon=1$, then the parameter is 
            $\frac{\theta^2}{(N+\theta-1)(N+\theta-2)}$ ($N-1$ cases);
        \item  if $s=i;\epsilon=-1$ (here $2\leq i \leq N-1$)
            or $s=i+1;\epsilon=-1$ (here $1\leq i \leq N-1$)
            or $s=i+2;\epsilon=-1$ (here $1\leq i \leq N-2$),
            then the parameter is $\frac{\theta}{(N+\theta-1)(N+\theta-2)}$
            ($3N-5$ cases);
        \item otherwise, the parameter is $\frac{1}{(N+\theta-1)(N+\theta-2)}$.
    \end{itemize}
    Recall that the cumulants of a sequence of Bernoulli variables $X^{(N)}$
    with parameters $(p_N)_{N\geq 1}$ with $p_N \to 0$ are asymptotically given
    by $\kappa_\ell(X^{(N)})=p_N+ O (p_N^2)$.
    Hence,
    \[\kappa_\ell(A^N)= 2(N-1)^2 \frac{1}{(N+\theta-1)(N+\theta-2)} + 
    O\big(N^{-1} \big) = 2 + O\big(N^{-1} \big).\]
    Finally, the cumulants of $A^{(N)}$ converges towards those of a Poisson variable
    with parameter $2$, which implies the convergence of $A^{(N)}$ in distribution.
\end{proof}

\subsection{Dashed patterns}\label{SubsectDashedPatterns}
%Let us recall the definition of dashed patterns in a permutation,
%as introduced by E. Babson and E. Steingrímsson 
%\cite{SteingrimssonGeneralizedPatterns}.

In this section, we prove Theorem \ref{ThmDashedPatterns}, which describes,
for any given dashed pattern $(\tau,X)$,
the asymptotic behavior of the sequence $(O_{\tau,X}^{(N)})_{N\geq 1}$
of random variables.

\begin{proof}
    As in the previous examples, we write the quantity we want to study in terms
    of the variables $B^{(N)}_{i,s}$. Here,
    \[ O^{(N)}_{\tau,X} = \sum_{i_1 < \dots < i_p \atop \forall x \in X, i_{x+1}=i_x+1}
    \sum_{s_1,\dots,s_p \atop s_{\tau^{-1}(1)} < \dots < s_{\tau^{-1}(p)} }
    B^{(N)}_{i_1,s_1} \dots B^{(N)}_{i_p,s_p}. \]
    Expanding its cumulants by multilinearity, we get (step 1)
    \begin{equation}\label{EqDevCumOcc}
        \kappa_\ell ( O^{(N)}_{\tau,X} ) = \sum_{(i_j^r)} \sum_{(s_j^r)} \kappa \bigg(
    B^{(N)}_{i^1_1,s^1_1} \dots B^{(N)}_{i^1_p,s^1_p}, \dots,
    B^{(N)}_{i^\ell_1,s^\ell_1} \dots B^{(N)}_{i^\ell_p,s^\ell_p} \bigg).
    \end{equation}
    The first (resp. second) summation index is the set of matrices
    $(i_j^r)$ (resp. $(s_j^r)$) with $(j,r) \in [p] \times [\ell]$ such that:
    \begin{itemize}
        \item $\forall r, \ i^r_1 < \dots < i^r_p$ 
            (resp. $s^r_{\tau^{-1}(1)} < \dots < s^r_{\tau^{-1}(p)}$);
        \item $\forall r, \forall x \in X, \ i^r_{x+1}=i^r_x+1$
            (resp. no extra condition on the $s$'s).
    \end{itemize}
    Given such lists $\ii$ and $\ss$, we consider the four following graphs:
    \begin{itemize}
        \item $H_1$ has vertex set $[p] \times [\ell]$ and has an edge between
            $(j,r)$ and $(k,t)$ if $|i^r_j-i^t_k| \leq 1$ {\em and} $s^r_j = s^t_k$;
        \item $H_2$ has vertex set $[p] \times [\ell]$ and has an edge between
            $(j,r)$ and $(k,t)$ if
            \[ \{i^r_j,i^r_j+1,s^r_j\} \cap \{i^t_k,i^t_k+1,s^t_k\}
            \neq \emptyset.\]
        \item $H$ has vertex set $([p] \times [\ell]) \sqcup ([p] \times [\ell])$
            and has an edge
            between $(j,r)$ and $(k,t)$ (resp. $(j,r)$ and $\overline{(k,t)}$;
            $\overline{(j,r)}$ and $\overline{(k,t)}$)
            if $|i^r_j-i^t_k| \leq 1$ (resp. $s^t_k - i_j^r=0$ or $1$; $s^r_j=s^t_k$).
        \item $H'_2$ has vertex set $[\ell]$ and has an edge between $r$ and $t$ if
            \[ \left(\bigcup_{1\leq j \leq p} \{i_j^r, i^r_j+1, s_j^r\} \right)\cap
             \left( \bigcup_{1\leq k \leq p} \{i^t_k,i^t_k+1,s^t_k\} \right) \neq 
             \emptyset.\]
    \end{itemize}
    The graphs $H_1$ and $H_2$ are respectively the strong and usual quotients
    of $H$, as defined in Section \ref{SectGraph}.
    Therefore, one has, by Lemma \ref{LemGraphFibers2}:
    \[\#(\Conn(H)) \leq \#(\Conn(H_1)) + \#(\Conn(H_2)).\]
    But one can further contract $H_2$ by the map $f:[p] \times [\ell] \to [\ell]$
    defined by $f(j,r)=r$ and we obtain $H'_2$.
    With the notation of Section \ref{SectGraph}, it implies:
    \[\#(\Conn(H_2)) \leq \#(\Conn(H'_2)) + 
    \sum_{r=1}^\ell \left[\#\big(\Conn\big(
    H_2\big[ [p] \times \{r\}\big] \big)\big)-1 \right].\]
    But each induced graph $H_2[ [p] \times \{r\}]$ (for $1 \leq r \leq \ell$)
    contains at least an edge
    between $(x,r)$ and $(x+1,r)$ for each $x \in X$ (because we assumed 
    $i^r_{x+1}=i^r_x+1$).
    Thus it has at most $p-q$ connected components.
    Finally,
    \begin{equation}\label{EqTechConn}
        \#(\Conn(H)) \leq \#(\Conn(H_1)) + \#(\Conn(H'_2)) + (p-q-1)\ell.
    \end{equation}
    
    Let us apply the main lemma (Theorem \ref{ThMain}) to obtain a bound for
         \[ \bigg|\kappa \bigg(
        B^{(N)}_{i^1_1,s^1_1} \dots B^{(N)}_{i^1_p,s^1_p}, \dots,
            B^{(N)}_{i^\ell_1,s^\ell_1} \dots B^{(N)}_{i^\ell_p,s^\ell_p} \bigg) \bigg|.\]
      In this case, the number of different pairs in the indices of the Bernoulli
      variables is at least the number of connected components of $H_1$.
      Besides, the graph $G'_2$ introduced in Section \ref{SubsectMainResult}
      has the same vertex set, but fewer edges than $H'_2$.
      Hence, it has more connected components and we obtain:
               \[ \bigg|\kappa \bigg(
        B^{(N)}_{i^1_1,s^1_1} \dots B^{(N)}_{i^1_p,s^1_p}, \dots,
            B^{(N)}_{i^\ell_1,s^\ell_1} \dots B^{(N)}_{i^\ell_p,s^\ell_p} \bigg) \bigg|
            \leq C_{p \ell} N^{-\#(\Conn(H_1))-\#(\Conn(H'_2))+1}.\]
            Using inequality~\eqref{EqTechConn} above, this can be rewritten as (step 2)
    \[ \bigg|\kappa \bigg(                               
            B^{(N)}_{i^1_1,s^1_1} \dots B^{(N)}_{i^1_p,s^1_p}, \dots, 
                        B^{(N)}_{i^\ell_1,s^\ell_1} \dots B^{(N)}_{i^\ell_p,s^\ell_p} \bigg)\bigg|
                        \leq C_{p \ell} N^{-\#(\Conn(H))+ (p-q-1)\ell+1}.\]
    As in the previous section, Lemma \ref{LemNumberListsGen}
    asserts that the number of pairs of lists $((i^r_j),(s_j^r))$
    such that $\#(\Conn(H))=t$ is smaller than $C''_{2 p \ell,\DD} N^t$
    for a well chosen $\DD$ (step 3).
    Hence their total contribution to Equation~\eqref{EqDevCumOcc} is bounded from above by
    $C_{p \ell} C''_{p \ell,\DD} N^{(p-q-1)\ell+1}$.
    Finally, one has:
    \begin{equation}\label{EqBoundCumOcc}
        \kappa_\ell(O^{(N)}_{(X,\tau)})=O(N^{(p-q-1)\ell+1}),
    \end{equation}
    or equivalently $\kappa_\ell(Z^{(N)}_{(X,\tau)}) = O(N^{-\ell/2+1})$.
    As in Section \ref{SubsectConv}, the theorem follows from this bound and
    from the limits of the normalized expectation and variance.

    For the expectation, we have to consider the case $\ell=1$.
    In this case, one has $\#(\Conn(H_1))=p$ and $\#(\Conn(H'_2))=1$.
    Therefore, if we want an equality in Equation~\eqref{EqTechConn},
    we need $\#(\Conn(H))=2p-q$, which implies that all
    entries in the lists $\ii$ and $\ss$ are distinct.
    For these lists, one has (Lemma \ref{LemDistProdB})
    \[\kappa ( B^{(N)}_{i^1_1,s^1_1} \dots B^{(N)}_{i^1_p,s^1_p})=
    \esper ( B^{(N)}_{i^1_1,s^1_1} \dots B^{(N)}_{i^1_p,s^1_p})=
    \frac{1}{(N+\theta-1)_p}.\]
    But the number of lists with distinct entries in the index set of
    equation \eqref{EqDevCumOcc} is asymptotically $\frac{N^{2p-q}}{p!(p-q)!}$.
    Finally,
    \[ \lim_{N \to \infty} \frac{1}{N^{p-q}} \esper(O^{(N)}_{(X,\tau)})
    =\frac{1}{p!(p-q)!}.\]

    It remains to prove that the renormalized variance 
    $N^{-2(p-q)+1} \kappa_2(O^{(N)}_{(X,\tau)})$ has a limit $V_{\tau,X} \geq 0$,
    when $N$ tends to infinity.
    But this follows from the bound~\eqref{EqBoundCumOcc} and the fact that
    any $\kappa_\ell(O^{(N)}_{(X,\tau)})$
    is a rational function in $N$.
    Let us explain the latter fact.

    Recall that $\kappa_\ell(O^{(N)}_{(X,\tau)})$ is given by equation~\eqref{EqDevCumOcc}.
    We can split the sum depending on the graph $H$ associated to the matrices $\ii$ and $\ss$
    and on the actual value $\delta_e(\ii,\ss)$ of $i_j^r-i_k^t$ 
    (or $s^t_k - i_j^r$ and $s^t_k - s_j^r$ respectively) for each edge $e$ of $H$.
    Then the fact that $\kappa_\ell(O^{(N)}_{(X,\tau)})$ is a rational function
    is an immediate consequence of the following points:
    \begin{itemize}
    \item the numbers of graphs $H$ and of possible values
        for the differences $\delta_e(\ii,\ss)$ 
     (for $e \in E_{H}$) are finite;
     \item the cumulant $\kappa \big(
        B^{(N)}_{i^1_1,s^1_1} \dots B^{(N)}_{i^1_p,s^1_p}, \dots,
            B^{(N)}_{i^\ell_1,s^\ell_1} \dots B^{(N)}_{i^\ell_p,s^\ell_p} \big)$
            is a rational function in $N$ which depends only on the graph $H$
             and values of $\delta_e(\ii,\ss)$ (for $e \in E_{H}$);
     \item the number of matrices $\ii$ and $\ss$ corresponding to a given graph $G$ and given values
     $\delta_e(\ii,\ss)$ is a polynomial in $N$. \qedhere
     \end{itemize}        
\end{proof}

%\begin{remark}\label{RemInd}
%    Fix a dashed pattern $(\tau,X)$ and consider a {\em uniform}
%    random permutation $\sigma$.
%    Let us denote $A_{i_1,\dots,i_p}$ the event : $i_1,\dots,i_p$ is 
%    an occurrence of $(\tau,X)$ in $\sigma$.
%    Then the events $A_{i_1,\dots,i_p}$ and $A_{j_1,\dots,j_p}$ are independent
%    as soon as the lists $\ii$ and $\jj$ are disjoint.
%    This allows to build a simpler proof of the result above, still using
%    the moment method (see \cite{FlajoletValleePatterns} for a proof
%    of a similar result in the context of random words).
%    However, it does not hold in the case of Ewens random permutations
%    with a general parameter $\theta$.
%\end{remark}

\subsection{Conclusion: local statistics}\label{SubsectLocalStat}
Recently, several authors have further generalized the notion of dashed patterns
into the notion of {\em bivincular} patterns 
\cite[Section 2]{BousquetEtAlGeneralizedPattern}.
The idea is roughly that, in an occurrence of a bivincular pattern,
one can ask that some values are consecutive (and not only some places
as in dashed patterns).
This new notion is very natural as occurrences of bivincular patterns in the inverse
of a permutation correspond to occurrences of bivincular patterns in the
permutation itself (which is not true for dashed patterns).

It would be interesting to give a general theorem
on the asymptotic behavior of the number of occurrences
of a given bivincular pattern.
This seems to be a hard problem as many different behavior can occur:
\begin{itemize}
    \item The number of adjacencies is the sum of the number of occurrences
        of two different bivincular patterns and converge towards a Poisson
        distribution.
    \item The dashed patterns are special cases of bivincular patterns.
        As we have seen in the previous section, their number of occurrences
        converges, after normalization, towards a Gaussian law
        (at least for patterns of size smaller than $9$,
        the general case relies on Conjecture~\ref{ConjVariance}).
        Other bivincular patterns exhibit the same behavior, for example
        the one considered in \cite{BousquetEtAlGeneralizedPattern}.
    \item Other behaviors can occur: for example,
        it is easy to see that the number of occurrences of
        the pattern $(123,\{1\},\{1\})$ (we use  
        the notations of \cite{BousquetEtAlGeneralizedPattern}),
        has an expectation of order $n$, but a
        probability of being $0$ with a positive lower bound.
\end{itemize}
Unfortunately, we have not been able to give a general statement.
Let us however emphasize the fact that
our approach unifies the first two cases.

More generally, our approach seems suitable to study 
what could be called a {\em local statistic}.
Fix a integer $p\geq 1$ and a set $S$ of constraints:
a constraint is an equality or inequality (large or strict) whose members
are of the form $i_{j}+d$ or $s_j+d$ where $j$ belongs to $[p]$ and $d$ is some integer.
Then, for a permutation $\sigma$ of $S_N$, we define $O^{(N)}_{p,S}(\sigma)$ as the 
number of lists $i_1,\dots,i_p$ and $s_1,\dots,s_p$ satisfying the constraints
in $S$ and such that $\sigma(i_j)=s_j$ for all $j$ in $[p]$.
For instance, the number of $d$-descents
studied in \cite{BonaNormalityDDescents}
is a local statistic.

We call any linear combination of statistics $O^{(N)}_{p,S}$ a local statistic.
The number of occurrences of a bivincular patterns, but also the number of
exceedances or of cycles of a given length $p$, are examples of local statistics.
The method presented in this article is suitable for the asymptotic study of
joint vectors of local statistics.
We have failed to find a general statement, but we are convinced that our approach
can be adapted to many more examples than the ones studied in this article.

However, the method does not seem appropriate to {\em global} statistics, such as
the total number of cycles of the permutation or the length of the longest cycle.

%%%%%%%%%%%%%%%%%%%%%%%%%%%%%%%%%%%%%%%%%%%%%%%%%%%%%%%%%%%%%%%%%%%
%%                                                               %%
%% You may add acknowledgments (optional).                       %%
%%                                                               %%
%%%%%%%%%%%%%%%%%%%%%%%%%%%%%%%%%%%%%%%%%%%%%%%%%%%%%%%%%%%%%%%%%%%

\ACKNO{
The author would like to thank Jean-Christophe Aval, Adrien Boussicault and
Philippe Nadeau for sharing their interest in permutation tableaux,
Mireille Bousquet-Mélou and Mathilde Bouvel for pointing out some references
and, last but not least, Jean-François Marckert for a lot of explanations
and references on convergence of random functions and Gaussian processes
(in particular, his notes from lectures given in Graz, available online,
have been very useful for me).

Besides, the author wants also to thank some anonymous referees
for a lot of lingual, mathematical and organizational 
corrections or suggestions
that improved the presentation of the paper.
}

%%%%%%%%%%%%%%%%%%%%%%%%%%%%%%%%%%%%%%%%%%%%%%%%%%%%%%%%%%%%%%%%%%%
%%                                                               %%
%% You have reached the end of your document.                    %%
%%                                                               %%
%%%%%%%%%%%%%%%%%%%%%%%%%%%%%%%%%%%%%%%%%%%%%%%%%%%%%%%%%%%%%%%%%%%

\end{document}